\theoremstyle{plain}
\newtheorem{theorem}{Theorem}[section]
\newtheorem{lemma}[theorem]{Lemma}
\newtheorem{corollary}[theorem]{Corollary}
\newtheorem{proposition}[theorem]{Proposition}
\theoremstyle{definition}
\newtheorem{definition}[theorem]{Definition}
\newtheorem{example}[theorem]{Example}
\theoremstyle{remark}
\newtheorem{remark}[theorem]{Remark}
\numberwithin{equation}{section}
\DeclareMathOperator{\id}{id}
\DeclareMathOperator{\ter}{ter}
\def\R{\mathbb R}
\def\Z{\mathbb Z}
\def\D{\mathcal D}
\def\V{\mathcal V}
\def\E{\mathcal E}
\def\I{\mathcal I}
\def\vp{\varphi}
\begin{document}

\title[Connectedness of Sierpi\'nski sponges and associated graph-directed systems]{The connectedness of Sierpi\'nski sponges with rotational and reflectional components and associated graph-directed systems}

\author{Huo-Jun Ruan}
\address{School of Mathematical Sciences, Zhejiang University, Hangzhou, China}
\email{ruanhj@zju.edu.cn}

\author{Jian-Ci Xiao}
\address{School of mathematics, Nanjing University of Aeronautics and Astronautics, Nanjing, China}
\email{jcxiao@nuaa.edu.cn}

\subjclass[2010]{Primary 28A80; Secondary 54A05}

\keywords{graph-directed attractor, intersection, sponge-like set, connectedness.}

\thanks{Corresponding author: Jian-Ci Xiao}

\begin{abstract}
    We provide two methods to characterize the connectedness of all $d$-dimensional generalized Sierpi\'nski sponges whose corresponding IFSs are allowed to have rotational and reflectional components. Our approach is to reduce it to an intersection problem between the coordinates of graph-directed attractors. More precisely, let $(K_1,\ldots,K_n)$ be a Cantor-type graph-directed attractor in $\R^d$. By creating an auxiliary graph, we provide an effective criterion for whether $K_i\cap K_j$ is empty for every pair of $1\leq i,j\leq n$. Moreover, the emptiness can be checked by examining only a finite number of geometric approximations of the attractor. The approach is also applicable to more general graph-directed systems.
\end{abstract}

\maketitle

\section{Introduction}

Connectedness is a fundamental aspect of the topology of fractal sets. However, to tell whether a given fractal is connected or not is usually a nontrivial question. Existing results focus on special classes of self-similar or self-affine sets, see~\cite{DLRWX21,DL11,SS06,Sol15,SS20}. In a classical paper~\cite{Hat85}, Hata elegantly transformed the connectedness problem of attractors of IFSs into a connectedness problem of graphs.

\begin{theorem}[Hata~\cite{Hat85}]
    Let $\Phi=\{\varphi_i\}_{i=1}^n$ be an IFS on $\R^d$ and let $K$ be its attractor. Then $K$ is connected if and only if the associated Hata graph is connected, where the \emph{Hata graph} is defined as follows.
    \begin{enumerate}
        \item The vertex set is the index set $\{1,\ldots,n\}$;
        \item There is an edge joining distinct $1\leq i,j\leq n$ if and only if $\varphi_i(K)\cap\varphi_j(K)\neq\varnothing$.
    \end{enumerate}
\end{theorem}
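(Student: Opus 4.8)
The plan is to prove both implications, disposing of necessity quickly and reserving the real work for sufficiency.

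For \textbf{necessity}, suppose the Hata graph is disconnected, so its vertex set splits as $\{1,\dots,n\}=A\sqcup B$ with $A,B$ nonempty and no edge between them. I would set $K_A=\bigcup_{i\in A}\varphi_i(K)$ and $K_B=\bigcup_{j\in B}\varphi_j(K)$. By the invariance $K=\bigcup_i\varphi_i(K)$ we have $K=K_A\cup K_B$; both are nonempty and compact (finite unions of compact sets), and the absence of cross-edges means $\varphi_i(K)\cap\varphi_j(K)=\varnothing$ for all $i\in A,\,j\in B$, hence $K_A\cap K_B=\varnothing$. This is a separation of $K$, contradicting connectedness.

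For \textbf{sufficiency} the idea is to propagate the first-level intersection data encoded in the Hata graph to every level. For a word $\mathbf w=w_1\cdots w_k$ over $\{1,\dots,n\}$ write $\varphi_{\mathbf w}=\varphi_{w_1}\circ\cdots\circ\varphi_{w_k}$ and $K_{\mathbf w}=\varphi_{\mathbf w}(K)$, so that $K=\bigcup_{|\mathbf w|=k}K_{\mathbf w}$. Let $G_k$ be the graph on length-$k$ words in which $\mathbf u,\mathbf v$ are joined when $K_{\mathbf u}\cap K_{\mathbf v}\neq\varnothing$; note $G_1$ is precisely the Hata graph. I would show by induction on $k$ that each $G_k$ is connected. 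The inductive step has two mechanisms: first, since each $\varphi_i$ is a homeomorphism, applying it to a connecting chain in $G_{k-1}$ shows that all length-$k$ words sharing a fixed first letter $i$ lie in a single connected ``block'' of $G_k$; second, for letters $i,i'$ adjacent in the Hata graph, expanding $\varphi_i(K)\cap\varphi_{i'}(K)\neq\varnothing$ as a union over length-$(k-1)$ suffixes produces suffixes $\mathbf a,\mathbf b$ with $K_{i\mathbf a}\cap K_{i'\mathbf b}\neq\varnothing$, i.e.\ an edge of $G_k$ bridging the two blocks. Walking along a path in the Hata graph glues all blocks together, giving connectedness of $G_k$.

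Finally I would transfer connectedness of the combinatorial graphs $G_k$ to topological connectedness of $K$ by an $\varepsilon$-chain argument. With $r=\max_i\mathrm{Lip}(\varphi_i)<1$ one has $\diam K_{\mathbf w}\le r^k\diam K$, so given $\varepsilon>0$ I can choose $k$ with $\diam K_{\mathbf w}<\varepsilon$ for every length-$k$ word. Any $x,y\in K$ lie in pieces $K_{\mathbf u},K_{\mathbf v}$, which a connected $G_k$ joins by a chain $\mathbf u=\mathbf w_0,\dots,\mathbf w_m=\mathbf v$; choosing $p_\ell\in K_{\mathbf w_{\ell-1}}\cap K_{\mathbf w_\ell}$ makes $x,p_1,\dots,p_m,y$ an $\varepsilon$-chain, since consecutive points share a piece of diameter $<\varepsilon$. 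A compact metric space admitting $\varepsilon$-chains between all pairs for every $\varepsilon>0$ is connected, because any separation into disjoint nonempty compact sets would have $\dist>0$, a gap no sufficiently fine chain could cross. Hence $K$ is connected. I expect the induction establishing connectedness of $G_k$ to be the main obstacle: the bookkeeping of intra-block connectivity via $\varphi_i$ versus inter-block edges harvested from Hata adjacencies must be set up carefully, whereas the two concluding topological facts are routine but need to be invoked precisely.
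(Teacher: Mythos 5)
The paper does not prove this statement at all: it is quoted as Hata's theorem and attributed to \cite{Hat85}, so there is no in-paper argument to compare yours against. Judged on its own, your proof is correct and follows the standard route. The necessity direction is fine: a partition $A\sqcup B$ of the vertices with no cross-edges yields two nonempty compact sets $K_A,K_B$ with $K_A\cup K_B=K$ and $K_A\cap K_B=\varnothing$, which is a separation. The sufficiency induction is also sound: an edge of $G_{k-1}$ between $\mathbf a$ and $\mathbf b$ pushes forward to an edge of $G_k$ between $i\mathbf a$ and $i\mathbf b$ because $\varphi_i(K_{\mathbf a})\cap\varphi_i(K_{\mathbf b})\supset\varphi_i(K_{\mathbf a}\cap K_{\mathbf b})\neq\varnothing$ (injectivity is not even needed here), so each first-letter block is connected in $G_k$; and for Hata-adjacent $i,i'$, decomposing $\varphi_i(K)\cap\varphi_{i'}(K)\neq\varnothing$ over length-$(k-1)$ suffixes produces a bridging edge, so connectedness of the Hata graph glues the blocks. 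The concluding step is the one place to state the hypotheses precisely: you need $\diam K_{\mathbf w}\le r^k\diam K$ with $r=\max_i\mathrm{Lip}(\varphi_i)<1$, which holds because an IFS consists of contractions, and then the standard fact that a compact metric space which is $\varepsilon$-chain connected for every $\varepsilon>0$ is connected (any separation into two nonempty compact pieces has positive distance, which a sufficiently fine chain cannot jump). All of this is correct; the only caveat is that several steps are sketched rather than written out, but none of them hides a gap.
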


As simple as it seems, the examination of whether $\varphi_i(K)\cap\varphi_j(K)\neq\varnothing$ is a hard task in many circumstances, even when $K$ is self-similar. Since all the ingredients in hand are the IFSs, we usually have to choose a suitable compact invariant set (with respect to the IFS), iterate it several times, and hope to obtain useful information on the connectedness of the limit set. In this paper, we shall focus on ``sponge-like'' self-similar sets defined as follows. Denote $\mathcal{O}_d$ to be the group of symmetries of the $d$-cube $[0,1]^d$. That is to say, $\mathcal{O}_d$ consists of all isometries that map $[0,1]^d$ onto itself, including both orientation preserving and orientation reversing ones. Let $N\geq 2$ be an integer and let $\I\subset\{0,1,\ldots,N-1\}^d$ be a non-empty set with $1<|\I|<N^d$, where $|\I|$ denotes the cardinality of $\I$. Suppose for each $i\in\I$ there corresponds a contracting map
\[
    \varphi_i(x) = \frac{1}{N}(O_i(x)+i), \quad x\in\R^d,
\]
where $O_i\in\mathcal{O}_d$. A classical result of Hutchinson~\cite{Hut81} states that there is a unique non-empty compact set $F=F(d,N,\I)$ such that $F=\bigcup_{i\in\I}\vp_i(F)$. When $O_i=\id$ for all $i\in\I$, the attractor $F$ is usually called a \emph{Sierpi\'nski sponge} when $d\geq 3$ and a \emph{generalized Sierpi\'nski carpet} or a \emph{fractal square} when $d=2$. For convenience, we shall name the attractor $F$ (in our setting) a \emph{sponge-like set} throughout this paper. These sets can be obtained by the following geometric iteration process: Writing $F_0:=[0,1]^d$ and recursively defining 
\begin{equation}\label{eq:notationoffk}
    F_k:=\bigcup_{i\in\I} \vp_i(F_{k-1}), \quad k\geq 1,
\end{equation}
then $F=\bigcap_{k=0}^\infty F_k$. Note that $F_k$ is a finite union of closed cubes of side length $N^{-k}$.


\begin{example}\label{exa:carpet1}
    Let $d=2$, $N=2$ and let $\I=\{(0,0),(1,0),(0,1)\}$. In Figure~\ref{fig:carpet1}, we show attractors associated with the following three IFSs from left to right:
    \begin{enumerate}
        \item $O_i\equiv\id$ for $i\in\I$;
        \item $O_{(0,0)}=\id$, while $O_{(0,1)}, O_{(1,0)}$ are rotations of $90^\circ$ and $270^\circ$ (clockwise), respectively;
        \item $O_{(0,1)}$ is the rotation of $90^\circ$ (counterclockwise), while $O_{(0,0)},O_{(1,0)}$ are flips along the lines $x=\frac{1}{2}$ and $y=\frac{1}{2}$, respectively.
    \end{enumerate}
    Interested readers can find an illustration of all possible attractors in the case when $d=2$, $N=2$ and $|\I|=3$ in the book~\cite[Section 5.3]{PJS04}.
\end{example}

\begin{figure}[htbp]
    \centering
    \includegraphics[width=3.3cm]{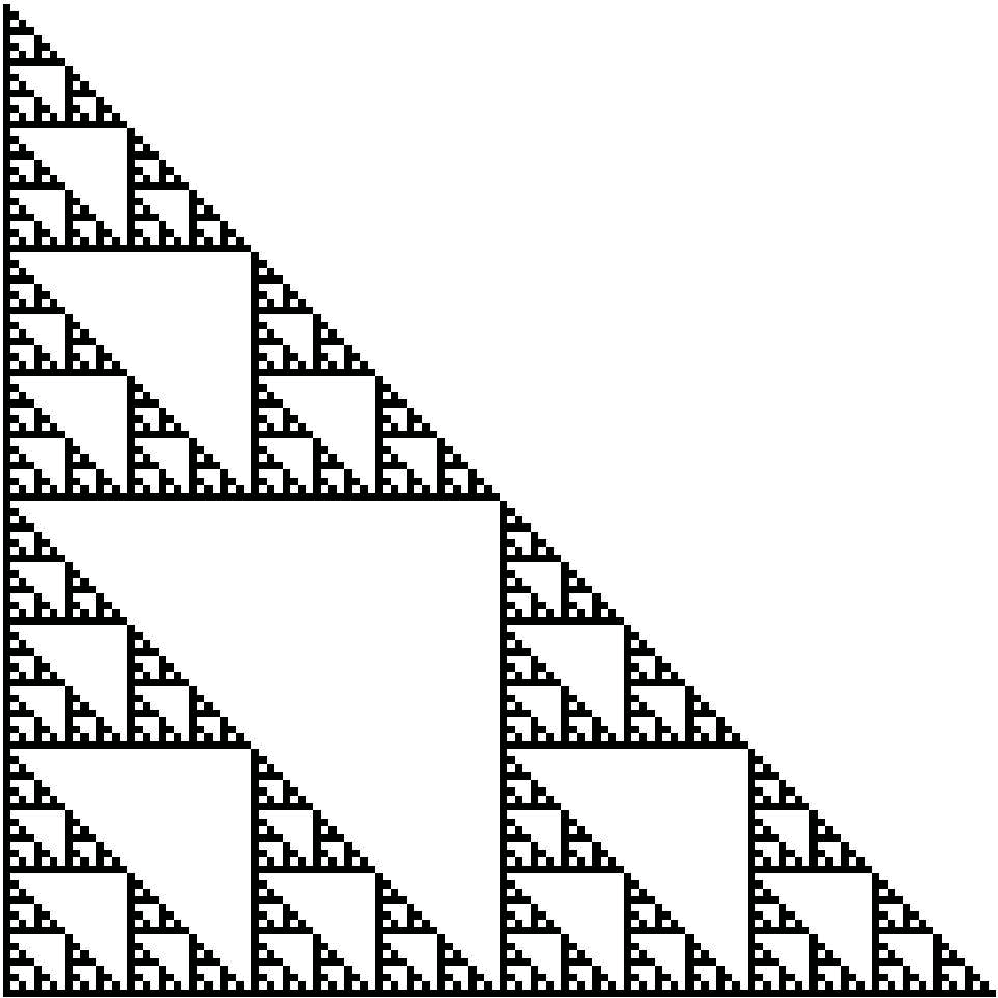}\quad\,
    \includegraphics[width=3.3cm]{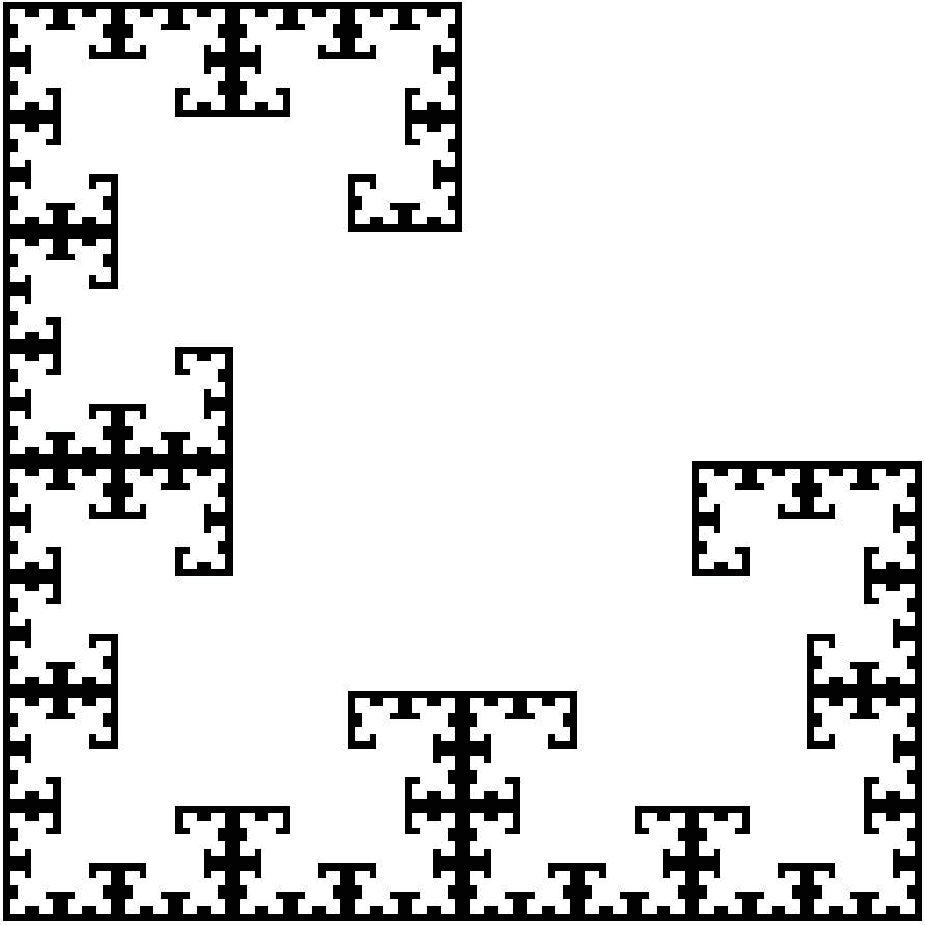}\quad\,
    \includegraphics[width=3.3cm]{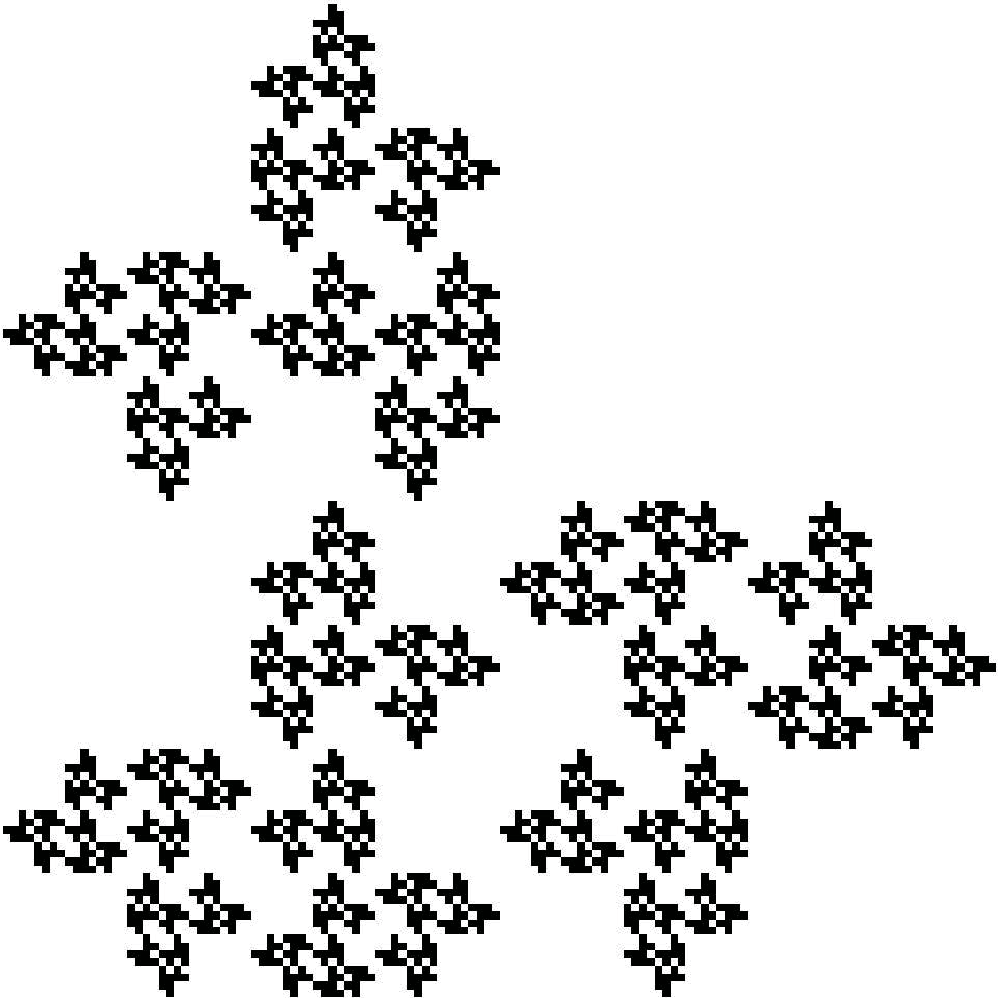}
    \caption{Three sponge-like sets in $\R^2$}
    \label{fig:carpet1}
\end{figure}

Over the last decade, sponge-like sets have been studied intensively, especially in cases where all of the contracting maps in the IFS are orientation preserving. Unfortunately, even for this specific class of self-similar sets, many basic topological properties are still far from clear to us. Criteria for generalized Sierpi\'nski carpets that are totally disconnected or have only finitely many connected components are given in~\cite{LLR13} and~\cite{Xiao21}, respectively. Partial results on the quasisymmetric and Lipschitz classification of Sierpi\'nski sponges can be found in~\cite{BM13,BM20,RW17,Xi20} and references therein. In a recent paper~\cite{DLRWX21} joint with Dai, Luo and Wang, the authors provided characterizations of connected Sierpi\'nski carpets with local cut points.

For general cases allowing rotations and reflections, the topology becomes involved and there are few existing studies. For example, it is simple to observe that there are $|\mathcal{O}_d|^{|\I|}$ different IFSs provided $\I$ is fixed, but the enumeration of distinct attractors is difficult since many of them coincide. This problem was solved in~\cite{FO07} by Falconer and O'Conner using group theory. In the case when $d=2$, Fraser~\cite{Fra12} investigated the self-affine generalization of our sponge-like sets and calculated their packing and box dimensions under some open set type conditions.

In this paper, we are able to determine the connectedness of any given sponge-like set $F$. By Hata's criterion, it suffices to check whether $\varphi_i(F)\cap\varphi_j(F)=\varnothing$ for all $i,j\in\mathcal{I}$. Note that $\varphi_i(F)$ (resp. $\varphi_j(F)$) is just a scaled copy of $O_i(F)$ (resp. $O_j(F)$). Our key idea is to regard $(O(F))_{O\in\mathcal{O}(d)}$ as the attractor of some special graph-directed system (see Lemma~\ref{lem:cantortypeatt}) and to determine whether its coordinates intersect with each other. Let us pause here to give a brief introduction to graph-directed sets and related work.

Graph-directed sets can be regarded as a generalization of self-similar sets, and the standard process of generating them is as follows. Let $G=G(\V,\E)$ be a directed graph where $\V=\{1,2,\ldots,n\}$ is the vertex set and $\E$ is the edge set. Assume further that for each vertex in $\V$, there is at least one edge starting from it. A graph-directed IFS (iterated function system) in $\R^d$ is a finite family $\Phi=\{\vp_e: e\in\E\}$ consisting of contracting similarities from $\R^d$ to $\R^d$ indexed by edges in $\E$. It is well known (see~\cite{Fal97} or~\cite{MW88}) that there is a unique $n$-tuple of non-empty compact sets $(K_1,\ldots,K_n)$ such that
\[
    K_i = \bigcup_{j=1}^n\bigcup_{e\in\E_{i,j}} \vp_e(K_j),
\]
where $\E_{i,j}$ denotes the collection of edges in $\E$ from $i$ to $j$. Such a tuple is usually called the \emph{graph-directed attractor} associated with the graph-directed system $(G,\Phi)$, and the corresponding \emph{graph-directed set} often refers to the union $\bigcup_{i=1}^n K_i$.

In~\cite{MW88}, Mauldin and Williams calculated the Hausdorff dimension of graph-directed sets under an open set type condition. It is also shown that the corresponding Hausdorff measure of the set is positive and $\sigma$-finite. Later, Das and Ngai~\cite{DN04} developed algorithms to compute the dimension under some weak separation condition (called the finite type condition). Xiong and Xi~\cite{XX09} studied the Lipschitz classification of dust-like graph-directed sets. For further work, see~\cite{EG99,EM92,Far19,Ols94,Ols11,Wang97,WX10} and references therein.

We may always assume safely that for each $\E_{i,j}$, $\vp_e\neq\vp_{e'}$ whenever $e,e'\in\E_{i,j}$ are distinct. 
We also remark that for every vertex $v$ of every graph that appears in this paper, one is able to travel from $v$ directly to itself if and only if there is a loop at $v$.

Let $(K_1,\ldots,K_n)$ be a graph-directed attractor. Our task is to determine whether $K_i$ intersects $K_j$ for all $1\leq i,j\leq n$. In this paper, we solve this intersection problem for a special class of graph-directed sets, which is enough to settle the connectedness of the aforementioned sponge-like sets. Although our approach remains valid in a more general setting (see Section 4 for details), the treatment of the following class, namely Cantor-type graph-directed attractors, would be adequate to demonstrate the idea.

\begin{definition}\label{de:gds}
    Let $(K_1,\ldots,K_n)$ be a graph-directed attractor in $\R^d$ associated with $G=G(\V,\E)$ and $\Phi=\{\vp_e: e\in\E\}$. We say that the attractor $(K_1,\ldots,K_n)$ or the IFS $\Phi$ is of \emph{Cantor-type} if for each $e\in\E$, $\vp_e$ is a contracting similarity on $\R^d$ of the form $\vp_e(x)=N^{-1}(x+t_e)$, where $N\geq 2$ is an integer and $t_e\in\{0,1,\ldots,N-1\}^d$.
\end{definition}

Let $(K_1,\ldots,K_n)$ be a Cantor-type graph-directed attractor in $\R^d$. The following geometric iteration process to obtain the attractor is standard, and we present here a sketch of proof just for completeness.

\begin{lemma}\label{lem:approximation}
    Let $Q_{i,0}:=[0,1]^d$ for $1\leq i\leq n$ and recursively define
    \[
        Q_{i,k} := \bigcup_{j=1}^n\bigcup_{e\in\E_{i,j}} \vp_e(Q_{j,k-1}), \quad k\geq 1, 1\leq i\leq n.
    \]
    Then for each $i$, $\{Q_{i,k}\}_{k=1}^\infty$ forms a decreasing sequence of compact sets and $K_i=\bigcap_{k=1}^\infty Q_{i,k}$.
\end{lemma}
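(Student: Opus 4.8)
The plan is to establish three things in sequence: that each $Q_{i,k}$ is compact, that the sequences are nested decreasing, and finally that the intersection equals $K_i$. Let me think about how these fit together.The plan is to verify three properties in sequence: compactness of each $Q_{i,k}$, monotonicity of the sequences in $k$, and the identification of the limiting intersection with $K_i$. For compactness I would argue by induction on $k$. The base case is immediate since $Q_{i,0}=[0,1]^d$. For the inductive step, each $\vp_e(Q_{j,k-1})$ is the continuous image of a compact set and hence compact, while $Q_{i,k}$ is a finite union of such sets and therefore compact. Nonemptiness is inherited from the standing assumption that every vertex has at least one outgoing edge, so that some $\E_{i,j}\neq\emptyset$ and $Q_{i,k}\supseteq\vp_e(Q_{j,k-1})\neq\emptyset$.

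The decreasing property is where the Cantor type structure is used. First I would record that the operator $(A_1,\dots,A_n)\mapsto\bigcup_{j}\bigcup_{e\in\E_{i,j}}\vp_e(A_j)$ is monotone with respect to coordinatewise inclusion: if $A_j\subseteq B_j$ for every $j$, then the $i$-th output for $A$ is contained in that for $B$. Consequently, once the base containment $Q_{i,1}\subseteq Q_{i,0}=[0,1]^d$ is established for all $i$, the inclusions $Q_{i,k+1}\subseteq Q_{i,k}$ follow by induction. To prove the base case I would use the explicit form $\vp_e(x)=N^{-1}(x+t_e)$ with $t_e\in\{0,1,\dots,N-1\}^d$: for $x\in[0,1]^d$ each coordinate of $x+t_e$ lies in $[0,N]$, so $\vp_e([0,1]^d)\subseteq[0,1]^d$, and taking the finite union over the relevant edges gives $Q_{i,1}\subseteq[0,1]^d$.

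For the final identification, set $L_i:=\bigcap_{k\ge1}Q_{i,k}$; as a nested intersection of nonempty compacta, each $L_i$ is nonempty and compact. I would then show that $(L_1,\dots,L_n)$ satisfies the same defining equation as the attractor and conclude via the uniqueness result of Mauldin--Williams. The key manipulation is to commute the intersection over $k$ past the finite union and past each similarity: since each $\vp_e$ is injective one has $\vp_e(\bigcap_k Q_{j,k-1})=\bigcap_k\vp_e(Q_{j,k-1})$, and since the sequences $\{\vp_e(Q_{j,k-1})\}_k$ are decreasing and the union ranges over the finite edge set one has $\bigcap_k\bigcup_{j,e}\vp_e(Q_{j,k-1})=\bigcup_{j,e}\bigcap_k\vp_e(Q_{j,k-1})$. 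Combining these with $\bigcap_{k\ge1}Q_{j,k-1}=L_j$ yields $L_i=\bigcup_{j}\bigcup_{e\in\E_{i,j}}\vp_e(L_j)$, as required.

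The only genuinely delicate point is the interchange of the infinite intersection with the finite union, which I expect to be the main obstacle, since it fails for arbitrary sequences and relies precisely on the monotonicity from the second step together with the finiteness of $\E$. The elementary verification is that if a point lies in $\bigcup_{j,e}\vp_e(Q_{j,k-1})$ for every $k$ yet in the limit of none of the finitely many decreasing pieces, then choosing $k$ large enough to exit all of them simultaneously produces a contradiction.
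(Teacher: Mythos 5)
Your proposal is correct and follows essentially the same route as the paper: establish $\vp_e([0,1]^d)\subseteq[0,1]^d$ from the Cantor type form to get the decreasing property by induction, note compactness of the finite unions, and then commute the nested intersection past the finite union and the injective similarities to see that $(\bigcap_k Q_{1,k},\ldots,\bigcap_k Q_{n,k})$ satisfies the defining equation, concluding by uniqueness of the attractor. You simply spell out the interchange-of-limits step that the paper's sketch leaves implicit.
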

We name $Q_{i,k}$ the \emph{level-$k$ approximation} of $K_i$.
\begin{proof}
    It is easy to see that
    \[
        \vp_e(Q_{i,0}) = \vp_e([0,1]^d) \subset [0,1]^d = Q_{i,0}, \quad \forall e\in\E, 1\leq i\leq n,
    \]
    from which one can show by induction that $\{Q_{i,k}\}_{k=1}^\infty$ is decreasing. Note that $Q_{i,k}$ is a finite union of closed cubes and hence is compact. Finally, it follows from
    \[
        \bigcap_{k=1}^\infty Q_{i,k} = \bigcap_{k=1}^\infty \Big( \bigcup_{j=1}^n\bigcup_{e\in\E_{i,j}} \vp_e(Q_{j,k-1}) \Big) = \bigcup_{j=1}^n\bigcup_{e\in\E_{i,j}}  \vp_e\Big( \bigcap_{k=1}^\infty Q_{j,k-1} \Big)
    \]
    and the uniqueness of the graph-directed attractor that $K_i=\bigcap_{k=1}^\infty Q_{i,k}$. Note that the last equality above follows from the monotonicity of $\{Q_{i,k}\}_k$.
\end{proof}  

For convenience and clarity, we will write $\V_n:=\{1,\ldots,n\}$ throughout this paper. The desired solution to the aforementioned problem is achieved utilizing a readily created graph called the \emph{intersection graph}. This graph has a vertex set consisting of pairs $(i,j)$ which are joined (by edges or dashed edges) roughly according to whether the images of the unit cube have common faces, see Definition~\ref{de:intersectgraph}.

Our criterion is:

\begin{theorem}\label{thm:main1}
    Let $(K_1,\ldots,K_n)$ be a Cantor-type graph-directed attractor in $\R^d$. For every pair of distinct $i,j\in\V_n$, $K_i\cap K_j\neq\varnothing$ if and only if there exists, in the associated intersection graph, either a terminated finite walk or an infinite solid walk that starts from $(i,j)$.
\end{theorem}

We remark that throughout this paper, a walk always allows repeated vertices and edges. It is also worthwhile considering the intersection problem from a different perspective: \emph{Is there a constant $C$ such that for every pair of $i\neq j$, $K_i\cap K_j\neq\varnothing$ whenever $Q_{i,C}\cap Q_{j,C}\neq\varnothing$?} The answer is actually affirmative. Write

\begin{equation}\label{eq:constant}
    c(n,d):=(d-1)n^2+\frac{n^2+n}{2}+d-1, \quad d\geq 1.
\end{equation}
By analyzing walks in the associated intersection graph, we have the following result.

\begin{theorem}\label{thm:main2}
   Let $(K_1,\ldots,K_n)$ be a Cantor-type graph-directed attractor in $\R^d$. Assume that $i,j\in\V_n$ are distinct. Then $K_i\cap K_j\neq\varnothing$ if and only if $Q_{i,c(n,d)} \cap Q_{j,c(n,d)}\neq\varnothing$.
\end{theorem}

With the aid of Theorem~\ref{thm:main1}, we can draw the Hata graph associated with any given sponge-like set and thus determine whether it is connected; see Section 3 for details. As a corollary of Theorem~\ref{thm:main2}, we also show that it suffices to check only a finite number (independent of $N$) of its geometric approximations.

\begin{theorem}\label{thm:main3}
    Let $F=F(d,N,\I)$ be a sponge-like set in $\R^d$. Then $F$ is connected if and only if $F_{C(d)}$ is connected, where $C(d):=c(d!2^{d}+2,d)$ and $c(\cdot,\cdot)$ is as in~\eqref{eq:constant}.
\end{theorem}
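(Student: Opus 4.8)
The plan is to reduce the connectedness of the sponge-like set $F=F(d,N,\I)$ to the intersecting problem already solved in Theorems~\ref{thm:main1} and~\ref{thm:main2}, by constructing an auxiliary Cantor type graph-directed attractor whose components encode both $F$ and the relevant pieces of its neighbors. The key observation is that Hata's theorem says $F$ is connected if and only if its Hata graph is connected, and the edges of the Hata graph record exactly when $\vp_i(F)\cap\vp_j(F)\neq\varnothing$ for distinct $i,j\in\I$. Since each $\vp_i(F)$ is a scaled copy of $F$, deciding these intersections is equivalent to deciding whether $F$ meets a translate (and symmetry-transform) of itself along a cube face. So first I would set up a graph-directed system whose vertices correspond to $F$ together with all the face-adjacent positioned copies that can arise, and verify that this system is of Cantor type so that Theorems~\ref{thm:main1} and~\ref{thm:main2} apply.

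The crucial counting step is to pin down the number of vertices $n$ of this auxiliary system, which is what produces the value $d!2^d+2$ fed into $c(\cdot,\cdot)$. Here $|\mathcal{O}_d|=d!2^d$ is the order of the symmetry group of the $d$-cube, and the ``$+2$'' should account for the two principal objects being compared (for instance $F$ itself and a shifted copy, or an empty/full sentinel vertex). I would make this precise by describing each vertex as a pair consisting of a symmetry class in $\mathcal{O}_d$ and a positional offset, argue that only finitely many such configurations are reachable, and check that the total count is at most $d!2^d+2$. Once $n\le d!2^d+2$ is established, Theorem~\ref{thm:main2} tells us that each intersection $K_i\cap K_j$ in this auxiliary attractor is nonempty if and only if the approximations $Q_{i,c(n,d)}$ and $Q_{j,c(n,d)}$ already meet. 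Because the level-$k$ approximations $Q_{i,k}$ of the auxiliary pieces are built from the same data as the geometric approximations $F_k$ of the sponge, I would translate the condition at level $c(n,d)$ for the auxiliary system into a condition on $F_{c(n,d)}$, and then show that testing intersections at that level is equivalent to testing the connectedness of $F_{c(n,d)+1}$ directly. This is where the precise index $C(d)=c(d!2^d+2,d)-1$ and the shift to level $C(d)+1$ come from.

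The last part is to close the logical loop: I would argue that $F$ is connected if and only if $F_{C(d)+1}$ is connected. One direction is immediate, since $F=\bigcap_k F_k$ is a nested intersection of the compact sets $F_k$, so connectedness of $F$ forces connectedness of every $F_k$ (a nested intersection of compact connected sets is connected, and each $F_k\supset F$). For the converse I would use the finite-level criterion: if $F_{C(d)+1}$ is connected, then the Hata graph has all its true edges detectable at level $C(d)+1$, so by the equivalence from Theorem~\ref{thm:main2} the corresponding limit intersections $\vp_i(F)\cap\vp_j(F)$ are genuinely nonempty, the Hata graph is connected, and hence $F$ is connected by Hata's theorem. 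The delicate point is ensuring that connectedness of the single approximation $F_{C(d)+1}$ correctly captures all the pairwise intersection edges at once, rather than merely some of them.

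The main obstacle I expect is the bookkeeping in the counting step: rigorously enumerating the reachable vertices of the auxiliary graph-directed system and confirming the bound $n\le d!2^d+2$, including correctly handling how the symmetries $O_i\in\mathcal{O}_d$ compose as one iterates. A secondary subtlety is verifying that the equivalence ``intersection nonempty at level $c(n,d)$'' translates cleanly into ``$F_{C(d)+1}$ connected'' without an off-by-one error in the level indexing — this is precisely why the statement uses $C(d)=c(\cdot,\cdot)-1$ and then tests $F_{C(d)+1}$.
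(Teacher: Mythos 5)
Your plan follows the paper's proof essentially exactly: Hata's criterion reduces connectedness of $F$ to the pairwise intersections $\vp_i(F)\cap\vp_j(F)$; the tuple $(O(F))_{O\in\mathcal{O}_d}$ is realized as a Cantor type graph-directed attractor on $d!2^d$ vertices whose level-$t$ approximations are exactly the $O(F_t)$; and for each face-adjacent pair one adjoins precisely two extra vertices carrying $N^{-1}O_i(F)$ and $N^{-1}(O_j(F)+\alpha)$, which is where $n=d!2^d+2$ and the level shift $C(d)+1=c(n,d)$ come from, so that Theorem~\ref{thm:main2} upgrades a level-$C(d)$ intersection of approximations to a genuine intersection of the limit sets. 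The one slip is in your ``immediate'' direction: the parenthetical reason you give (a nested intersection of compact connected sets is connected) argues the converse implication; that $F$ connected forces $F_k$ connected instead follows because $F_k$ is the union of the connected set $F$ with finitely many level-$k$ cubes, each of which is connected and meets $F$.
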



The paper is organized as follows. In Section 2, we construct the intersection graph and prove Theorem~\ref{thm:main1}. In Section 3, we prove Theorem~\ref{thm:main2} and discuss the sharpness of the constant $c(n,d)$. In Section 4, we present the method of plotting Hata graphs associated with sponge-like sets and prove Theorem~\ref{thm:main3}. Section 3.2 is devoted to discussions on possible improvements of the constant $C(d)$ in Theorem~\ref{thm:main3} under suitable conditions. Finally, we discuss general settings under which our approach remains applicable in the last section.

\section{The intersection problem I: Creating auxiliary graphs}

Throughout this section, $(K_1,\ldots,K_n)$ is presumed to be a fixed graph-directed attractor in $\R^d$ which is of Cantor-type, and the notations $G=G(\V_n,\E)$, $\E_{i,j}$, $\vp_e$, $t_e$, $Q_{i,k}$, etc. are as in Section 1. Here is a concrete example.

\begin{example}\label{exa:1}
    Let $G$ be the directed graph as in Figure~\ref{fig:digraphofexa}. Set $d=1$, $N=4$ and
    \[
        \vp_{e_1}(x)=\vp_{e_3}(x) = \frac{x}{4},\, \vp_{e_2}(x)=\frac{x}{4}+\frac{1}{4},\, \vp_{e_4}(x) = \frac{x}{4}+\frac{3}{4}.
    \]
    So $t_{e_1}=t_{e_3}=0$, $t_{e_2}=1$ and $t_{e_4}=3$. Clearly, $\{\vp_{e_i}: 1\leq i\leq 4\}$ is of Cantor-type.
\end{example}

\begin{figure}[htbp]
    \centering
        \begin{tikzpicture}[>=stealth]
            \coordinate[label = $1$] (1) at (0, 0);
            \node at (1)[circle,fill,inner sep=2pt]{};
            \coordinate[label = $2$] (2) at (3, 0);
            \node at (2)[circle,fill,inner sep=2pt]{};
            \draw[-{>[flex=1]},thick] (-0.1,0.12) arc (20:340:.3cm);
            \draw[-{>[flex=1]},thick] (3.1,0.12) arc (160:-160:.3cm);
            \draw[-{>[flex=1]},thick] (0.15,0.12) arc (120:60:2.7cm);
            \draw[-{>[flex=1]},thick] (2.85,-0.12) arc (-60:-120:2.7cm);
            \node at (-0.9,0)[circle,inner sep=2pt]{$e_2$};
            \node at (3.9,0)[circle,inner sep=2pt]{$e_3$};
            \node at (1.5,0.7)[circle,inner sep=2pt]{$e_1$};
            \node at (1.5,-0.7)[circle,inner sep=2pt]{$e_4$};
        \end{tikzpicture}
        \caption{Directed graph in Example~\ref{exa:1}}
        \label{fig:digraphofexa}
\end{figure}

For $k\geq 1$ and $i\in\V_n$, denote $\E_i^k$ to be the collection of walks in the directed graph $G$ which starts from $i$ and has length $k$. For $\mathbf{e}=(e_1,\ldots,e_k)\in\E_i^k$, denote by $\ter(\mathbf{e})$ the terminal vertex of $\mathbf{e}$ and write $\vp_{\mathbf{e}}:=\vp_{e_1}\circ\cdots\circ\vp_{e_k}$.

\begin{lemma}\label{lem:lengthkwalk}
    Let $k\geq 1$ and let $i\in\V_n$. Then for each $t\geq 0$,
    \[
        Q_{i,k+t} = \bigcup_{\mathbf{e}\in\E_i^k} \vp_{\mathbf{e}}(Q_{\ter(\mathbf{e}),t}).
    \]
    As a result, $K_i = \bigcup_{\mathbf{e}\in\E_i^k} \vp_{\mathbf{e}}(K_{\ter(\mathbf{e})})$.
\end{lemma}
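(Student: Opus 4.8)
The plan is to prove the first identity by induction on the walk length $k$, keeping $t\geq 0$ as a free parameter throughout so that the inductive hypothesis is available at every level; the ``As a result'' clause will then follow by letting $t\to\infty$ and invoking Lemma~\ref{lem:approximation}.

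For the base case $k=1$, I would observe that a length-$1$ walk from $i$ is precisely an edge $e$ emanating from $i$, so that $\E_i^1=\bigcup_{j=1}^n\E_{i,j}$; moreover, for $\mathbf{e}=(e)$ with $e\in\E_{i,j}$ we have $\vp_{\mathbf{e}}=\vp_e$ and $\ter(\mathbf{e})=j$. Hence the proposed right-hand side collapses exactly to the defining recursion $Q_{i,1+t}=\bigcup_{j=1}^n\bigcup_{e\in\E_{i,j}}\vp_e(Q_{j,t})$ of Lemma~\ref{lem:approximation}, establishing the base case. For the inductive step, I would rewrite $Q_{i,(k+1)+t}$ as $Q_{i,k+(t+1)}$ and apply the inductive hypothesis with parameter $t+1$, yielding $Q_{i,(k+1)+t}=\bigcup_{\mathbf{e}\in\E_i^k}\vp_{\mathbf{e}}(Q_{\ter(\mathbf{e}),t+1})$. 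For each $\mathbf{e}\in\E_i^k$ with terminal vertex $v=\ter(\mathbf{e})$, I then expand $Q_{v,t+1}$ by one more application of the recursion and push $\vp_{\mathbf{e}}$ inside the resulting finite union, which is legitimate since the set-image of any map distributes over unions. Writing $\mathbf{e}e'$ for the walk obtained by appending an edge $e'$ leaving $v$, this turns $\vp_{\mathbf{e}}(Q_{v,t+1})$ into $\bigcup_{e'}\vp_{\mathbf{e}e'}(Q_{\ter(e'),t})$.

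The only bookkeeping point in the inductive step is the reindexing: every length-$(k+1)$ walk from $i$ factors uniquely as a length-$k$ walk from $i$ followed by one outgoing edge, so the double union over the pairs $(\mathbf{e},e')$ is exactly the union over $\E_i^{k+1}$, which completes the step.

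For the ``As a result'' claim, I would take the intersection over $t\geq 0$ of the identity just proved and use $K_i=\bigcap_t Q_{i,k+t}$ from Lemma~\ref{lem:approximation}. The step I expect to require the most care is commuting this intersection past the finite union over $\E_i^k$: since $\E_i^k$ is finite and, by Lemma~\ref{lem:approximation}, each sequence $\{\vp_{\mathbf{e}}(Q_{\ter(\mathbf{e}),t})\}_t$ is a decreasing sequence of compact sets, a short pigeonhole argument gives $\bigcap_t\bigcup_{\mathbf{e}}\vp_{\mathbf{e}}(Q_{\ter(\mathbf{e}),t})=\bigcup_{\mathbf{e}}\bigcap_t\vp_{\mathbf{e}}(Q_{\ter(\mathbf{e}),t})$. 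Finally, since each $\vp_{\mathbf{e}}$ is a homeomorphism it commutes with the decreasing intersection, so $\bigcap_t\vp_{\mathbf{e}}(Q_{\ter(\mathbf{e}),t})=\vp_{\mathbf{e}}(K_{\ter(\mathbf{e})})$, yielding the desired formula. Alternatively, one could run the very same induction directly on the defining fixed-point equation $K_i=\bigcup_{j=1}^n\bigcup_{e\in\E_{i,j}}\vp_e(K_j)$, bypassing the limiting argument entirely. Overall none of the steps presents a genuine difficulty: the induction is routine, and the only mild subtlety is the finiteness-plus-monotonicity justification used to interchange the intersection and the finite union in the limit.
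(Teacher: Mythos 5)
Your proposal is correct and follows essentially the same route as the paper: the induction on $k$ is just a formalization of the paper's repeated unrolling of the recursion $Q_{i,k+t}=\bigcup_{j}\bigcup_{e\in\E_{i,j}}\vp_e(Q_{j,k+t-1})$, and the derivation of $K_i=\bigcup_{\mathbf{e}\in\E_i^k}\vp_{\mathbf{e}}(K_{\ter(\mathbf{e})})$ by intersecting over $t$ and commuting the intersection past the finite union and past each $\vp_{\mathbf{e}}$ is exactly the paper's argument. Your extra care in justifying the interchange via finiteness and the decreasing compact sets is a welcome detail that the paper leaves implicit.
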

\begin{proof}
    The proof is straightforward: Just note that
    \begin{align*}
        Q_{i,k+t} &= \bigcup_{j_1=1}^n\bigcup_{e_1\in\E_{i,j_1}}\vp_{e_1}(Q_{j_1,k+t-1}) \\
        &= \bigcup_{j_1=1}^n\bigcup_{e_1\in\E_{i,j_1}}\vp_{e_1}\Big( \bigcup_{j_2=1}^n\bigcup_{e_2\in\E_{j_1,j_2}}\vp_{e_2}(Q_{j_2,k+t-2}) \Big) \\
        &= \cdots = \bigcup_{\mathbf{e}\in\E_i^k} \vp_{\mathbf{e}}(Q_{\ter(\mathbf{e}),t}).
    \end{align*}
    Since $\{Q_{j,k}\}_k$ is decreasing for all $j$,
    \begin{align*}
        K_i = \bigcap_{t=0}^\infty Q_{i,k+t} = \bigcap_{t=0}^\infty\bigcup_{\mathbf{e}\in\E_i^k} \vp_{\mathbf{e}}(Q_{\ter(\mathbf{e}),t}) = \bigcup_{\mathbf{e}\in\E_i^k} \vp_{\mathbf{e}}\Big( \bigcap_{t=0}^\infty Q_{\ter(\mathbf{e}),t} \Big) = \bigcup_{\mathbf{e}\in\E_i^k} \vp_{\mathbf{e}}(K_{\ter(\mathbf{e})}).
    \end{align*}
\end{proof}

In particular, $Q_{i,k}=\bigcup_{\mathbf{e}\in\E_i^k}\vp_{\mathbf{e}}([0,1]^d)$. For narrative convenience, we shall call any cube in $\bigcup_{i=1}^n \{\vp_{\mathbf{e}}([0,1]^d):\mathbf{e}\in\E_i^k\}$ a \emph{level-$k$} cube. Note that every $Q_{i,k}$ is the union of a finite number of level-$k$ cubes. 

Recall that for any given polytope $P\subset\R^d$, a \emph{face} of $P$ is any non-empty set of the form $P\cap \{x\in\R^d: a\cdot x=b\}$, where $a\in\R^d$, $b\in \R$ and the inequality $a\cdot x\leq b$ is valid for all $x\in P$ (see~\cite{Zie95}). Moreover, the \emph{dimension} of a face is just the dimension of its affine hull. For example, faces of the unit cube $[0,1]^d$ of dimensions $0$ and $1$ are vertices and edges of $[0,1]^d$, respectively. Since the graph-directed attractor is of Cantor-type, it is not hard to see that every level-$k$ cube can be written as
\[
    \Big[ \frac{p_1}{N^k},\frac{p_1+1}{N^k} \Big] \times \Big[ \frac{p_2}{N^k},\frac{p_2+1}{N^k} \Big] \times\cdots\times \Big[ \frac{p_d}{N^k},\frac{p_d+1}{N^k} \Big]
\] for some integers $0\leq p_1,\ldots,p_d\leq N^k-1$. Consequently, the intersection of any pair of level-$k$ cubes is just the largest common face they share.

\subsection{Examination of faces of the unit cube}
Since all of the translation vectors belong to $\{0,1,\ldots,N-1\}^d$, if two level-$k$ cubes intersect then they have a common face. So a preliminary question to the intersection problem is: \emph{Given any face $P$ of $[0,1]^d$ with $0\leq \dim P\leq d-1$, which of $K_1,\ldots,K_n$ intersects it?} The following lemma and its corollaries help to determine which parts of $K_i$ touch $P$.

\begin{lemma}\label{lem:face}
    Let $P$ be a face of $[0,1]^d$ with $0\leq \dim P\leq d$ and let $\vp(x)=N^{-1}(x+t)$ where $t=(t_1,\ldots,t_d)\in\{0,1,\ldots,N-1\}^d$. We have for all $E\subset[0,1]^d$ that $P\cap\vp(E)\subset\vp(P\cap E)$. Furthermore, if $P\cap\vp(E)$ is non-empty then it equals $\vp(P\cap E)$. 
\end{lemma}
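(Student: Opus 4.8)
The plan is to reduce everything to a coordinate-wise computation using the standard combinatorial description of the faces of the unit cube. Recall that a nonempty face $P$ of $[0,1]^d$ is determined by a subset $S\subseteq\{1,\ldots,d\}$ of ``frozen'' coordinates together with a sign vector $\sigma\colon S\to\{0,1\}$, so that
\[
    P=\{x\in[0,1]^d: x_i=\sigma(i)\text{ for all }i\in S\},
\]
and $\dim P=d-|S|$. (When $\dim P=d$ we have $S=\varnothing$ and $P=[0,1]^d$, which makes every assertion trivial; so I may assume $0\le\dim P\le d-1$, i.e.\ $S\neq\varnothing$.)

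For the first inclusion I would exploit that $\vp$ is a bijection of $\R^d$. Applying $\vp^{-1}$ and using $\vp^{-1}(\vp(E))=E$, the inclusion $P\cap\vp(E)\subset\vp(E\cap P)$ is equivalent to $\vp^{-1}(P)\cap E\subset P$. Since $E\subset[0,1]^d$, it then suffices to prove the purely geometric statement
\[
    z\in[0,1]^d,\ \vp(z)\in P\ \Longrightarrow\ z\in P.
\]
This I would check coordinate by coordinate. For $i\in S$ the condition $\vp(z)\in P$ reads $(z_i+t_i)/N=\sigma(i)$. If $\sigma(i)=0$ then $z_i+t_i=0$ forces $z_i=t_i=0$ (both being nonnegative); if $\sigma(i)=1$ then $z_i+t_i=N$ forces $z_i=1$ and $t_i=N-1$ (as $z_i\le1$ and $t_i\le N-1$). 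In either case $z_i=\sigma(i)$, so $z\in P$.

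For the ``furthermore'' part, the first inclusion already gives $P\cap\vp(E)\subset\vp(P\cap E)$, so it remains to prove the reverse inclusion under the hypothesis $P\cap\vp(E)\neq\varnothing$. The key observation is that the computation above, applied to any witness $\vp(z_0)\in P\cap\vp(E)$, simultaneously pins down the translation along the frozen coordinates: $t_i=(N-1)\sigma(i)$ for every $i\in S$. Granting this, for any $w\in P\cap E$ we have $w_i=\sigma(i)$, hence $\vp(w)_i=(\sigma(i)+(N-1)\sigma(i))/N=\sigma(i)$ for $i\in S$, so that $\vp(w)\in P$; this yields $\vp(P\cap E)\subset P\cap\vp(E)$ and equality follows.

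The arguments are elementary, and the only point that requires genuine care is the integrality/boundary bookkeeping in the coordinate analysis: the bounds $0\le z_i\le1$ and $0\le t_i\le N-1$ leave no slack in the equations $z_i+t_i\in\{0,N\}$, which is exactly what forces $z_i=\sigma(i)$ and fixes $t_i$ at the same time. This is also the conceptual reason the nonemptiness hypothesis is indispensable in the second assertion: in general $\vp$ need not map $P$ into $P$, and it is precisely the existence of a single intersection point that detects the alignment $t_i=(N-1)\sigma(i)$ needed to conclude $\vp(P)\subset P$. I would present the reduction via $\vp^{-1}$ so that the coordinate computation is carried out only once.
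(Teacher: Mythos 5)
Your proposal is correct and follows essentially the same route as the paper: the same coordinate description of faces, the same integrality argument forcing $z_i=\sigma(i)$ and $t_i=(N-1)\sigma(i)$ from a single intersection point, and the same use of that pinned-down translation to get the reverse inclusion. The only cosmetic difference is that you phrase the first inclusion via $\vp^{-1}$ rather than picking a point $y=\vp(x)$ directly, which does not change the substance.
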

\begin{proof}
    If $\dim P=d$ then $P$ is the unit cube itself and there is nothing to prove. So it suffices to consider cases when $\dim P\leq d-1$. Letting $s:=d-\dim P$, we can find a $0$-$1$ vector $\alpha=(a_1,\ldots,a_s)$ and a sequence $1\leq m_1<m_2<\cdots<m_s\leq d$ such that
    \begin{equation}\label{eq:P-def}
        P = \{(x_1,\ldots,x_d)\in[0,1]^d: x_{m_k}=a_k, 1\leq k\leq s\}.
    \end{equation}

    If $P\cap\vp(E)=\varnothing$ then the inclusion clearly holds, so we may assume that $P\cap\vp(E)\neq\varnothing$. Choosing any $y\in P\cap\vp(E)$, there is some $(x_1,\ldots,x_d)\in E$ such that $y=\vp(x_1,\ldots,x_d)=N^{-1}(x_1+t_{1},\ldots,x_d+t_{d})$. Since $y\in P$, $N^{-1}(x_{m_k}+t_{m_k})=a_k$ for $1\leq k\leq s$. If $a_k=0$ then $x_{m_k}+t_{m_k}=0$. So $x_{m_k}=t_{m_k}=0=a_k$. If $a_k=1$ then $x_{m_k}+t_{m_k}=N$. Since $t_{m_k}\leq N-1$, we have $x_{m_k}=1=a_k$ and $t_{m_k}=N-1$. We conclude that $x_{m_k}=a_k$ for $1\leq k\leq s$ so $(x_1,\ldots,x_d)\in P$. Thus $y=\vp(x_1,\ldots,x_d) \in \vp(P\cap E)$. Since $y$ is arbitrarily chosen, $P\cap\vp(E)\subset \vp(P\cap E)$.

    On the other hand, let $z=(z_1,\ldots,z_d)\in P\cap E$. It follows from the definition of $P$ that $z_{m_k}=a_k$ for $1\leq k\leq s$. Since $P\cap \vp(E)\not=\varnothing$, from the above argument, $t_{m_k}=a_k$ if $a_k=0$ and $t_{m_k}=N-1$ if $a_k=1$. Thus $z_{m_k}+t_{m_k}=Na_k$ for $1\leq k\leq s$ so that
    \[
        N^{-1}(z_{m_k}+t_{m_k}) = a_k.
    \] 
    This implies that $\vp(z)\in P$ and hence $\vp(P\cap E)\subset P\cap\vp(E)$. So $\vp(P\cap E)=P\cap\vp(E)$.
\end{proof}

\begin{corollary}\label{lem:fixpt}
    Let $\alpha$ be a vertex of $[0,1]^d$ and let $e\in\E$. If $\alpha\in\vp_e([0,1]^d)$ then $\alpha=\vp_e(\alpha)$. Moreover, $\vp_e(x)\neq\alpha$ whenever $x\neq\alpha$.
\end{corollary}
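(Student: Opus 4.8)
The plan is to deduce both assertions from Lemma~\ref{lem:face} by taking the face $P$ to be the single vertex $\{\alpha\}$, which is precisely a face of $[0,1]^d$ of dimension $0$, and then to invoke the injectivity of $\vp_e$ for the second half. So the corollary is really just the zero-dimensional instance of the preceding lemma, packaged together with an elementary observation.

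First I would specialize Lemma~\ref{lem:face} to $P=\{\alpha\}$, $\vp=\vp_e$, and $E=[0,1]^d$; note that $\alpha\in[0,1]^d$ since it is a vertex, so $P\cap[0,1]^d=\{\alpha\}$. The standing hypothesis $\alpha\in\vp_e([0,1]^d)$ states exactly that $P\cap\vp_e([0,1]^d)=\{\alpha\}\neq\varnothing$, which is the non-emptiness condition that activates the ``furthermore'' clause of Lemma~\ref{lem:face}. That clause then gives the chain of equalities
\[
    \{\alpha\} = P\cap\vp_e([0,1]^d) = \vp_e\bigl(P\cap[0,1]^d\bigr) = \vp_e(\{\alpha\}) = \{\vp_e(\alpha)\},
\]
and comparing the outermost sets yields $\alpha=\vp_e(\alpha)$, establishing the first assertion.

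For the ``moreover'' part I would use that $\vp_e(x)=N^{-1}(x+t_e)$ is affine with invertible linear part $N^{-1}\id$ and hence injective. Having just shown $\vp_e(\alpha)=\alpha$, injectivity forces $\alpha$ to have no preimage other than itself: if $x\neq\alpha$ then $\vp_e(x)\neq\vp_e(\alpha)=\alpha$. As for obstacles, there is essentially no computational difficulty in this corollary; the only point that genuinely requires care is recognizing that the hypothesis $\alpha\in\vp_e([0,1]^d)$ is the exact non-emptiness hypothesis of Lemma~\ref{lem:face}, so that the \emph{equality} $P\cap\vp_e(E)=\vp_e(P\cap E)$ (rather than the mere inclusion) is available. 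Once the vertex $\alpha$ is correctly identified as the zero-dimensional face $\{\alpha\}$, both conclusions follow immediately.
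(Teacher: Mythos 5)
Your proof is correct and follows essentially the same route as the paper: both specialize Lemma~\ref{lem:face} to the zero-dimensional face $P=\{\alpha\}$ with $E=[0,1]^d$, use the non-emptiness hypothesis to get the equality $\{\alpha\}=\{\alpha\}\cap\vp_e([0,1]^d)=\{\vp_e(\alpha)\}$, and dispose of the second assertion by the injectivity of $\vp_e$.
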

\begin{proof}
    Since $\alpha$ is a $0$-dimensional face of $[0,1]^d$, Lemma~\ref{lem:face} implies that 
    \[
        \{\alpha\}=\{\alpha\}\cap\vp_e([0,1]^d)=\{\vp_{e}(\alpha)\}
    \]
    and hence $\alpha=\vp_e(\alpha)$. The second statement clearly holds since $\vp_e$ is injective.
\end{proof}

The following corollary will be used later.

\begin{corollary}\label{cor:face}
    Let $\vp_1(x)=N^{-1}(x+t_1),\ldots,\vp_m(x)=N^{-1}(x+t_m)$ be contracting similarities with $t_1,\ldots,t_m\in\{0,1,\ldots,N-1\}^d$. Let $P$ be a face of $[0,1]^d$ with $1\leq \dim P\leq d$. If $\vp_i([0,1]^d)\cap P\neq\varnothing$ for all $1\leq i\leq m$ then
    \[
        \big( \vp_1\circ\cdots\circ\vp_m([0,1]^d) \big) \cap P = \big( \vp_1\circ\cdots\circ\vp_m(P) \big) \cap P.
    \]
\end{corollary}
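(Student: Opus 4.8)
The plan is to proceed by induction, peeling off the compositions one map at a time and repeatedly invoking Lemma~\ref{lem:face}. The heart of the matter is a single clean consequence of that lemma: whenever $\vp_i([0,1]^d)\cap P\neq\varnothing$, the ``furthermore'' clause (applied with $E=[0,1]^d$) yields $\vp_i([0,1]^d)\cap P=\vp_i(P)$, and since the left-hand side is contained in $P$ we deduce $\vp_i(P)\subset P$. In other words, the hypothesis forces each of the given maps to send the face $P$ into itself, and this is exactly what drives everything.

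To exploit this, I would introduce the partial tails $E_k:=\vp_k\circ\cdots\circ\vp_m([0,1]^d)$ for $1\le k\le m$, together with $E_{m+1}:=[0,1]^d$, so that $E_k=\vp_k(E_{k+1})$ and $E_1=\vp_1\circ\cdots\circ\vp_m([0,1]^d)$. The main claim is that
\[
    P\cap E_k = \vp_k\circ\cdots\circ\vp_m(P), \qquad 1\le k\le m+1,
\]
with the empty composition (the case $k=m+1$) read as the identity. I would prove this by downward induction on $k$. The base case $k=m+1$ is just $P\cap[0,1]^d=P$. For the inductive step, assuming $P\cap E_{k+1}=\vp_{k+1}\circ\cdots\circ\vp_m(P)$, the goal is to apply the ``furthermore'' part of Lemma~\ref{lem:face} with $\vp=\vp_k$ and $E=E_{k+1}$ to obtain $P\cap\vp_k(E_{k+1})=\vp_k(P\cap E_{k+1})$, which by the inductive hypothesis equals $\vp_k\circ\cdots\circ\vp_m(P)$.

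The one point that needs care --- and which I expect to be the main obstacle --- is verifying the nonemptiness hypothesis required to invoke that clause, namely $P\cap\vp_k(E_{k+1})\neq\varnothing$. Here the preliminary observation $\vp_k(P)\subset P$ does the work: by the inductive hypothesis $P\cap E_{k+1}=\vp_{k+1}\circ\cdots\circ\vp_m(P)$ is a nonempty set (being the image of the nonempty $P$), so picking any $z$ in it gives $\vp_k(z)\in\vp_k(E_{k+1})$ and $\vp_k(z)\in\vp_k(P)\subset P$, whence $\vp_k(z)\in P\cap\vp_k(E_{k+1})$. With nonemptiness secured at each level, Lemma~\ref{lem:face} completes the step and the induction yields $P\cap E_1=\vp_1\circ\cdots\circ\vp_m(P)$.

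Finally, the claimed identity follows at once: the left-hand side $(\vp_1\circ\cdots\circ\vp_m([0,1]^d))\cap P$ is precisely $P\cap E_1=\vp_1\circ\cdots\circ\vp_m(P)$, and since this set is contained in $P$ it equals $(\vp_1\circ\cdots\circ\vp_m(P))\cap P$, which is the right-hand side. I would remark that the requirement $\dim P\ge 1$ is not used beyond guaranteeing $P\neq\varnothing$, and that the hypothesis $\vp_i([0,1]^d)\cap P\neq\varnothing$ for every $i$ enters exactly once per map, to ensure $\vp_i(P)\subset P$.
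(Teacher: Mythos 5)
Your proof is correct, and it is a genuine (if close) variant of the paper's argument. The paper also inducts on the number of maps, but it appends the new map at the \emph{innermost} position: assuming the identity for $\vp_1\circ\cdots\circ\vp_k$, it writes $\vp_1\circ\cdots\circ\vp_{k+1}([0,1]^d)\cap P$ as a triple intersection with $\vp_1\circ\cdots\circ\vp_k([0,1]^d)$, invokes the inductive hypothesis to replace that set by $\vp_1\circ\cdots\circ\vp_k(P)$, uses injectivity of $\vp_1\circ\cdots\circ\vp_k$ to pull the intersection inside, and finally applies Lemma~\ref{lem:face} to $\vp_{k+1}([0,1]^d)\cap P$, whose nonemptiness is a direct hypothesis. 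You instead prepend the new map at the \emph{outermost} position, proving the stronger intermediate identity $P\cap\vp_k\circ\cdots\circ\vp_m([0,1]^d)=\vp_k\circ\cdots\circ\vp_m(P)$ with no trailing $\cap\,P$; the price is that Lemma~\ref{lem:face} must be applied with $E=E_{k+1}$ rather than $E=[0,1]^d$, so the nonemptiness hypothesis is no longer free, and you correctly identify and discharge this via the observation $\vp_i(P)\subset P$ (which also neatly isolates where the hypothesis $\vp_i([0,1]^d)\cap P\neq\varnothing$ is used). Your version buys a cleaner conclusion and makes the nonemptiness bookkeeping explicit --- something the paper's chain of equalities passes over quickly --- while the paper's version avoids having to re-verify nonemptiness at each stage. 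One tiny point you leave implicit: to apply Lemma~\ref{lem:face} with $E=E_{k+1}$ you need $E_{k+1}\subset[0,1]^d$, which holds because each $\vp_i$ maps $[0,1]^d$ into itself; it is worth one clause in a written-up proof.
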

\begin{proof}
    We will prove this by induction on $m$. When $m=1$, this follows directly from Lemma~\ref{lem:face}. Suppose the result holds for $1\leq m\leq k$. Then
    \begin{align*}
        \big( \vp_1\circ\cdots\circ\vp_{k+1}([0,1]^d) \big) \cap P &= \big( \vp_1\circ\cdots\circ\vp_{k+1}([0,1]^d) \big) \cap \big( \vp_1\circ\cdots\circ\vp_{k}([0,1]^d) \big) \cap P \\
        &= \big( \vp_1\circ\cdots\circ\vp_{k+1}([0,1]^d) \big) \cap \big( \vp_1\circ\cdots\circ\vp_{k}(P) \big) \cap P \\
        &= \big( \vp_1\circ\cdots\circ\vp_{k}(\vp_{k+1}([0,1]^d)\cap P) \big) \cap P \\
        &= \big( \vp_1\circ\cdots\circ\vp_k\circ\vp_{k+1}(P) \big) \cap P,
    \end{align*}
    as desired.
\end{proof}

Let $P$ be a face of $[0,1]^d$ with $0\leq\dim P\leq d-1$. To determine which of $K_1, \ldots, K_n$ intersects $P$, we will draw an auxiliary directed graph $G_P$ as follows. 

\begin{definition}[Auxiliary graph]
    The vertex set of $G_P$ is the index set $\V_n=\{1,2,\ldots,n\}$. For $i,j\in\V_n$, we add an edge from $i$ to $j$ whenever $P\cap\bigcup_{e\in\E_{i,j}}\vp_e([0,1]^d)\neq\varnothing$. 
\end{definition}

Figure~\ref{fig:vertexgraofexa} depicts the graphs $G_{\{0\}}$ and $G_{\{1\}}$ associated with the attractor in Example~\ref{exa:1} (where the subscripts $0,1$ stand for the endpoints of $[0,1]$).

\begin{figure}
    \centering
    \subfloat[The graph $G_{\{0\}}$]
    {
        \label{fig: F1}
        \begin{minipage}[b]{145pt}
        \centering
        \begin{tikzpicture}[>=stealth]
            \coordinate[label = $1$] (1) at (0, 0);
            \node at (1)[circle,fill,inner sep=2pt]{};
            \coordinate[label = $2$] (2) at (3, 0);
            \node at (2)[circle,fill,inner sep=2pt]{};
            \draw[-{>[flex=1]},thick] (3.1,0.12) arc (160:-160:.3cm);
            \draw[-{>[flex=1]},thick] (0.15,0.12) arc (120:60:2.7cm);
        \end{tikzpicture}
      \end{minipage}
    }
    \subfloat[The graph $G_{\{1\}}$]
    {
        \begin{minipage}[b]{145pt}
        \centering
        \begin{tikzpicture}[>=stealth]
            \coordinate[label = $1$] (1) at (0, 0);
            \node at (1)[circle,fill,inner sep=2pt]{};
            \coordinate[label = $2$] (2) at (3, 0);
            \node at (2)[circle,fill,inner sep=2pt]{};
            \draw[-{>[flex=1]},white] (-0.1,0.12) arc (20:340:.3cm);
            \draw[-{>[flex=1]},white] (3.1,0.12) arc (160:-160:.3cm);
            \draw[-{>[flex=1]},thick] (2.85,0.12) arc (60:120:2.7cm);
        \end{tikzpicture}
        \end{minipage}
    }
    \caption{Auxiliary graphs in Example~\ref{exa:1}}
    \label{fig:vertexgraofexa}
\end{figure}

The information provided by the graph $G_P$ is revealed in the next two lemmas.

\begin{lemma}\label{lem:0ink1}
    Let $P$ be a face of $[0,1]^d$ with $0\leq\dim P\leq d-1$ and let $i\in\V_n$. For $k\geq 1$, if there is a walk of length $k$ in the graph $G_P$ which starts from $i$, then $P\cap Q_{i,k}\neq\varnothing$.
\end{lemma}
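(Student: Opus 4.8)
The plan is to argue by induction on the walk length $k$, the essential task being to convert the purely combinatorial statement ``there is an edge $i\to j$ in $G_P$'' into the geometric statement that the corresponding generating map fixes the face $P$ set-wise. Before starting the induction I would isolate this one fact from Lemma~\ref{lem:face}: if $e\in\E$ satisfies $P\cap\vp_e([0,1]^d)\neq\varnothing$, then applying Lemma~\ref{lem:face} with $E=[0,1]^d$ (and using $P\subseteq[0,1]^d$, so that $P\cap E=P$) gives $P\cap\vp_e([0,1]^d)=\vp_e(P)$; since the left-hand side is contained in $P$, we deduce the inclusion $\vp_e(P)\subseteq P$. In words, any map $\vp_e$ whose image meets $P$ must send $P$ into itself, and this single observation powers the whole argument.

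The base case $k=1$ is immediate. A length-one walk from $i$ is just an edge $i\to j$ in $G_P$, which by the construction of $G_P$ means $P\cap\bigcup_{e\in\E_{i,j}}\vp_e([0,1]^d)\neq\varnothing$. Since $Q_{i,1}\supseteq\bigcup_{e\in\E_{i,j}}\vp_e([0,1]^d)$, it follows at once that $P\cap Q_{i,1}\neq\varnothing$.

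For the inductive step, I would assume the claim for length-$k$ walks and consider a walk $i=v_0\to v_1\to\cdots\to v_{k+1}$ of length $k+1$ in $G_P$. The first edge $v_0\to v_1$ furnishes some $e_1\in\E_{i,v_1}$ with $P\cap\vp_{e_1}([0,1]^d)\neq\varnothing$ (a finite union meets $P$ precisely when one of its terms does), whence the preliminary observation yields $\vp_{e_1}(P)\subseteq P$. The remaining edges $v_1\to\cdots\to v_{k+1}$ form a length-$k$ walk starting from $v_1$, so the induction hypothesis provides a point $z\in P\cap Q_{v_1,k}$; note $z\in P$ since $Q_{v_1,k}\subseteq Q_{v_1,0}=[0,1]^d$ by Lemma~\ref{lem:approximation}. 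Then $\vp_{e_1}(z)\in\vp_{e_1}(P)\subseteq P$ and simultaneously $\vp_{e_1}(z)\in\vp_{e_1}(Q_{v_1,k})\subseteq Q_{i,k+1}$, the latter because $e_1\in\E_{i,v_1}$ and $Q_{i,k+1}=\bigcup_{j}\bigcup_{e\in\E_{i,j}}\vp_e(Q_{j,k})$. Hence $\vp_{e_1}(z)\in P\cap Q_{i,k+1}$, which closes the induction.

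I do not expect a serious obstacle here once the inclusion $\vp_e(P)\subseteq P$ has been extracted; the argument is essentially a transport of a witness point through the recursive definition of $Q_{i,k}$. The only point demanding care is bookkeeping: one must peel off the \emph{first} edge of the walk (not the last), apply the set-wise invariance $\vp_{e_1}(P)\subseteq P$ to that first map, and keep the remaining sub-walk aligned with the induction hypothesis. All of this is guaranteed by Lemma~\ref{lem:face}, so the remainder is routine.
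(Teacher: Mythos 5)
Your proof is correct and follows essentially the same route as the paper: induction on $k$, peeling off the first edge of the walk and using Lemma~\ref{lem:face} (via the consequence $\vp_{e_1}(P)\subseteq P$) to transport a point of $P\cap Q_{v_1,k}$ into $P\cap Q_{i,k+1}$. The only cosmetic differences are that you track an explicit witness point where the paper writes the set identity $P\cap\vp_{e_1}(Q_{i_1,m})=\vp_{e_1}(P\cap Q_{i_1,m})$, and your base case bypasses Lemma~\ref{lem:face} entirely; both are fine.
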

\begin{proof}
    This can be proved by induction on $k$. Suppose $k=1$ and there is an edge from $i$ to some $j$ in $G_P$. Then there is some $e\in\E_{i,j}$ such that $P\cap\vp_e([0,1]^d)\neq\varnothing$. By Lemma~\ref{lem:face},
    \[
        \vp_e(P) = P\cap\vp_{e}([0,1]^d)= P\cap\vp_{e}(Q_{j,0}) \subset P\cap\bigcup_{t=1}^n\bigcup_{e\in\E_{i,t}} \vp_e(Q_{t,0}) = P\cap Q_{i,1}.
    \]

    Suppose the lemma holds for $1\leq k\leq m$ and let $i\to i_1\to\cdots\to i_m\to i_{m+1}$ be a walk in $G_P$ of length $m+1$. Then $i_1\to i_2\to\cdots\to i_{m+1}$ is a walk of length $m$, so we have by the induction hypothesis that $P\cap Q_{i_1,m}\neq\varnothing$. Since there is an edge from $i$ to $i_1$, we can find some $e_1\in\E_{i,i_1}$ such that $P\cap\vp_{e_1}([0,1]^d)\neq\varnothing$. It then follows from Lemma~\ref{lem:face} that
    \begin{align*}
        P\cap Q_{i,m+1} &= P\cap\bigcup_{t=1}^n\bigcup_{e\in\E_{i,t}}(Q_{t,m}) \\
        &\supset P\cap\vp_{e_1}(Q_{i_1,m}) \\
        &= (P\cap\vp_{e_1}([0,1]^d))\cap\vp_{e_1}(Q_{i_1,m}) \\
        &= \vp_{e_1}(P)\cap\vp_{e_1}(Q_{i_1,m}) = \vp_{e_1}(P\cap Q_{i_1,m}) \neq\varnothing.
    \end{align*}
    This completes the induction.
\end{proof}

\begin{lemma}\label{lem:0ink2}
    Let $P$ be a face of $[0,1]^d$ with $0\leq\dim P\leq d-1$ and let $i\in\V_n$. For $k\geq 1$, if $P\cap Q_{i,k}\neq\varnothing$ then we can find a walk of length $k$ in $G_P$ which starts from $i$.
\end{lemma}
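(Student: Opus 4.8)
The plan is to prove this by induction on $k$, essentially reversing the argument of Lemma~\ref{lem:0ink1}. The crucial tool will again be Lemma~\ref{lem:face}, which lets us ``pull back'' a non-empty intersection with $P$ through a single generating map $\vp_e$.

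For the base case $k=1$, I would expand $Q_{i,1}=\bigcup_{j=1}^n\bigcup_{e\in\E_{i,j}}\vp_e([0,1]^d)$. If $P\cap Q_{i,1}\neq\varnothing$, then some single term $\vp_e([0,1]^d)$ with $e\in\E_{i,j}$ already meets $P$; by the very definition of $G_P$ this produces an edge from $i$ to $j$, that is, a walk of length $1$ starting at $i$. For the inductive step, I assume the statement for length $m$ and suppose $P\cap Q_{i,m+1}\neq\varnothing$. Writing $Q_{i,m+1}=\bigcup_{j=1}^n\bigcup_{e\in\E_{i,j}}\vp_e(Q_{j,m})$, I would first locate a vertex $j$ and an edge $e\in\E_{i,j}$ with $P\cap\vp_e(Q_{j,m})\neq\varnothing$. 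Since $\vp_e(Q_{j,m})\subset\vp_e([0,1]^d)$, this immediately gives $P\cap\vp_e([0,1]^d)\neq\varnothing$, hence an edge $i\to j$ in $G_P$. It then remains to produce a walk of length $m$ from $j$, for which, by the induction hypothesis, it suffices to check that $P\cap Q_{j,m}\neq\varnothing$. This is exactly where Lemma~\ref{lem:face} enters: because $P\cap\vp_e(Q_{j,m})$ is non-empty, the lemma yields $P\cap\vp_e(Q_{j,m})=\vp_e(P\cap Q_{j,m})$, and since $\vp_e$ is injective the non-emptiness of the left-hand side forces $P\cap Q_{j,m}\neq\varnothing$. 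Concatenating the edge $i\to j$ with the length-$m$ walk from $j$ then completes the induction.

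The step I expect to require the most care is the passage from $P\cap\vp_e(Q_{j,m})\neq\varnothing$ to $P\cap Q_{j,m}\neq\varnothing$: one must first verify the hypothesis of Lemma~\ref{lem:face} (namely that $P\cap\vp_e([0,1]^d)$ is non-empty, which we do have from $\vp_e(Q_{j,m})\subset\vp_e([0,1]^d)$) before invoking the identity $P\cap\vp_e(E)=\vp_e(P\cap E)$, and then use the injectivity of $\vp_e$ to transfer non-emptiness back to $Q_{j,m}$. Everything else is routine bookkeeping dual to Lemma~\ref{lem:0ink1}.
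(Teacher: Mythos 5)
Your proposal is correct and follows essentially the same route as the paper: the paper peels off one map $\vp_{e_1}$ at a time, uses the definition of $G_P$ to get the edge $i\to j_1$, and invokes Lemma~\ref{lem:face} to transfer $P\cap\vp_{e_1}(Q_{j_1,k-1})\neq\varnothing$ to $P\cap Q_{j_1,k-1}\neq\varnothing$, then "continues in this manner" — which is exactly your induction written informally. (Minor note: you do not even need injectivity of $\vp_e$ at the end, since the containment $P\cap\vp_e(E)\subset\vp_e(P\cap E)$ from Lemma~\ref{lem:face} already forces $P\cap E\neq\varnothing$ because the image of the empty set is empty.)
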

\begin{proof}
    Since
    \[
        \varnothing\neq P\cap Q_{i,k}=P\cap\bigcup_{j=1}^n\bigcup_{e\in\E_{i,j}} \vp_e(Q_{j,k-1}),
    \]
    there is some $j_1$ and $e_1\in\E_{i,j_1}$ such that $P\cap \vp_{e_1}(Q_{j_1,k-1})\neq\varnothing$. Since $Q_{j_1,k-1}\subset[0,1]^d$, $P\cap \vp_{e_1}([0,1]^d)\neq\varnothing$. By definition, there is an edge from $i$ to $j_1$ in the graph $G_P$. Moreover, we see by Lemma~\ref{lem:face} that $P\cap\vp_{e_1}(Q_{j_1,k-1}) =\vp_{e_1}(P\cap Q_{j_1,k-1})$, so $P\cap Q_{j_1,k-1}\neq\varnothing$. Analogously, there is an edge in $G_P$ from $j_1$ to some $j_2$ with $P\cap Q_{j_2,k-2}\neq\varnothing$. Continuing in this manner, we obtain a walk $i\to j_1\to j_2\to\cdots\to j_k$.
\end{proof}

From these facts, it is easy to check from the graph $G_P$ whether $K_i\cap P$ is empty for all $i$.

\begin{corollary}\label{cor:01}
    Let $P$ be a face of $[0,1]^d$ with $0\leq\dim P\leq d-1$. For $i\in\V_n$, $P\cap K_i\neq\varnothing$ if and only if there are arbitrarily long walks in the graph $G_P$ which start from $i$.
\end{corollary}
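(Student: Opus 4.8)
The plan is to read this corollary as the ``limit'' ($k\to\infty$) of Lemmas~\ref{lem:0ink1} and~\ref{lem:0ink2}, which together already pin down $P\cap Q_{i,k}$ exactly. Combining the two lemmas gives the clean equivalence: for every $k\geq 1$, there is a walk of length $k$ in $G_P$ starting from $i$ if and only if $P\cap Q_{i,k}\neq\varnothing$. Since a prefix of a walk is again a walk, the existence of arbitrarily long walks from $i$ is the same as the existence of a walk of length exactly $k$ for every $k\geq 1$; so by the displayed equivalence it is equivalent to $P\cap Q_{i,k}\neq\varnothing$ holding for all $k\geq 1$. It therefore remains only to show that this last condition is equivalent to $P\cap K_i\neq\varnothing$.

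For the forward implication, I would suppose $P\cap K_i\neq\varnothing$. By Lemma~\ref{lem:approximation}, $K_i=\bigcap_{k=1}^\infty Q_{i,k}\subset Q_{i,k}$ for every $k$, so $\varnothing\neq P\cap K_i\subset P\cap Q_{i,k}$ for all $k$. Hence $P\cap Q_{i,k}\neq\varnothing$ for every $k$, and Lemma~\ref{lem:0ink2} then supplies a walk of length $k$ from $i$ for each $k$, that is, arbitrarily long walks. Conversely, assuming there are arbitrarily long walks from $i$, Lemma~\ref{lem:0ink1} yields $P\cap Q_{i,k}\neq\varnothing$ for every $k\geq 1$. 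Here is where I would invoke compactness: the sets $P\cap Q_{i,k}$ are compact (each being the intersection of the closed face $P$ with the compact set $Q_{i,k}$) and, because $\{Q_{i,k}\}_k$ is decreasing by Lemma~\ref{lem:approximation}, they form a nested sequence of non-empty compact sets. By Cantor's intersection theorem, $\bigcap_{k=1}^\infty(P\cap Q_{i,k})\neq\varnothing$, and this intersection equals $P\cap\bigcap_{k=1}^\infty Q_{i,k}=P\cap K_i$. Thus $P\cap K_i\neq\varnothing$, completing the proof.

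The only genuinely nontrivial step is this last compactness argument: the two preceding lemmas control each finite level $Q_{i,k}$, but passing from ``non-empty at every finite level'' to ``non-empty in the infinite intersection $K_i$'' requires the nested-compact-sets structure, without which non-emptiness at every finite level need not persist in the limit. Everything else is bookkeeping---matching the prefix-closure of walks on one side with the monotonicity $K_i\subset Q_{i,k}$ on the other.
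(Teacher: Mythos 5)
Your proposal is correct and follows essentially the same route as the paper: the paper likewise reduces the statement to the equivalence ``$P\cap K_i\neq\varnothing$ iff $P\cap Q_{i,k}\neq\varnothing$ for all $k$'' (which it attributes to Lemma~\ref{lem:approximation}, the nested-compact-sets argument you spell out) and then cites Lemmas~\ref{lem:0ink1} and~\ref{lem:0ink2}. You have merely made explicit the Cantor intersection step that the paper leaves implicit.
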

\begin{proof}
    By Lemma~\ref{lem:approximation}, $P\cap K_i\neq\varnothing$ if and only if $P\cap Q_{i,k}\neq\varnothing$ for all $k\geq 1$. Then the result is an immediate consequence of Lemmas~\ref{lem:0ink1} and~\ref{lem:0ink2}.
\end{proof}

In particular, for the graph-directed attractor $(K_1,K_2)$ in Example~\ref{exa:1}, we have $0\in K_1$, $0\in K_2$ while $1\notin K_1$ and $1\notin K_2$.

\subsection{The intersection problem}

Now we turn to the intersection problem of $(K_1,\ldots,K_n)$. It is proved that to check whether $K_i\cap K_j$ is empty, it suffices to examine two types of walks in the directed graph defined as follows.

\begin{definition}[Intersection graph]\label{de:intersectgraph}
    The vertex set is $\{(i,j):i,j\in\V_n\}$. The edge set is defined as follows. For any vertex $(i,j)$ with $i\neq j$:
    \begin{enumerate}
        \item There is a solid edge from $(i,j)$ to some $(i',j')$ if we can find $e\in\E_{i,i'}$ and $e'\in\E_{j,j'}$ such that $\vp_e=\vp_{e'}$;
        \item There is a dashed edge from $(i,j)$ to some $(i',j')$ if we can find $e\in\E_{i,i'}$ and $e'\in\E_{j,j'}$ such that $\vp_e([0,1]^d)\cap\vp_{e'}([0,1]^d)$ is a common $s$-dimensional face of these two cubes with $0\leq s\leq d-1$.
    \end{enumerate}
\end{definition}

It is also easy to observe a symmetric property of the intersection graph: If there is an edge from $(i,j)$ to $(i',j')$ then there is an edge from $(j,i)$ to $(j',i')$ of the same type (i.e., solid or dashed). We remark that there can be multiple edges from one vertex to another, but at most one edge of each type. See Figure~\ref{fig:intergraph} for the intersection graph associated with the graph-directed system in Example~\ref{exa:1}.

\begin{figure}[htbp]
    \centering
    \begin{tikzpicture}[>=stealth]
        \fill (0,0) circle (0.1) node[below=1pt]{$(2,1)$};
        \fill (2,0) circle (0.1) node[below=1pt]{$(2,2)$};
        \fill (0,2) circle (0.1) node[above=1pt]{$(1,1)$};
        \fill (2,2) circle (0.1) node[above=1pt]{$(1,2)$};
        \draw[->,thick] (0.15,0) to (1.85,0);
        \draw[->,thick] (2,1.85) to (2,0.15);
        \draw[->,thick,dashed] (1.84,2) arc (255:-75:0.6cm and 0.4cm);
        \draw[->,thick,dashed] (0.16,0) arc (-75:255:0.6cm and 0.4cm);
    \end{tikzpicture}
    \caption{Intersection graph in Example~\ref{exa:1}}
    \label{fig:intergraph}
\end{figure}

\begin{lemma}\label{lem:blackedge}
    If there is a solid edge from $(i,j)$ to $(i',j')$, then $K_i\cap K_j$ contains a scaled copy of $K_{i'}\cap K_{j'}$.
\end{lemma}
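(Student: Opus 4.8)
The plan is to exploit the one-step self-referential decomposition of each $K_i$ provided by Lemma~\ref{lem:lengthkwalk} with $k=1$, which reads $K_i=\bigcup_{m}\bigcup_{f\in\E_{i,m}}\vp_f(K_m)$. The existence of a solid edge from $(i,j)$ to $(i',j')$ hands us edges $e\in\E_{i,i'}$ and $e'\in\E_{j,j'}$ with $\vp_e=\vp_{e'}$; denote this common similarity by $\vp$. From the decomposition one immediately reads off the two inclusions $\vp_e(K_{i'})\subset K_i$ and $\vp_{e'}(K_{j'})\subset K_j$, since $e$ (respectively $e'$) is one of the edges starting at $i$ (respectively $j$) and terminating at $i'$ (respectively $j'$).

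First I would intersect these two inclusions to obtain
\[
    K_i\cap K_j\supset \vp_e(K_{i'})\cap\vp_{e'}(K_{j'})=\vp(K_{i'})\cap\vp(K_{j'}).
\]
Then the key step is the set-theoretic identity $\vp(K_{i'})\cap\vp(K_{j'})=\vp(K_{i'}\cap K_{j'})$, which holds because $\vp$ is injective (being a contracting similarity): the inclusion $\supset$ is automatic for any map, while $\subset$ uses injectivity to ensure that a point lying in both images has a single common preimage. Combining the two displays yields
\[
    K_i\cap K_j\supset\vp(K_{i'}\cap K_{j'}),
\]
and since $\vp(x)=N^{-1}(x+t_e)$ is a similarity of ratio $N^{-1}$, the right-hand side is precisely a scaled copy of $K_{i'}\cap K_{j'}$, as claimed.

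I do not anticipate any genuine obstacle here: the whole argument is three lines once the $k=1$ case of Lemma~\ref{lem:lengthkwalk} is in hand. The only point requiring a moment's care is the commutation of intersection with the image under $\vp$, for which injectivity is both necessary and sufficient. This is exactly where the solid-edge hypothesis $\vp_e=\vp_{e'}$ (rather than merely overlapping cubes, as in the dashed case) is used, since it lets a \emph{single} injective map govern both factors of the intersection; without a common map one would only obtain $\vp_e(K_{i'})\cap\vp_{e'}(K_{j'})$, which need not be a similar copy of $K_{i'}\cap K_{j'}$.
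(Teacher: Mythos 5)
Your argument is correct and coincides with the paper's own proof: both decompose $K_i$ and $K_j$ via the defining graph-directed relation, extract the terms $\vp_e(K_{i'})$ and $\vp_{e'}(K_{j'})$ with $\vp_e=\vp_{e'}$, and use injectivity of the common similarity to pull the intersection inside. Your remarks on why injectivity and the common-map hypothesis matter are accurate elaborations of a step the paper leaves implicit.
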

\begin{proof}
    The assumption gives us $e\in\E_{i,i'}$ and $e'\in\E_{j,j'}$ such that $\vp_e=\vp_{e'}$. Thus
    \begin{align*}
        K_i \cap K_j &= \Big( \bigcup_{t=1}^n\bigcup_{w\in\E_{i,t}} \vp_w(K_t) \Big) \cap \Big( \bigcup_{t=1}^n\bigcup_{w\in\E_{j,t}} \vp_w(K_t) \Big) \\
        &\supset \vp_e(K_{i'}) \cap \vp_{e'}(K_{j'}) = \vp_e(K_{i'}\cap K_{j'}).
    \end{align*}
    Since $\vp_e$ is a contracting similarity, this completes the proof.
\end{proof}

\begin{proposition}\label{prop:gds2}
    Let $i,j\in\V_n$ be distinct. If there exists an infinite solid walk in the intersection graph that starts from $(i,j)$, then $K_i\cap K_j\neq\varnothing$.
\end{proposition}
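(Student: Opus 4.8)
The plan is to convert the infinite solid walk into a single point that lies in both $K_i$ and $K_j$, realized as the limit of a nested sequence of cubes. Write the walk as $(i,j)=(i_0,j_0)\to(i_1,j_1)\to\cdots$, and for each $k\geq 1$ let $e_k\in\E_{i_{k-1},i_k}$ and $e_k'\in\E_{j_{k-1},j_k}$ be edges witnessing the $k$-th solid edge, so that $\vp_{e_k}=\vp_{e_k'}$. Set $\psi_k:=\vp_{e_1}\circ\cdots\circ\vp_{e_k}$. The crucial point I would stress is that, because $\vp_{e_l}=\vp_{e_l'}$ for every $l$, the very same map also factors as $\psi_k=\vp_{e_1'}\circ\cdots\circ\vp_{e_k'}$; this is exactly the information encoded by a solid (as opposed to dashed) edge, and it is what forces a genuine common point rather than merely touching cubes.

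Next I would produce the candidate point. Since $\vp_e([0,1]^d)\subset[0,1]^d$ for every $e$ (as already observed in the proof of Lemma~\ref{lem:approximation}), the cubes $\psi_k([0,1]^d)$ form a decreasing sequence of non-empty compact sets whose diameters equal $N^{-k}\sqrt d\to0$. By Cantor's intersection theorem, $\bigcap_{k\geq 1}\psi_k([0,1]^d)$ consists of a single point, which I denote by $x$.

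It remains to check that $x\in K_i$ and $x\in K_j$. For the first, note that $(e_1,\ldots,e_k)$ is a length-$k$ walk in $G$ starting from $i=i_0$ with terminal vertex $i_k$, so Lemma~\ref{lem:lengthkwalk} gives $\psi_k(K_{i_k})\subset K_{i_0}=K_i$. Picking any $y_k\in\psi_k(K_{i_k})$, we have $y_k\in K_i$ and $y_k\in\psi_k([0,1]^d)$; since $x\in\psi_k([0,1]^d)$ as well and the diameter of this cube tends to $0$, we get $y_k\to x$, and the closedness of $K_i$ yields $x\in K_i$. The argument for $K_j$ is identical once we use the factorization $\psi_k=\vp_{e_1'}\circ\cdots\circ\vp_{e_k'}$ together with the walk $(e_1',\ldots,e_k')$ from $j=j_0$ to $j_k$. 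Hence $x\in K_i\cap K_j$.

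The main obstacle to watch for is that one cannot simply iterate Lemma~\ref{lem:blackedge}: doing so would only express $K_i\cap K_j$ as containing $\psi_k(K_{i_k}\cap K_{j_k})$, which is useless unless one already knows the far-away intersections $K_{i_k}\cap K_{j_k}$ to be non-empty --- precisely the kind of statement we are trying to prove. The nested-cube and limit-point argument above sidesteps this by working with the individual attractors $K_{i_k}$ and $K_{j_k}$, which are automatically non-empty, and by exploiting that the two chains of maps coincide as a single similarity $\psi_k$.
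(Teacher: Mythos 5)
Your proof is correct and follows essentially the same route as the paper: both identify the common point as the intersection of the nested cubes $\vp_{e_1}\circ\cdots\circ\vp_{e_k}([0,1]^d)=\vp_{e_1'}\circ\cdots\circ\vp_{e_k'}([0,1]^d)$ arising from the coincidence $\vp_{e_k}=\vp_{e_k'}$ along the solid walk. The only cosmetic differences are that the paper places this point directly in $\bigcap_m Q_{i,m}=K_i$ via Lemma~\ref{lem:lengthkwalk} (applying Lemma~\ref{lem:blackedge} once for the first edge) whereas you reach the same conclusion by taking limits of points $y_k\in\psi_k(K_{i_k})\subset K_i$.
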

By a \emph{solid walk} we mean a walk in which all of the edges are solid.
\begin{proof}
    Let $(i,j)\to(i_1,j_1)\to(i_2,j_2)\to\cdots$ be such an infinite walk. Since all edges are solid, we can find for all $k\geq 1$ that $e_k\in\E_{i_k,i_{k+1}}$, $e'_k\in\E_{j_k,j_{k+1}}$ with $\vp_{e_k}=\vp_{e'_k}$. 
    Note that for all $m\geq 1$, $i_1\xrightarrow{e_1}i_2\xrightarrow{e_2}\cdots\xrightarrow{e_m}i_{m+1}$ is a walk in $G$ of length $m$. By Lemma~\ref{lem:lengthkwalk}, 
    \[
        \vp_{e_1}\circ\cdots\circ\vp_{e_m}([0,1]^d) = \vp_{e_1}\circ\cdots\circ\vp_{e_m}(Q_{m+1,0}) \subset Q_{i_1,m}.
    \]
    Similarly, $\vp_{e'_1}\circ\cdots\circ\vp_{e'_m}([0,1]^d)\subset Q_{j_1,m}$. Since $\vp_{e_k}=\vp_{e'_k}$ for all $k$, we see that 
    \[
        \vp_{e_1}\circ\cdots\circ\vp_{e_m}([0,1]^d) \subset Q_{i_1,m}\cap Q_{j_1,m}, \quad m\geq 1.
    \]
    As a result,
    \begin{align*}
        K_{i_1}\cap K_{j_1} &= \Big( \bigcap_{m=1}^\infty Q_{i_1,m} \Big) \cap \Big( \bigcap_{m=1}^\infty Q_{j_1,m} \Big) \\
        &= \bigcap_{m=1}^\infty Q_{i_1,m}\cap Q_{j_1,m} \supset \bigcap_{m=1}^\infty \vp_{e_1}\circ\cdots\circ\vp_{e_m}([0,1]^d),
    \end{align*}
    which is a singleton since $\{\vp_{e_1}\circ\cdots\circ\vp_{e_m}([0,1]^d)\}_{m=1}^\infty$ is a nested sequence. In particular, $K_{i_1}\cap K_{j_1}\neq\varnothing$ and we immediately have by Lemma~\ref{lem:blackedge} that $K_i\cap K_j\neq\varnothing$.
\end{proof}

Our solution to the intersection problem is based on the detection of infinite solid walks and terminated finite walks in the intersection graph. The latter are defined as follows.

\begin{definition}\label{de:terminated}
    Let $(i,j),(i',j')$ be two vertices in the intersection graph. For an edge from $(i,j)$ to $(i',j')$, we call it \emph{terminated} if one of the following happens:
    \begin{enumerate}
        \item The edge is solid and $i'=j'$;
        \item The edge is dashed and there are $e\in\E_{i,i'}, e'\in\E_{j,j'}$ such that $\vp_e([0,1]^d)\cap\vp_{e'}([0,1]^d)$ is a common lower dimensional face of these two cubes and $\vp_e(K_{i'})\cap\vp_{e'}(K_{j'})\neq\varnothing$.
    \end{enumerate}
    Generally, a finite walk of length $m\geq 2$ is called \emph{terminated} if the first $(m-1)$ edges are all solid while the last one is terminated.
\end{definition}

Readers might have noticed that, unlike drawing the intersection graph, determining terminated edges is usually a non-trivial task. While terminated solid edges are easy to check, determining whether a given dashed edge is terminated requires much more information (especially when the dimension $d$ is large). In fact, the latter relies on our solution of the intersection problem of attractors in the lower dimensional spaces $\R,\R^2,\ldots,\R^{d-1}$. See Section 2.3 for a detailed explanation of this induction method.

\begin{proposition}\label{prop:gds1}
    Let $i,j\in\V_n$ be distinct. If there exists a terminated finite walk in the associated intersection graph that starts from $(i,j)$, then $K_i\cap K_j\neq\varnothing$.
\end{proposition}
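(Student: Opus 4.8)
The plan is to split the terminated walk at its final edge, treating the initial string of solid edges and the terminal terminated edge separately. Write the walk as
\[
    (i,j)=(i_0,j_0)\to(i_1,j_1)\to\cdots\to(i_{m-1},j_{m-1})\to(i_m,j_m),
\]
so that the edges out of $(i_0,j_0),\ldots,(i_{m-2},j_{m-2})$ are all solid while the last edge, from $(i_{m-1},j_{m-1})$ to $(i_m,j_m)$, is terminated. The whole argument then reduces to showing $K_{i_{m-1}}\cap K_{j_{m-1}}\neq\varnothing$ and propagating this non-emptiness backwards along the solid edges.

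First I would establish $K_{i_{m-1}}\cap K_{j_{m-1}}\neq\varnothing$ by examining the two possibilities in Definition~\ref{de:terminated} for the terminal edge. If this edge is solid with $i_m=j_m$, then Lemma~\ref{lem:blackedge} tells us that $K_{i_{m-1}}\cap K_{j_{m-1}}$ contains a scaled copy of $K_{i_m}\cap K_{j_m}=K_{i_m}$, which is non-empty because each $K_t$ is a non-empty compact set. If instead the edge is dashed, then by definition there exist $e\in\E_{i_{m-1},i_m}$ and $e'\in\E_{j_{m-1},j_m}$ with $\vp_e(K_{i_m})\cap\vp_{e'}(K_{j_m})\neq\varnothing$; since the graph-directed relation gives $\vp_e(K_{i_m})\subset K_{i_{m-1}}$ and $\vp_{e'}(K_{j_m})\subset K_{j_{m-1}}$, this non-empty intersection already lies inside $K_{i_{m-1}}\cap K_{j_{m-1}}$.

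Next I would run Lemma~\ref{lem:blackedge} backwards along the solid edges. For each $1\leq k\leq m-1$, the solid edge from $(i_{k-1},j_{k-1})$ to $(i_k,j_k)$ forces $K_{i_{k-1}}\cap K_{j_{k-1}}$ to contain a scaled copy of $K_{i_k}\cap K_{j_k}$, so that $K_{i_k}\cap K_{j_k}\neq\varnothing$ implies $K_{i_{k-1}}\cap K_{j_{k-1}}\neq\varnothing$. Starting from the non-emptiness of $K_{i_{m-1}}\cap K_{j_{m-1}}$ secured in the previous step and descending to $k=1$, we conclude $K_i\cap K_j=K_{i_0}\cap K_{j_0}\neq\varnothing$, as claimed.

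There is no serious obstacle here; the proposition is essentially a bookkeeping consequence of Lemma~\ref{lem:blackedge} together with the defining properties of a terminated edge. The only points deserving attention are the degenerate case $m=1$, in which there are no solid edges and the conclusion follows directly from the first step applied at $(i_{m-1},j_{m-1})=(i,j)$, and the observation that the dashed case of Definition~\ref{de:terminated} hands us exactly the input $\vp_e(K_{i_m})\cap\vp_{e'}(K_{j_m})\neq\varnothing$ that we require. This is precisely the quantity whose verification the paper defers to the lower-dimensional induction, and for the present proof we simply take it as given.
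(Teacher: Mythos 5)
Your proof is correct and follows essentially the same route as the paper's: establish $K_{i_{m-1}}\cap K_{j_{m-1}}\neq\varnothing$ from the two cases of the terminated edge (the solid case via Lemma~\ref{lem:blackedge}, the dashed case via the inclusion $\vp_e(K_{i_m})\cap\vp_{e'}(K_{j_m})\subset K_{i_{m-1}}\cap K_{j_{m-1}}$), then propagate non-emptiness back along the solid edges by repeated application of Lemma~\ref{lem:blackedge}. No gaps.
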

\begin{proof}
    Let $(i,j)\to(i_1,j_1)\to\cdots\to(i_m,j_m)$ be a terminated finite walk. When $m=1$, the walk is just a terminated edge. If the edge is solid then $i_1=j_1$. By Lemma~\ref{lem:blackedge}, $K_i\cap K_j$ contains a scaled copy of $K_{i_1}\cap K_{j_1}=K_{i_1}$ and hence is non-empty. If the edge is dashed then there are $e\in\E_{i,i_1}, e'\in\E_{j,j_1}$ such that $\vp_e(K_{i_1})\cap\vp_{e'}(K_{j_1})\neq\varnothing$. Then
    \begin{align*}
        K_i \cap K_j = \Big( \bigcup_{t=1}^n\bigcup_{w\in\E_{i,t}} \vp_w(K_t) \Big) \cap \Big( \bigcup_{t=1}^n\bigcup_{w\in\E_{j,t}} \vp_w(K_t) \Big) \supset \vp_e(K_{i_1}) \cap \vp_{e'}(K_{j_1}) \neq\varnothing.
    \end{align*}

    When $m\geq 2$, $(i_{m-1},j_{m-1})\to(i_m,j_m)$ is a terminated edge and we have seen that $K_{i_{m-1}}\cap K_{j_{m-1}}\neq\varnothing$. Applying Lemma~\ref{lem:blackedge} repeatedly, $K_i\cap K_j$ contains a scaled copy of $K_{i_{m-1}}\cap K_{j_{m-1}}$ and hence is also non-empty.
\end{proof}

The proof of Theorem~\ref{thm:main1} requires another two elementary observations.

\begin{lemma}\label{lem:mainthm1_1}
    Let $i,j\in\V_n$ be distinct and such that $K_i\cap K_j\neq\varnothing$ but there are no terminated finite walks in the intersection graph starting from $(i,j)$. Then for all $m\geq 1$, we can find at least one solid walk which starts from $(i,j)$ and has length $m$.
\end{lemma}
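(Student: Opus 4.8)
The plan is to isolate a single recursive step and then iterate it. Concretely, I would prove the following claim: \emph{if $(a,b)$ is any vertex of the intersecting graph with $a\neq b$, $K_a\cap K_b\neq\varnothing$, and no terminated finite walk starts from $(a,b)$, then there is a solid edge from $(a,b)$ to some vertex $(a',b')$ that again satisfies $a'\neq b'$, $K_{a'}\cap K_{b'}\neq\varnothing$, and admits no terminated finite walk starting from it.} Granting the claim, I would start at $(i,j)=(i_0,j_0)$ and apply it repeatedly to build an infinite solid walk $(i_0,j_0)\to(i_1,j_1)\to\cdots$ every vertex of which enjoys the three properties; any initial segment of length $m$ is then a solid walk of length $m$ starting from $(i,j)$, which is exactly what the lemma asks for.

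To prove the claim I would unfold $K_a\cap K_b$ one level via Lemma~\ref{lem:lengthkwalk} with $k=1$. Since this intersection is non-empty, there exist edges $e\in\E_{a,a'}$ and $e'\in\E_{b,b'}$ with $\vp_e(K_{a'})\cap\vp_{e'}(K_{b'})\neq\varnothing$. In particular the two level-$1$ cubes $\vp_e([0,1]^d)$ and $\vp_{e'}([0,1]^d)$ meet, and as recorded earlier their intersection is the largest common face they share: either they coincide, or they share a face of dimension between $0$ and $d-1$.

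The heart of the argument is the resulting case analysis, which is where the termination hypothesis does all the work. If the two cubes share a lower-dimensional face, then by construction the edge $(a,b)\to(a',b')$ is dashed, and the relation $\vp_e(K_{a'})\cap\vp_{e'}(K_{b'})\neq\varnothing$ obtained above is \emph{precisely} the termination condition in Definition~\ref{de:terminated}(2) for this very pair $e,e'$; this would produce a terminated walk of length $1$ from $(a,b)$, contradicting the hypothesis. Hence the cubes must coincide, i.e.\ $\vp_e=\vp_{e'}$, so the edge $(a,b)\to(a',b')$ is solid. Injectivity of $\vp_e$ then gives $\vp_e(K_{a'})\cap\vp_{e'}(K_{b'})=\vp_e(K_{a'}\cap K_{b'})$, whence $K_{a'}\cap K_{b'}\neq\varnothing$. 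Moreover $a'\neq b'$, for otherwise this solid edge would itself be terminated by Definition~\ref{de:terminated}(1). Finally, no terminated finite walk can start from $(a',b')$, since prepending the solid edge $(a,b)\to(a',b')$ to such a walk would yield a terminated finite walk from $(a,b)$ — prepending a solid edge preserves the ``all edges solid except a terminated last one'' pattern — again a contradiction. This verifies the three required properties of $(a',b')$ and closes the recursion.

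The main obstacle is conceptual rather than computational: one must recognize that the two clauses of Definition~\ref{de:terminated} are engineered exactly so that the hypothesis ``no terminated finite walk from $(i,j)$'' eliminates every branch of the dichotomy except the one yielding a solid edge to a vertex with the same good properties. The only mild technical points to be careful about are that the single witnessing pair $e,e'$ simultaneously realizes the dashed-edge geometry and the termination condition, and that all three properties genuinely propagate along the constructed solid edge, so that the recursion can be continued indefinitely.
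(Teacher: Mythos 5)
Your proposal is correct and is essentially the paper's own argument: both hinge on unfolding $K_a\cap K_b$ one level, using the absence of a terminated dashed edge to rule out the lower-dimensional-face case and force $\vp_e=\vp_{e'}$, and then propagating the hypotheses along the resulting solid edge (the paper phrases this as induction on $m$ rather than iterating a one-step claim, but that is only a difference in packaging). If anything, you are slightly more careful than the paper, which applies its induction hypothesis at $(p,p')$ without explicitly verifying that $p\neq p'$ and that no terminated walk starts from $(p,p')$ --- the two points you check at the end of your recursion step.
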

\begin{proof}
    We shall proceed by induction. Fix any pair of such $i,j$. For $e,e'\in\E$, we temporarily say that they are \emph{compatible} if the two cubes $\vp_e([0,1]^d)$ and $\vp_{e'}([0,1]^d)$ have a non-empty intersection. Note that
    \begin{align}
        K_i \cap K_j &= \Big( \bigcup_{p=1}^n\bigcup_{e\in\E_{i,p}} \vp_e(K_p) \Big) \cap \Big( \bigcup_{p=1}^n\bigcup_{e\in\E_{j,p}} \vp_e(K_p) \Big) \notag\\
        &= \bigcup\{\vp_e(K_p)\cap \vp_{e'}(K_{p'}): 1\leq p,p'\leq n, e\in\E_{i,p} \text{ and } e'\in\E_{j,p'} \text{ are compatible}\}. \label{eq:finiteblack1}
    \end{align}
    If $e\in\E_{i,p},e'\in\E_{j,p'}$ are such that $\vp_{e}([0,1]^d)$ and $\vp_{e'}([0,1]^d)$ share a common non-empty lower dimensional face then we must have $\vp_e(K_p)\cap \vp_{e'}(K_{p'})=\varnothing$, since otherwise there is a terminated dashed edge from $(i,j)$ to $(p,p')$. As a result, we have by~\eqref{eq:finiteblack1} that
    \begin{equation}\label{eq:finiteblack2}
        K_i \cap K_j = \bigcup\{\vp_e(K_p)\cap \vp_{e'}(K_{p'}): 1\leq p,p'\leq n, e\in\E_{i,p},e'\in\E_{j,p'},\vp_e=\vp_{e'}\}.
    \end{equation}
    In particular, if $K_i\cap K_j\neq\varnothing$ then there are $p,p'\in\{1,2,\ldots,n\},e\in\E_{i,p},e'\in\E_{j,p'}$ such that $\vp_e=\vp_{e'}$. By definition, there is a solid edge from $(i,j)$ to $(p,p')$. So we establish the lemma in the case when $m=1$.

    Suppose the lemma holds for $1\leq m\leq k$. We will abuse notation slightly by fixing any $i\neq j$ again. If $K_i\cap K_j\neq\varnothing$ then by~\eqref{eq:finiteblack2}, there are $p,p'\in\{1,2,\ldots,n\}, e\in\E_{i,p},e'\in\E_{j,p'}$ such that $\vp_e=\vp_{e'}$ and $\vp_e(K_p)\cap\vp_{e'}(K_{p'})\neq\varnothing$. This means that there is a solid edge from $(i,j)$ to $(p,p')$ and $K_p\cap K_{p'}\neq\varnothing$ (since $\vp_e=\vp_{e'}$). Moreover, there are no terminated walks starting from $(p,p')$. By the induction hypothesis, we can find a solid walk in the intersection graph which starts from $(p,p')$ and has length $k$. Splicing the previous solid edge from $(i,j)$ to $(p,p')$, we obtain a solid walk that starts from $(i,j)$ and has length $k+1$. This completes the induction.
\end{proof}

\begin{lemma}\label{lem:mainthm1_2}
    Let $(i_0,j_0)\to(i_1,j_1)\to\cdots\to(i_m,j_m)$ be a solid walk of length $m$ in the intersection graph.
    \begin{enumerate}
        \item If $m\geq\frac{n^2-n}{2}$ then $K_{i_0}\cap K_{j_0}\neq\varnothing$;
        \item If $m\geq \frac{n^2-n}{2}+1$ then there is an infinite solid walk in the intersection graph which starts from $(i_0,j_0)$.
    \end{enumerate}
\end{lemma}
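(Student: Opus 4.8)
The plan is to combine three ingredients: the backward propagation of non-emptiness supplied by Lemma~\ref{lem:blackedge}, the symmetric property of the intersecting graph, and a pigeonhole count on \emph{unordered} off-diagonal pairs. First I would record two structural facts. Since solid (and dashed) edges are only issued from vertices $(i,j)$ with $i\neq j$, every vertex of a solid walk except possibly the terminal one must be off-diagonal; equivalently, a diagonal vertex can occur only at the very end. Moreover, by iterating Lemma~\ref{lem:blackedge} along the edges of a solid walk $(i_0,j_0)\to\cdots\to(i_m,j_m)$, the set $K_{i_0}\cap K_{j_0}$ contains a scaled copy of $K_{i_k}\cap K_{j_k}$ for every $k$; in particular $K_{i_k}\cap K_{j_k}\neq\varnothing$ forces $K_{i_0}\cap K_{j_0}\neq\varnothing$. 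Finally, the number of off-diagonal unordered pairs $\{i,j\}$ with $i,j\in\V_n$ is exactly $\binom{n}{2}=\frac{n^2-n}{2}$, which is precisely the threshold appearing in the statement.

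The heart of the argument is the following claim, which I expect to be the main obstacle: \emph{if some off-diagonal unordered pair repeats along a solid walk, then an infinite solid walk starts at the first occurrence.} Suppose $0\leq a<b$ and $\{i_a,j_a\}=\{i_b,j_b\}$ with both off-diagonal. If $(i_a,j_a)=(i_b,j_b)$ as ordered pairs, the segment between them is already a closed solid walk and may simply be repeated forever. The delicate case is the coordinate swap $(i_b,j_b)=(j_a,i_a)$. Here I would invoke the symmetric property: reflecting each edge $(i_k,j_k)\to(i_{k+1},j_{k+1})$ of the segment to its mirror $(j_k,i_k)\to(j_{k+1},i_{k+1})$ produces a second solid walk, running from $(j_a,i_a)=(i_b,j_b)$ to $(j_b,i_b)=(i_a,j_a)$. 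Concatenating the original segment with its mirror then yields a closed solid walk based at $(i_a,j_a)$, which can again be iterated indefinitely. In either case we obtain an infinite solid walk emanating from $(i_a,j_a)$, and hence $K_{i_a}\cap K_{j_a}\neq\varnothing$ by Proposition~\ref{prop:gds2}.

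With the claim in hand, both parts follow by pigeonhole together with backward propagation. For part (2), the hypothesis $m\geq\frac{n^2-n}{2}+1$ means the $m$ source vertices $(i_0,j_0),\ldots,(i_{m-1},j_{m-1})$ --- all off-diagonal --- outnumber the $\binom{n}{2}$ available unordered pairs, so some pair repeats; the claim then produces an infinite solid walk from an intermediate vertex $(i_a,j_a)$, and prepending the prefix $(i_0,j_0)\to\cdots\to(i_a,j_a)$ gives an infinite solid walk from $(i_0,j_0)$. For part (1), with $m\geq\frac{n^2-n}{2}$, I would split on the terminal vertex. If $(i_m,j_m)$ is diagonal, say $i_m=j_m=\ell$, then $K_{i_m}\cap K_{j_m}=K_\ell\neq\varnothing$ and backward propagation gives $K_{i_0}\cap K_{j_0}\neq\varnothing$ at once. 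Otherwise all $m+1\geq\frac{n^2-n}{2}+1$ vertices are off-diagonal; pigeonhole again forces a repeated unordered pair, the claim yields $K_{i_a}\cap K_{j_a}\neq\varnothing$ for some $a$, and backward propagation along the prefix delivers $K_{i_0}\cap K_{j_0}\neq\varnothing$. The only place genuine care is needed is the swap case of the claim, where the symmetric property of the intersecting graph is indispensable.
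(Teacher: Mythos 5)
Your proposal is correct and follows essentially the same route as the paper's proof: pigeonhole on the $\binom{n}{2}$ unordered off-diagonal pairs, the symmetric property of the intersecting graph to close up the loop in the coordinate-swap case, backward propagation via Lemma~\ref{lem:blackedge}, and Proposition~\ref{prop:gds2} to convert the resulting infinite solid walk into non-emptiness. The only cosmetic difference is that you split part (1) into the diagonal-terminal and off-diagonal-terminal cases up front and isolate the repetition argument as a separate claim, whereas the paper interleaves these, but the underlying argument is identical.
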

\begin{proof}
    Suppose $m\geq \frac{n^2-n}{2}$. Note that there is no edge in the intersection graph which has one of $(1,1),\ldots,(n,n)$ as its initial vertex. So $i_k\neq j_k$ for $1\leq k\leq m-1$. By Lemma~\ref{lem:blackedge}, $K_{i_0}\cap K_{j_0}$ contains a scaled copy of $K_{i_{m}}\cap K_{j_{m}}$. In particular, if $i_m=j_m$ then $K_{i_0}\cap K_{j_0}\neq\varnothing$. If $i_m\neq j_m$ then, since $|\{\{i,j\}: i\neq j\}|=\frac{n^2-n}{2}\leq m<m+1$, we can find $0\leq p<q\leq m$ with $\{i_p,j_p\}=\{i_q,j_q\}$.

    If $(i_q,j_q)=(i_p,j_p)$ then $(i_p,j_p)\to(i_{p+1},j_{p+1})\to\cdots\to(i_q,j_q)$ is a solid walk from $(i_p,j_p)$ to itself. If $(i_q,j_q)=(j_p,i_p)$ then we have by the ``symmetric property'' of the intersection graph (recall the remark after Definition~\ref{de:intersectgraph}) that
    \[
        (i_q,j_q) =(j_p,i_p) \to (j_{p+1},i_{p+1}) \to\cdots\to (j_{q-1},i_{q-1}) \to (j_q,i_q)=(i_p,j_p)
    \]
    is a solid walk. Therefore,
    \[
        (i_p,j_p)\to\cdots \to(i_q,j_q)\to (j_{p+1},i_{p+1}) \to\cdots\to (j_{q-1},i_{q-1}) \to (j_q,i_q)
    \]
    is a solid walk from $(i_p,j_p)$ to itself. So in both cases, we can easily obtain an infinite solid walk that starts from $(i_p,j_p)$ and hence another one from $(i_0,j_0)$. By Proposition~\ref{prop:gds2}, $K_{i_0}\cap K_{j_0}\neq\varnothing$.

    Suppose $m\geq\frac{n^2-n}{2}+1$. Then $m-1\geq\frac{n^2-n}{2}$ and $i_{m-1}\neq j_{m-1}$. Thus one can deduce by the same argument as above that there is an infinite solid walk from $(i_0,j_0)$.
\end{proof}

\begin{proof}[Proof of Theorem~\ref{thm:main1}]
    The sufficiency follows directly from Propositions~\ref{prop:gds1} and~\ref{prop:gds2}. 
    Now we prove the necessity. Suppose $K_i\cap K_j\neq\varnothing$ but in the intersection graph, there are no terminated finite walks starting at $(i,j)$. By Lemma~\ref{lem:mainthm1_1}, there exists a solid walk which starts from $(i,j)$ and has length $\frac{n^2-n}{2}+1$. Then Lemma~\ref{lem:mainthm1_2} immediately gives us an infinite solid walk starting from $(i,j)$.
\end{proof}


\subsection{An induction process to determine terminated edges}

Recall our previous remark that the drawing of the intersection graph for attractors in $\R^d$ depends on the solution of the problem in lower dimensional spaces $\R,\ldots,\R^{d-1}$. Toward this end, we need the following observation.

For any face $P$ of $[0,1]^d$ with $1\leq\dim P\leq d-1$, there is a $0$-$1$ vector $\alpha=(a_1,\ldots,a_s)$ and a sequence $1\leq m_1<\cdots<m_s \leq d$ such that \eqref{eq:P-def} holds, where $s=d-\dim P$. We denote by $h_P$ the natural projection from $\R^d$ to $\R^{\dim P}$ given by
\[
    h_P(x_1,\ldots,x_d) = (x_1,\ldots,x_{m_1-1},x_{m_1+1},\ldots,x_{m_s-1}, x_{m_s+1},\ldots,x_d).
\]

\begin{lemma}\label{lem:reducedim}
    Let $P$ be a face of $[0,1]^d$ with $1\leq \dim P\leq d-1$ and $P\cap \bigcup_{i=1}^n K_i\neq\varnothing$. Let $\Lambda_P:=\{i\in\V_n: K_i\cap P\neq\varnothing\}$. Then the tuple $(h_P(K_i\cap P))_{i\in\Lambda_P}$ can be regarded as a Cantor-type graph-directed attractor in $\R^{\dim P}$.
\end{lemma}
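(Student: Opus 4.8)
The plan is to intersect the defining relation $K_i=\bigcup_{j=1}^n\bigcup_{e\in\E_{i,j}}\vp_e(K_j)$ with the face $P$, push the intersection inside the union by means of Lemma~\ref{lem:face}, and then transport the resulting identity down to $\R^{\dim P}$ through the projection $h_P$. The upshot should be that, on the nose, $(h_P(K_i\cap P))_{i\in\Lambda_P}$ satisfies a graph-directed invariance relation whose maps are again of Cantor type, after which uniqueness of the attractor finishes the job.

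First I would fix $i\in\Lambda_P$ and write $K_i\cap P=\bigcup_{j=1}^n\bigcup_{e\in\E_{i,j}}\big(\vp_e(K_j)\cap P\big)$. Edges with $\vp_e([0,1]^d)\cap P=\varnothing$ contribute nothing, since $\vp_e(K_j)\subset\vp_e([0,1]^d)$. For an edge with $\vp_e([0,1]^d)\cap P\neq\varnothing$, the non-empty case of Lemma~\ref{lem:face} applied with $E=[0,1]^d$ gives $\vp_e(P)=P\cap\vp_e([0,1]^d)\subset P$; combining the always-valid inclusion $P\cap\vp_e(K_j)\subset\vp_e(K_j\cap P)$ with $\vp_e(K_j\cap P)\subset\vp_e(P)\cap\vp_e(K_j)\subset P\cap\vp_e(K_j)$ then yields the identity $\vp_e(K_j)\cap P=\vp_e(K_j\cap P)$. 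Writing $\E_{i,j}^P:=\{e\in\E_{i,j}:\vp_e([0,1]^d)\cap P\neq\varnothing\}$ and discarding the vanishing terms with $K_j\cap P=\varnothing$ (that is, $j\notin\Lambda_P$), I obtain the reduced identity $K_i\cap P=\bigcup_{j\in\Lambda_P}\bigcup_{e\in\E_{i,j}^P}\vp_e(K_j\cap P)$ for every $i\in\Lambda_P$.

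Next I would apply $h_P$, the key point being that each $\vp_e$ with $e\in\E_{i,j}^P$ intertwines with a Cantor type similarity of $\R^{\dim P}$. Writing $P=\{x:x_{m_k}=a_k,\,1\leq k\leq s\}$ as in~\eqref{eq:P-def}, the computation in the proof of Lemma~\ref{lem:face} shows that $\vp_e(P)\subset P$ forces the coordinates $t_{m_1},\ldots,t_{m_s}$ of $t_e$ to be the boundary values ($0$ when $a_k=0$ and $N-1$ when $a_k=1$), and these are exactly the coordinates suppressed by $h_P$. Hence, setting $\tilde t_e:=h_P(t_e)\in\{0,1,\ldots,N-1\}^{\dim P}$ and $\psi_e(y):=N^{-1}(y+\tilde t_e)$, a coordinatewise check gives $h_P\circ\vp_e=\psi_e\circ h_P$ on $P$. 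Since $h_P$ restricts to a bijection from $P$ onto $[0,1]^{\dim P}$ and commutes with unions, applying it to the reduced identity produces $h_P(K_i\cap P)=\bigcup_{j\in\Lambda_P}\bigcup_{e\in\E_{i,j}^P}\psi_e\big(h_P(K_j\cap P)\big)$ for all $i\in\Lambda_P$.

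Finally I would read this as a graph-directed relation in $\R^{\dim P}$: the vertex set is $\Lambda_P$, the edges are the sets $\E_{i,j}^P$ joining members of $\Lambda_P$, and the IFS is $\{\psi_e\}$, each $\psi_e$ being of Cantor type with the same integer $N$. Every component $h_P(K_i\cap P)$ with $i\in\Lambda_P$ is a non-empty compact set, and each vertex of the new graph emits at least one edge (otherwise the corresponding component would be empty), so the uniqueness of graph-directed attractors identifies $(h_P(K_i\cap P))_{i\in\Lambda_P}$ as the attractor of this system. I expect the main obstacle to lie in the first step: one must pin down precisely which edges survive the intersection with $P$ and justify the set identity $\vp_e(K_j)\cap P=\vp_e(K_j\cap P)$, for which the containment $\vp_e(P)\subset P$ (coming from the non-empty case of Lemma~\ref{lem:face}) is essential. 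Once this is secured, the descent through $h_P$ and the appeal to uniqueness are routine.
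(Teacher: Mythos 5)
Your proposal is correct and follows essentially the same route as the paper: intersect the defining relation with $P$, use Lemma~\ref{lem:face} to turn $\vp_e(K_j)\cap P$ into $\vp_e(K_j\cap P)$ for the surviving edges, and push everything down via the intertwining $h_P\circ\vp_e=\psi_e\circ h_P$ to read off the Cantor type graph-directed structure on the vertex set $\Lambda_P$. The only cosmetic differences are your choice to define $\E_{i,j}^P$ by $\vp_e([0,1]^d)\cap P\neq\varnothing$ rather than $\vp_e(K_j)\cap P\neq\varnothing$ (the two agree once $j$ is restricted to $\Lambda_P$) and your explicit closing appeal to uniqueness of the attractor, which the paper leaves implicit.
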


\begin{proof}
    Note that for every $e\in\E$ and $(x_1,\ldots,x_d)\in\R^d$,
    \begin{align*}
        h_P\circ\vp_e(x_1,\ldots,x_d) &= h_P\Big( \frac{(x_1,\ldots,x_d)+t_e}{N} \Big) \\
        &= \frac{h_P((x_1,\ldots,x_d))+h_P(t_e)}{N} = \psi_e(h_P(x_1,\ldots,x_d)),
    \end{align*}
    where $\psi_e:y\mapsto N^{-1}(y+h_P(t_e))$. Thus $h_P\circ\vp_e=\psi_e\circ h_P$ and clearly $h_P(t_e)\in\{0,1,\ldots,N-1\}^{\dim P}$. So $\{\psi_e:e\in\E\}$ is of Cantor-type. Writing $\E_{i,j}^P:=\{e\in\E_{i,j}: \vp_e(K_j)\cap P\neq\varnothing\}$ and $K_i^P:=h_P(K_i\cap P)$, we have for all $i\in\Lambda_P$ that
    \begin{equation}\label{eq:kipneweq}
        K_i^P = h_P(K_i\cap P) = h_P\Big( \bigcup_{j=1}^n\bigcup_{e\in\E_{i,j}} \vp_e(K_j)\cap P \Big) = h_P\Big( \bigcup_{j=1}^n\bigcup_{e\in\E_{i,j}^P} \vp_e(K_j)\cap P \Big).
    \end{equation}
    Note that $\E_{i,j}^P\neq\varnothing$ implies that $j\in\Lambda_P$: If there is $e\in\E_{i,j}^P$ then we have by Lemma~\ref{lem:face} that 
    \[
        \vp_e(K_j\cap P) = \vp_e(K_j) \cap P \neq\varnothing \Longrightarrow K_j\cap P\neq\varnothing.
    \]
    Combining this with Lemma~\ref{lem:face} and~\eqref{eq:kipneweq},
    \begin{align*}
        K_i^P &= h_P\Big( \bigcup_{j\in\Lambda_P}\bigcup_{e\in\E_{i,j}^P} \vp_e(K_j)\cap P \Big) \\
        &= h_P\Big( \bigcup_{j\in\Lambda_P}\bigcup_{e\in\E_{i,j}^P} \vp_e(K_j\cap P) \Big) \\
        &= \bigcup_{j\in\Lambda_P}\bigcup_{e\in\E_{i,j}^P} h_P\circ\vp_e (K_j\cap P) \\
        &= \bigcup_{j\in\Lambda_P}\bigcup_{e\in\E_{i,j}^P} \psi_e\circ h_P(K_j\cap P) = \bigcup_{j\in\Lambda_P}\bigcup_{e\in\E_{i,j}^P} \psi_e(K^P_j).
    \end{align*}
    Thus the tuple $(K^P_i)_{i\in\Lambda_P}$ can be regarded as a Cantor-type attractor in $\R^{\dim P}$ of which the associated graph-directed system is as follows:
    \begin{enumerate}
        \item The vertex set of the directed graph is $\Lambda_P$;
        \item The edge set is $\bigcup_{i,j\in\Lambda_P} \E_{i,j}^P$ and the IFS is $\{\psi_e:e\in\bigcup_{i,j\in\Lambda_P} \E_{i,j}^P\}$.
    \end{enumerate}
\end{proof}

\begin{corollary}\label{cor:reducedim}
    Let $P_1,\ldots,P_m$ be faces of $[0,1]^d$ with $1\leq \dim P_1=\cdots=\dim P_m\leq d-1$, and let $\Lambda_{P_k}:=\{1\leq i\leq n: K_i\cap P_k\neq\varnothing\}$ for $1\leq k\leq m$. Then the tuple $(h_{P_k}(K_t\cap P_k))_{1\leq k\leq m,t\in\Lambda_{P_k}}$ can be regarded as a Cantor-type graph-directed attractor in $\R^{\dim P_1}$.
\end{corollary}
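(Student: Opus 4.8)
The plan is to reduce everything to Lemma~\ref{lem:reducedim} applied one face at a time and then to glue the resulting systems together into a single graph-directed system. First I would discard every index $k$ for which $\Lambda_{P_k}=\varnothing$, since such a face contributes no component to the tuple; after this reduction we may assume $P_k\cap\bigcup_{i=1}^n K_i\neq\varnothing$ for each remaining $k$, which is precisely the hypothesis under which Lemma~\ref{lem:reducedim} is applicable.

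For each such $k$, Lemma~\ref{lem:reducedim} produces a Cantor type graph-directed attractor $(h_{P_k}(K_t\cap P_k))_{t\in\Lambda_{P_k}}$ in $\R^{\dim P_k}$, whose directed graph has vertex set $\Lambda_{P_k}$, edge set $\bigcup_{i,j\in\Lambda_{P_k}}\E_{i,j}^{P_k}$, and IFS $\{\psi_e^{P_k}\}$ with $\psi_e^{P_k}(y)=N^{-1}(y+h_{P_k}(t_e))$. The crucial point is that the common value $\dim P_1=\dim P_2=\cdots=\dim P_m$ forces all these attractors to live in one and the same space $\R^{\dim P_1}$, and they all share the same contraction ratio $N^{-1}$ with the integer $N\geq 2$ inherited from the original system. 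I would also record that in each block every vertex has an outgoing edge, a property already furnished by the proof of Lemma~\ref{lem:reducedim}.

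Next I would form the disjoint union of these blocks. Concretely, I relabel the vertices as pairs $(k,t)$ with $1\leq k\leq m$ and $t\in\Lambda_{P_k}$ (exactly the indexing in the statement), so that vertex sets coming from different faces stay distinct even when the same original index $t$ reappears, and I keep all edges and maps within their blocks so that no edge crosses between blocks. Since every vertex still has an outgoing edge and every $\psi_e^{P_k}$ has the form required by Definition~\ref{de:gds}, the union is again a legitimate Cantor type graph-directed system in $\R^{\dim P_1}$. Its defining fixed-point equation decouples across the blocks, so by the uniqueness of the graph-directed attractor (the existence--uniqueness result quoted in Section~1) the attractor of the union is exactly the concatenation of the individual attractors, namely $(h_{P_k}(K_t\cap P_k))_{1\leq k\leq m,\,t\in\Lambda_{P_k}}$.

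There is no substantial obstacle here; the content is essentially bookkeeping. The only points that deserve a moment's care are that faces with $\Lambda_{P_k}=\varnothing$ must be excluded so that Lemma~\ref{lem:reducedim} applies, and that the equality of dimensions is genuinely used---without it the blocks would sit in different ambient spaces and could not be assembled into a single attractor. Once the vertices are relabelled to be disjoint, the verification that a disjoint union of valid Cantor type systems is again one, and that its attractor is the concatenation, is immediate from uniqueness.
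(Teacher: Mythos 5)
Your proof is correct and takes the approach the paper intends: the corollary is stated without proof as an immediate consequence of Lemma~\ref{lem:reducedim}, and your argument---apply the lemma face by face (after discarding any $k$ with $\Lambda_{P_k}=\varnothing$) and assemble the blocks into one system via a disjoint union of relabelled vertex sets, which is legitimate precisely because all blocks share the ambient space $\R^{\dim P_1}$ and the common ratio $N^{-1}$---is exactly the bookkeeping that justifies it.
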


Now we are able to determine dashed terminated edges in the intersection graph. Recall that a dashed terminated edge from $(i,j)$ (where $i\neq j$) to some $(i',j')$ implies that there are $e\in\E_{i,i'}$, $e'\in\E_{j,j'}$ such that $\vp_e([0,1]^d)\cap\vp_{e'}([0,1]^d)$ is a common $s$-dimensional face of these two cubes with $0\leq s\leq d-1$ and $\vp_e(K_{i'})\cap\vp_{e'}(K_{j'})\neq\varnothing$. The last condition $\vp_e(K_{i'})\cap\vp_{e'}(K_{j'})\neq\varnothing$ can be checked by an induction process as follows.

When $d=1$, $s$ must be zero and $\vp_e([0,1])\cap\vp_{e'}([0,1])$ is just the common endpoint of these two intervals. So to determine whether $\vp_e(K_{i'})\cap\vp_{e'}(K_{j'})\neq\varnothing$, it suffices to figure out whether endpoints of $[0,1]$ are elements of $K_{i'}$ and $K_{j'}$. This task has already been done by Corollary~\ref{cor:01}. So we know exactly which dashed edges are terminated and thus how to draw the intersection graph for attractors in $\R$. Combining this with Theorem~\ref{thm:main1}, we solve the intersection problem of Cantor-type attractors in $\R$.

When $d=2$, $s$ can take values $0$ or $1$. If $s=0$ then $\vp_e([0,1]^2)\cap\vp_{e'}([0,1]^2)$ is just the common vertex of these two squares. So to determine whether $\vp_e(K_{i'})\cap\vp_{e'}(K_{j'})\neq\varnothing$, it suffices to find out whether vertices of $[0,1]^2$ are elements of $K_{i'}$ and $K_{j'}$. Again, this can be checked by Corollary~\ref{cor:01}. If $s=1$, $\vp_e(K_{i'})\cap\vp_{e'}(K_{j'})$ lives on a common $1$-dimensional face of $\vp_e([0,1]^2)$ and $\vp_{e'}([0,1]^2)$. Thus there are $1$-dimensional faces $P,Q$ of $[0,1]^2$ such that $\vp_e(K_{i'})\cap\vp_{e'}(K_{j'})=\vp_e(K_{i'}\cap P) \cap \vp_{e'}(K_{j'}\cap Q)$. Note that $Q$ is a translation of $P$ and
\[
    \vp_e(K_{i'}\cap P) \cap \vp_{e'}(K_{j'}\cap Q)\neq\varnothing \Longleftrightarrow h_P(K_{i'}\cap P) \cap h_Q(K_{j'}\cap Q) \neq\varnothing.
\] 
If $K_{i'}\cap P=\varnothing$ or $K_{j'}\cap Q=\varnothing$ (which can be checked by Corollary~\ref{cor:01}) then we are done. When both of these intersections are non-empty, it follows from Corollary~\ref{cor:reducedim} that the problem is reduced to the intersection problem of the Cantor-type attractor $(h_P(K_i\cap P),h_Q(K_j\cap Q))_{i\in\Lambda_P,j\in\Lambda_Q}$, where $\Lambda_P:=\{i:K_i\cap P\neq\varnothing\}$ and $\Lambda_Q:=\{i:K_i\cap Q\neq\varnothing\}$. This is an attractor in $\R$ because $\dim P=\dim Q=1$. Note that in the last paragraph, we have solved the intersection problem in $\R$. In particular, we can determine whether $h_P(K_{i'}\cap P)\cap h_Q(K_{j'}\cap Q)\neq\varnothing$ and thus whether $\vp_e(K_{i'})\cap\vp_{e'}(K_{j'})\neq\varnothing$. Now we know which dashed edges are terminated and how to build the intersection graph for attractors in $\R^2$. Again, combining this with Theorem~\ref{thm:main1}, we solve the intersection problem of Cantor-type attractors in $\R^2$.

Continuing in this manner, the intersection problem of Cantor-type attractors in $\R^d$ for all dimensions $d\geq 1$ is settled.

\section{The intersection problem II: The finite-iteration approach}

This section is devoted mainly to the proof of Theorem~\ref{thm:main2}. The method used in the following result is similar to Lemma~\ref{lem:mainthm1_2}.

\begin{lemma}\label{lem:01finite}
    Let $\alpha$ be a vertex of $[0,1]^d$ and let $i\in\V_n$. If there is a walk in the graph $G_{\{\alpha\}}$ which starts from $i$ and has length $n$, then $\alpha\in K_i$.
\end{lemma}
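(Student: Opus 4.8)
The plan is to reduce this to Corollary~\ref{cor:01}, which already tells us that $\alpha\in K_i$ (that is, $\{\alpha\}\cap K_i\neq\varnothing$, viewing $\{\alpha\}$ as a $0$-dimensional face $P$) if and only if there are arbitrarily long walks in $G_{\{\alpha\}}$ starting from $i$. So it suffices to upgrade a single walk of length $n$ into walks of unbounded length, and this is a pigeonhole-plus-pumping argument driven by the fact that $G_{\{\alpha\}}$ has only $n$ vertices (its vertex set being $\V_n$).

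First I would write the given walk as $i=v_0\to v_1\to\cdots\to v_n$, which visits the $n+1$ vertices $v_0,v_1,\ldots,v_n$. Since $G_{\{\alpha\}}$ has exactly $n$ vertices, the pigeonhole principle forces two of these to coincide, say $v_p=v_q$ with $0\leq p<q\leq n$. Consequently $v_p\to v_{p+1}\to\cdots\to v_q=v_p$ is a closed walk (a cycle) of positive length $q-p$ based at $v_p$.

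Next I would pump this cycle. Concatenating the initial segment $v_0\to\cdots\to v_p$ with $k$ consecutive traversals of the cycle produces, for every $k\geq 0$, a legitimate walk in $G_{\{\alpha\}}$ starting from $i$ and having length $p+k(q-p)$. As $k\to\infty$ these lengths are unbounded, so there exist arbitrarily long walks in $G_{\{\alpha\}}$ beginning at $i$. Applying Corollary~\ref{cor:01} with the $0$-dimensional face $P=\{\alpha\}$ then yields $\alpha\in K_i$, completing the proof.

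There is no genuine obstacle here; the argument is entirely combinatorial and parallels Lemma~\ref{lem:mainthm1_2}, only simpler, because $G_{\{\alpha\}}$ carries $n$ vertices rather than the $n^2$ (or the relevant off-diagonal) vertices of the intersecting graph, so a walk of length $n$—rather than length on the order of $\frac{n^2-n}{2}$—already suffices to force a repeated vertex. The only point requiring minor care is to state the pumping cleanly so that the produced walks are genuinely unbounded in length, after which Corollary~\ref{cor:01} does the remaining work.
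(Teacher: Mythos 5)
Your argument is correct and is essentially the paper's own proof: both use pigeonhole on the $n+1$ visited vertices of the length-$n$ walk to find a repeated vertex, pump the resulting cycle to get arbitrarily long walks from $i$, and conclude $\alpha\in K_i$. The only cosmetic difference is that you invoke Corollary~\ref{cor:01} while the paper cites Lemma~\ref{lem:0ink1} together with $K_i=\bigcap_k Q_{i,k}$ directly, which amounts to the same thing.
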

\begin{proof}
    Let $i\to i_1\to\cdots\to i_n$ be such a walk. For simplicity, set $i_0=i$. Since there are only $n$ vertices in $G_{\{\alpha\}}$, we can find $0\leq p<q\leq n$ such that $i_p=i_q$. So there is a cycle at $i_p$ and hence an infinite walk starting from $i$. By Lemma~\ref{lem:0ink1}, $\alpha\in Q_{i,k}$ for all $k$ and thus $\alpha\in K_i$.
\end{proof}

\begin{corollary}\label{cor:01finite}
    Let $\alpha$ be a vertex of $[0,1]^d$. For $i\in\V_n$, $\alpha\in K_i$ if and only if $\alpha\in Q_{i,n}$.
\end{corollary}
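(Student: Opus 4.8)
The plan is to treat the single point $\alpha$ as the $0$-dimensional face $P=\{\alpha\}$ of $[0,1]^d$, which is exactly the setting covered by Lemmas~\ref{lem:0ink1}, \ref{lem:0ink2} and~\ref{lem:01finite}. The equivalence then splits into two implications, one of which is trivial and the other of which reduces directly to tools already established.

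For the forward direction I would simply invoke Lemma~\ref{lem:approximation}: since $\{Q_{i,k}\}_{k\geq 1}$ is a decreasing sequence of compact sets with $K_i=\bigcap_{k=1}^\infty Q_{i,k}$, we have in particular $K_i\subset Q_{i,n}$. Hence $\alpha\in K_i$ immediately forces $\alpha\in Q_{i,n}$, and no further work is needed.

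The backward direction carries the content, but it too is assembled from results in hand. Assuming $\alpha\in Q_{i,n}$, that is $\{\alpha\}\cap Q_{i,n}\neq\varnothing$, Lemma~\ref{lem:0ink2} (applied with $P=\{\alpha\}$ and $k=n$) produces a walk of length $n$ in the auxiliary graph $G_{\{\alpha\}}$ that starts from $i$. Feeding this walk into Lemma~\ref{lem:01finite} yields $\alpha\in K_i$, which is precisely the desired conclusion.

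The only substantive ingredient is the one already isolated in Lemma~\ref{lem:01finite}, namely the pigeonhole observation that a walk of length $n$ in a graph on $n$ vertices must revisit some vertex, thereby creating a cycle and hence arbitrarily long walks from $i$; Lemma~\ref{lem:0ink1} then places $\alpha$ in every $Q_{i,k}$, so that $\alpha\in\bigcap_{k=1}^\infty Q_{i,k}=K_i$. I therefore expect no genuine obstacle here: the corollary is essentially a bookkeeping statement packaging the finite bound $n$ (the number of vertices of $G_{\{\alpha\}}$) together with the nesting of the geometric approximations, and its role is to convert the asymptotic criterion of Corollary~\ref{cor:01} into a check on a single finite level.
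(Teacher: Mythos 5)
Your proof is correct and follows the paper's own argument exactly: the forward direction is the trivial inclusion $K_i\subset Q_{i,n}$, and the backward direction applies Lemma~\ref{lem:0ink2} with $P=\{\alpha\}$ to produce a length-$n$ walk in $G_{\{\alpha\}}$ and then invokes Lemma~\ref{lem:01finite}. No issues.
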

\begin{proof}
    Since $K_i\subset Q_{i,n}$, it suffices to show the sufficiency. Assume that $\alpha\in Q_{i,n}$. Then it follows from Lemma~\ref{lem:0ink2} that there exists a walk in the graph $G_{\{\alpha\}}$ which starts from $i$ and has length $n$. Combining this with Lemma~\ref{lem:01finite}, we see that $\alpha\in K_i$.
\end{proof}

The following are direct results of Lemma~\ref{lem:lengthkwalk}.

\begin{corollary}\label{cor:inki}
    If $i\xrightarrow{e_1}i_1\xrightarrow{e_2}i_2\to\cdots$ is an infinite walk in $G$ then
    \[
       \bigcap_{m=1}^\infty \vp_{e_1}\circ\cdots\circ\vp_{e_m}([0,1]^d) \subset K_i.
    \]
\end{corollary}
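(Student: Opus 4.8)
The plan is to exploit that each finite initial segment of the given infinite walk is itself a length-$m$ walk starting at $i$, so that Lemma~\ref{lem:lengthkwalk}, specialized to the parameter $t=0$, immediately places the corresponding composition of maps inside the level-$m$ approximation $Q_{i,m}$.

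First I would fix $m\geq 1$ and observe that the initial segment $\mathbf{e}^{(m)}:=(e_1,\ldots,e_m)$ belongs to $\E_i^m$, the set of walks of length $m$ in $G$ starting from $i$, with $\vp_{\mathbf{e}^{(m)}}=\vp_{e_1}\circ\cdots\circ\vp_{e_m}$ and $\ter(\mathbf{e}^{(m)})=i_m$. Applying Lemma~\ref{lem:lengthkwalk} with $t=0$ and recalling that $Q_{\ter(\mathbf{e}),0}=[0,1]^d$, one obtains $Q_{i,m}=\bigcup_{\mathbf{e}\in\E_i^m}\vp_{\mathbf{e}}([0,1]^d)$. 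Since $\mathbf{e}^{(m)}$ is one of the indices appearing in this union,
\[
    \vp_{e_1}\circ\cdots\circ\vp_{e_m}([0,1]^d) \subset Q_{i,m}.
\]

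The conclusion then follows by intersecting over $m$: any point $x$ lying in $\bigcap_{m=1}^\infty \vp_{e_1}\circ\cdots\circ\vp_{e_m}([0,1]^d)$ satisfies $x\in Q_{i,m}$ for every $m$ by the display above, hence $x\in\bigcap_{m=1}^\infty Q_{i,m}$, which equals $K_i$ by Lemma~\ref{lem:approximation}.

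There is essentially no hard part here; the statement is a one-line corollary once the correct specialization ($t=0$) of Lemma~\ref{lem:lengthkwalk} is invoked. The only point deserving care is the bookkeeping that the initial segment $(e_1,\ldots,e_m)$ is a legitimate member of $\E_i^m$ (which holds because consecutive edges of the infinite walk are composable by construction) and that its terminal vertex is $i_m$, so that $Q_{\ter(\mathbf{e}^{(m)}),0}$ is indeed the full unit cube. I might also note, exactly as in the proof of Proposition~\ref{prop:gds2}, that since $\vp_{e_{m+1}}([0,1]^d)\subset[0,1]^d$ the sets $\vp_{e_1}\circ\cdots\circ\vp_{e_m}([0,1]^d)$ are nested and shrink to a single point, so the left-hand intersection is in fact a singleton; but this refinement is not needed for the stated inclusion.
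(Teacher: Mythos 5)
Your proposal is correct and follows essentially the same route as the paper: apply Lemma~\ref{lem:lengthkwalk} to each initial segment of the walk to get $\vp_{e_1}\circ\cdots\circ\vp_{e_m}([0,1]^d)\subset Q_{i,m}$, then intersect over $m$ and invoke $\bigcap_m Q_{i,m}=K_i$. Your observation that the nestedness of the cubes is not actually needed for the stated inclusion (term-by-term containment suffices) is a minor but accurate streamlining of the paper's wording.
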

\begin{proof}
    For $m\geq 1$, $i\xrightarrow{e_1}i_1\xrightarrow{e_2}\cdots\xrightarrow{e_m}i_m$ is a walk of length $m$ starting from $i$. It then follows from Lemma~\ref{lem:lengthkwalk} that $\vp_{e_1}\circ\cdots\circ\vp_{e_m}([0,1]^d) \subset Q_{i,m}$. 
    It is not hard to see that $\{\vp_{e_1}\circ\cdots\circ\vp_{e_m}([0,1]^d)\}_{m=1}^\infty$ is a nested sequence and hence
    \[
       \bigcap_{m=1}^\infty \vp_{e_1}\circ\cdots\circ\vp_{e_m}([0,1]^d) \subset \bigcap_{m=1}^\infty Q_{i,m}=K_i.
    \]
\end{proof}

\begin{corollary}\label{cor:levelmsolidwalk}
    Let $m\geq 1$. For every pair of distinct $i,j\in\V_n$, if there is a level-$m$ cube $I\subset Q_{i,m}\cap Q_{j,m}$, then we can find a solid walk in the intersection graph which starts from $(i,j)$ and has length $m$.
\end{corollary}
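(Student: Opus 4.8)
The plan is to convert the hypothesis into two length-$m$ walks in $G$, one issued from $i$ and one from $j$, that trace out the \emph{same} cube $I$, and then to show these walks agree map-by-map so that pairing them level by level produces the sought solid walk.

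First I would pin down $I$ inside each approximation. By Lemma~\ref{lem:lengthkwalk}, $Q_{i,m}=\bigcup_{\mathbf{e}\in\E_i^m}\vp_{\mathbf{e}}([0,1]^d)$ is a finite union of level-$m$ cubes, each of which is a cell of the standard grid of side length $N^{-m}$ recorded after that lemma. Since $I$ is itself such a grid cell and $I\subset Q_{i,m}$, and distinct grid cells have disjoint interiors, $I$ must coincide with one of the constituent cubes; thus there is a walk $\mathbf{e}=(e_1,\dots,e_m)\in\E_i^m$, say $i=i_0\xrightarrow{e_1}i_1\xrightarrow{e_2}\cdots\xrightarrow{e_m}i_m$, with $I=\vp_{\mathbf{e}}([0,1]^d)$. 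Running the same argument on $Q_{j,m}$ yields a walk $\mathbf{e}'=(e_1',\dots,e_m')\in\E_j^m$, say $j=j_0\xrightarrow{e_1'}j_1\xrightarrow{e_2'}\cdots\xrightarrow{e_m'}j_m$, with $I=\vp_{\mathbf{e}'}([0,1]^d)$.

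The crux is to show $\vp_{e_k}=\vp_{e_k'}$ for every $k$. Because the maps are of Cantor type (translations composed with the scaling $x\mapsto x/N$, with no rotational part), a direct computation gives
\[
    \vp_{\mathbf{e}}(x)=N^{-m}\Big(x+\sum_{k=1}^m N^{m-k}t_{e_k}\Big),
\]
and likewise for $\vp_{\mathbf{e}'}$. Since $\vp_{\mathbf{e}}([0,1]^d)=I=\vp_{\mathbf{e}'}([0,1]^d)$ forces the two translation vectors to agree, we obtain $\sum_{k=1}^m N^{m-k}t_{e_k}=\sum_{k=1}^m N^{m-k}t_{e_k'}$. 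As every $t_{e_k},t_{e_k'}$ lies in $\{0,1,\dots,N-1\}^d$, reading each coordinate as a base-$N$ expansion with digits $t_{e_1},\dots,t_{e_m}$ and invoking the uniqueness of such expansions gives $t_{e_k}=t_{e_k'}$, hence $\vp_{e_k}=\vp_{e_k'}$, for all $1\le k\le m$. This is exactly where the Cantor-type hypothesis is indispensable, since a rotational component would destroy the digit-by-digit uniqueness.

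Finally I would assemble the walk. For each $k$ we have $e_k\in\E_{i_{k-1},i_k}$ and $e_k'\in\E_{j_{k-1},j_k}$ with $\vp_{e_k}=\vp_{e_k'}$, which is precisely the condition in Definition~\ref{de:intersectgraph} for a solid edge from $(i_{k-1},j_{k-1})$ to $(i_k,j_k)$; chaining these gives $(i,j)=(i_0,j_0)\to(i_1,j_1)\to\cdots\to(i_m,j_m)$. I expect the main obstacle to lie here rather than in the map-matching: since the intersecting graph carries no edge out of a diagonal vertex $(v,v)$, this chain is a genuine length-$m$ solid walk only when $i_k\neq j_k$ for $0\le k\le m-1$, so that only the terminal pair may be diagonal. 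Establishing that the paired walks stay off the diagonal until the terminal step (or otherwise handling the case where they meet a common vertex earlier) is the delicate part of the argument; by contrast, the grid-cell identification of the first step and the base-$N$ matching of the crux step are both routine once the Cantor-type structure is exploited.
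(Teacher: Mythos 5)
Your core argument is the paper's argument, only packaged differently: the paper proves the corollary by induction on $m$, peeling off the first edge (the two level-$1$ cubes containing $I$ must coincide, forcing $\vp_{e_1}=\vp_{e'_1}$) and recursing on the level-$(m-1)$ cube $\vp_{e_1}^{-1}(I)\subset Q_{i_1,m-1}\cap Q_{j_1,m-1}$, whereas you match all $m$ digits at once via uniqueness of base-$N$ expansions. These are equivalent in substance, and your identification of $I$ with a constituent cube of $Q_{i,m}$ and of $Q_{j,m}$, as well as the formula for $\vp_{\mathbf{e}}$, are both fine.

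The obstacle you flag at the end is genuine, and it is worth noting that the paper's own proof does not escape it: in the inductive step the paper invokes the induction hypothesis at the pair $(i_1,j_1)$ without checking $i_1\neq j_1$, which is exactly the point where your chain could stall, since Definition~\ref{de:intersectgraph} issues no edges out of diagonal vertices. The statement can in fact fail as written: the standing assumption only forbids two edges within the same $\E_{i,j}$ from sharing a map, so a vertex $v$ may carry edges $e\in\E_{v,a}$ and $e'\in\E_{v,b}$ with $a\neq b$ and $\vp_e=\vp_{e'}$, and one can then build a Cantor type system in which the two paired walks meet at $(v,v)$ at some step $k<m$ while the longest solid walk from $(i,j)$ has length $k$. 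So neither your argument nor the paper's proves the corollary exactly as stated. The harmless repair is to weaken the conclusion to ``either a solid walk of length $m$, or a solid walk of length $k\le m$ ending at a diagonal vertex'': taking $k$ to be the first time the chain hits the diagonal, all earlier pairs are off-diagonal, so the truncated chain is a legitimate walk, and its last edge is terminated in the sense of Definition~\ref{de:terminated}, whence $K_i\cap K_j\neq\varnothing$ by Proposition~\ref{prop:gds1}. Since the corollary is used only in Case II of the proof of Theorem~\ref{thm:main2} to deduce $K_i\cap K_j\neq\varnothing$ (via Lemma~\ref{lem:mainthm1_2}), the weakened conclusion suffices there. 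In short: your proof coincides with the paper's, and the ``delicate part'' you isolate is a real gap common to both, closed immediately by the amended conclusion.
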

\begin{proof}
    We shall prove this by induction. When $m=1$, let $w\in\E^1_i$, $w'\in\E_j^1$ be such that $\vp_w([0,1]^d)=\vp_{w'}([0,1]^d)$. So $\vp_w=\vp_{w'}$ and hence there is a solid edge in the intersection graph which starts from $(i,j)$.

    Suppose the statement holds for $1\leq m\leq k$. When $m=k+1$, let $i\xrightarrow{e_1} i_1\xrightarrow{e_2} i_2\to\cdots\to i_{k+1} \in\E_i^{k+1}$ and $j\xrightarrow{e'_1} j_1\xrightarrow{e'_2} j_2\to\cdots\to j_{k+1} \in\E_j^{k+1}$ be such that 
    \[
        I = \vp_{e_1}\circ\cdots\circ\vp_{e_{k+1}}([0,1]^d) = \vp_{e'_1}\circ\cdots\circ\vp_{e'_{k+1}}([0,1]^d).
    \]
    This implies that $\vp_{e_1}=\vp_{e'_1}$, so there is a solid edge from $(i,j)$ to $(i_1,j_1)$. Moreover, 
    \[
        \vp_{e_1}^{-1}(I) = \vp_{e_2}\circ\cdots\circ\vp_{e_{k+1}}([0,1]^d) = \vp_{e'_2}\circ\cdots\circ\vp_{e'_{k+1}}([0,1]^d) 
    \]
    is a level-$k$ cube contained in $Q_{i_1,k}\cap Q_{j_1,k}$. As a consequence, we can find by the induction hypothesis a solid walk in the intersection graph which starts from $(i_1,j_1)$ and has length $k$. Splicing the previous solid edge from $(i,j)$ to $(i_1,j_1)$, we obtain a solid walk which starts from $(i,j)$ and has length $k+1$. This completes the induction.
\end{proof}

We need another fact for technical reasons.

\begin{lemma}\label{lem:equalspandq}
    Let $\vp(x)=N^{-a}(x+t)$, $\vp_*(x)=N^{-a}(x+t_*)$, $f(x)=N^{-b}(x+w)$, $f_*(x)=N^{-b}(x+w_*)$ be contracting maps on $\R^d$ such that $a,b\in\Z^+$, $t,t_*,w,w_*\in\R^d$ and all of the four mappings send $[0,1]^d$ into itself. If $P,Q$ are nonempty subsets of $[0,1]^d$ such that $\vp(P)=\vp_*(Q)$ and $\vp\circ f(P)=\vp_*\circ f_*(Q)$, then
    \[
        \vp\circ f^m(P)=\vp_*\circ f^m_*(Q), \quad m\geq 1.
    \]
\end{lemma}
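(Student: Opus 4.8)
The plan is to exploit the fact that all four maps act \emph{coordinatewise} (they scale every coordinate by a fixed power of $N$ and translate), so that every set in sight is an axis-parallel box and the problem decouples across the $d$ coordinates. First I would record that a face of $[0,1]^d$ is a product $\prod_{k=1}^d I_k$ in which each factor is $\{0\}$, $\{1\}$, or all of $[0,1]$. Say $P$ is degenerate exactly in the coordinate set $S$, and write $c_k$ for the value of the $k$-th coordinate at the lower corner of $P$, so that $c_k\in\{0,1\}$ when $k\in S$ and (by convention) $c_k=0$ when $k\notin S$; let $S^*$ and $c_k^*$ denote the analogous data for $Q$. Because $\vp,\vp_*,f,f_*$ act on the $k$-th coordinate alone with \emph{positive} scaling factor, the images $\vp\circ f^m(P)$ and $\vp_*\circ f_*^m(Q)$ are again axis-parallel boxes, degenerate precisely in $S$ and $S^*$ respectively (independently of $m$), with every non-degenerate side length equal to the common value $N^{-a-mb}$.

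Next I would compute $f^m$ in closed form. An easy induction gives $f^m(x)=N^{-mb}\big(x+w\Sigma_m\big)$ with $\Sigma_m:=1+N^b+\cdots+N^{(m-1)b}=\tfrac{N^{mb}-1}{N^b-1}$, and hence coordinatewise
\[
    \big(\vp\circ f^m(x)\big)_k = N^{-a-mb}\big(x_k+w_k\Sigma_m\big)+N^{-a}t_k .
\]
Thus the lower corner of $\vp\circ f^m(P)$ has $k$-th coordinate $\ell_k(m)=N^{-a-mb}(c_k+w_k\Sigma_m)+N^{-a}t_k$, and the crucial observation is that, after multiplying by $N^a$ and using $N^{-mb}\Sigma_m=\tfrac{1-N^{-mb}}{N^b-1}$,
\[
    N^a\ell_k(m)=\Big(t_k+\tfrac{w_k}{N^b-1}\Big)+\Big(c_k-\tfrac{w_k}{N^b-1}\Big)N^{-mb}=:A_k+B_k\,\rho^m,
\]
an \emph{affine function of} $\rho^m$, where $\rho:=N^{-b}\in(0,1)$. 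The lower corner of $\vp_*\circ f_*^m(Q)$ is described by the analogous expression $A_k^*+B_k^*\rho^m$.

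I would then reduce box equality to a statement about corners: two axis-parallel boxes with the same degenerate coordinate set and the same non-degenerate side lengths are equal if and only if their lower corners agree. Since the degenerate sets $S,S^*$ and the side lengths $N^{-a-mb}$ are the same structural data at every level, the hypotheses $\vp(P)=\vp_*(Q)$ (the case $m=0$, as $f^0=\id$) and $\vp\circ f(P)=\vp_*\circ f_*(Q)$ (the case $m=1$) already force $S=S^*$ together with $\ell_k(0)=\ell_k^*(0)$ and $\ell_k(1)=\ell_k^*(1)$ for every $k$. Writing $\Delta_k(m):=(A_k-A_k^*)+(B_k-B_k^*)\rho^m$, these say $\Delta_k(0)=\Delta_k(1)=0$. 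Because $\rho^0=1\neq\rho=\rho^1$, the resulting $2\times2$ linear system in $(A_k-A_k^*,\,B_k-B_k^*)$ is non-singular, so $A_k=A_k^*$ and $B_k=B_k^*$; consequently $\Delta_k(m)=0$ for \emph{all} $m$. Hence $\ell_k(m)=\ell_k^*(m)$ for every coordinate $k$ and every $m$, and the two boxes coincide, which is exactly $\vp\circ f^m(P)=\vp_*\circ f_*^m(Q)$.

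The only genuinely delicate point is the bookkeeping in the first two paragraphs: one must verify that the degenerate directions are truly $m$-independent and that the lower-corner formula is uniform across fixed and free coordinates (this is precisely why the convention $c_k=0$ for $k\notin S$ is convenient). Once the corner coordinate is recognized to have the shape $A_k+B_k\rho^m$, the heart of the argument---two vanishing conditions on a two-parameter family forcing the whole family to vanish---is immediate and uses only $N^b\neq 1$. I note in passing that no integrality of $t,t_*,w,w_*$ is needed; the affine-in-$\rho^m$ structure does all the work.
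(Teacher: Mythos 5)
Your proof is correct and rests on essentially the same computation as the paper's: both derive the closed form $\vp\circ f^m(x)=N^{-a-mb}(x+w\Sigma_m)+N^{-a}t$ and use the geometric-series identity to reduce everything to the information contained in the $m=0$ and $m=1$ hypotheses. The only difference is organizational: the paper extracts the single vector relation $(N^b-1)(t-t_*)=w_*-w$ from the two hypotheses and then verifies the cancellation directly for general $m$, whereas you package the same data as ``the lower corner is affine in $N^{-mb}$, and agreement at two distinct values of $N^{-mb}$ forces agreement for all $m$'' --- a slightly more conceptual way of seeing why exactly two conditions suffice.
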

\begin{proof}
    Note that $\vp(P)=\vp_*(Q)$ implies $P=Q+t_*-t$ and $\vp\circ f(P)=\vp_*\circ f_*(Q)$ implies $P=Q+w_*+N^bt_*-w-N^bt$. In particular, we have $t_*-t=w_*+N^bt_*-w-N^bt$ so
    \begin{equation}\label{eq:ttstar}
        (N^b-1)(t-t_*)=w_*-w.
    \end{equation}
    Since $P=Q+t_*-t$, we have for all $m\geq 1$ that 
    \begin{align*}
        N^a\vp\circ f^m(P) &=  \frac{1}{N^{mb}}P+\sum_{k=1}^m \frac{1}{N^{kb}}w+t  \\
        &= \frac{1}{N^{mb}}(Q+t_*-t)+\sum_{k=1}^m \frac{1}{N^{kb}}w+t  \\
        &=\Big( \frac{1}{N^{mb}}Q+\sum_{k=1}^m \frac{1}{N^{kb}}w_*+t_* \Big) + \Big( 1-\frac{1}{N^{mb}} \Big)(t-t_*) 
       + \sum_{k=1}^m \frac{1}{N^{kb}}(w-w_*).
    \end{align*}
    The term in the first bracket equals $N^a\varphi_*\circ f_*^m(Q)$, so it suffices to show that the sum of the last two terms vanishes. But this is straightforward: By~\eqref{eq:ttstar},
    \begin{align*}
        &\Big( 1-\frac{1}{N^{mb}} \Big)(t-t_*)+ \sum_{k=1}^m \frac{1}{N^{kb}}(w-w_*) \\
        &\qquad = \Big( 1-\frac{1}{N^{mb}} \Big)(t-t_*) - \frac{N^{-b}(1-N^{-mb})}{1-N^{-b}}(N^b-1)(t-t_*) = 0.
    \end{align*} 
\end{proof}

Now we have all the ingredients needed to establish Theorem~\ref{thm:main2}.

\begin{proof}[Proof of Theorem~\ref{thm:main2}]
    Again, we only need to show the sufficiency. Suppose $Q_{i,c(n,d)} \cap Q_{j,c(n,d)}\neq\varnothing$ and arbitrarily pick a point $x$ in this intersection. By Lemma~\ref{lem:lengthkwalk}, we can find
    \[
        i\xrightarrow{e_1} i_1\xrightarrow{e_2} i_2\to\cdots\to i_{c(n,d)} \in\E_i^{c(n,d)},\,\, j\xrightarrow{e'_1} j_1\xrightarrow{e'_2} j_2\to\cdots\to j_{c(n,d)} \in\E_j^{c(n,d)}
    \]
    such that
    \[
        x\in \vp_{e_1}\circ\cdots\circ\vp_{e_{c(n,d)}}([0,1]^d)\cap\vp_{e'_1}\circ\cdots\circ\vp_{e'_{c(n,d)}}([0,1]^d) \subset Q_{i,c(n,d)}\cap Q_{j,c(n,d)}.
    \]
    Moreover, we have for $1\leq k\leq c(n,d)$ that
    \[
        \vp_{e_1}\circ\cdots\circ\vp_{e_k}([0,1]^d)\subset Q_{i,k},\,\, \vp_{e'_1}\circ\cdots\circ\vp_{e'_k}([0,1]^d)\subset Q_{j,k},
    \]
    and $x$ is a common point of the above two level-$k$ cubes. Define
    \[
        s_k:=\dim \big( \vp_{e_1}\circ\cdots\circ\vp_{e_k}([0,1]^d)\cap \vp_{e'_1}\circ\cdots\circ\vp_{e'_k}([0,1]^d) \big), \quad 1\leq k\leq c(n,d).
    \]
    Clearly, $s_1\geq s_2\geq\cdots\geq s_{c(n,d)}\geq 0$. We will discuss the following three cases separately.

    \textbf{Case I}: $|\{k:s_k=0\}|\geq n+1$. So $s_{c(n,d)-n}=\cdots=s_{c(n,d)}=0$. This implies that
    \[
        \{x\} = \vp_{e_1}\circ\cdots\circ\vp_{e_k}([0,1]^d)\cap \vp_{e'_1}\circ\cdots\circ\vp_{e'_k}([0,1]^d), \quad c(n,d)-n\leq k\leq c(n,d),
    \]
    i.e., $x$ is the common vertex of these cubes. In particular, there are vertices $\alpha,\beta$ of $[0,1]^d$ such that
    \[
        x = \vp_{e_1}\circ\cdots\circ\vp_{e_{c(n,d)-n}}(\alpha) = \vp_{e'_1}\circ\cdots\circ\vp_{e'_{c(n,d)-n}}(\beta).
    \]
    By Lemma~\ref{lem:lengthkwalk},
    \[
        \vp_{e_{c(n,d)-n+1}}\circ\cdots\circ\vp_{e_{c(n,d)}}([0,1]^d) \subset Q_{i_{c(n,d)-n},n},
    \]
    implying that
    \[
        \vp_{e_1}\circ\cdots\circ\vp_{e_{c(n,d)-n}}(\alpha) = x \in \vp_{e_1}\circ\cdots\circ\vp_{e_{c(n,d)-n}}(Q_{i_{c(n,d)-n},n}).
    \]
    Thus $\alpha\in Q_{i_{c(n,d)-n},n}$ since $\vp_{e_1}\circ\cdots\circ\vp_{e_{c(n,d)-n}}$ is injective. Similarly, $\beta\in Q_{j_{c(n,d)-n},n}$.
    By Corollary~\ref{cor:01finite}, we have $\alpha\in K_{i_{c(n,d)-n}}$ and $\beta\in K_{j_{c(n,d)-n}}$. This further implies that
    \begin{align*}
        \{x\} &= \{\vp_{e_1}\circ\cdots\circ\vp_{e_{c(n,d)-n}}(\alpha)\} \cap \{\vp_{e'_1}\circ\cdots\circ\vp_{e'_{c(n,d)-n}}(\beta)\} \\
        &\subset \vp_{e_1}\circ\cdots\circ\vp_{e_{c(n,d)-n}}(K_{i_{c(n,d)-n}}) \cap \vp_{e'_1}\circ\cdots\circ\vp_{e'_{c(n,d)-n}}(K_{j_{c(n,d)-n}})  \\
        &\subset K_i\cap K_j,
    \end{align*}
    where the last step follows again from Lemma~\ref{lem:lengthkwalk}. In particular, $K_i\cap K_j\neq\varnothing$.

    \textbf{Case II}: $|\{k:s_k=d\}|\geq \frac{n^2-n}{2}$. So $s_1=s_2=\cdots=s_{(n^2-n)/2}=d$. In this case, we have
    \[
        \vp_{e_1}\circ\cdots\circ\vp_{e_{(n^2-n)/2}}([0,1]^d) = \vp_{e'_1}\circ\cdots\circ\vp_{e'_{(n^2-n)/2}}([0,1]^d),
    \]
    which is a level-$(\frac{n^2-n}{2})$ cube contained in $Q_{i,(n^2-n)/2}\cap Q_{j,(n^2-n)/2}$. By Corollary~\ref{cor:levelmsolidwalk}, there is a solid walk in the intersection graph which starts from $(i,j)$ and has length $\frac{n^2-n}{2}$. It then follows from Lemma~\ref{lem:mainthm1_2} that $K_i\cap K_j\neq\varnothing$.

    \textbf{Case III}: $d-1\geq s_{(n^2-n)/2}\geq\cdots\geq s_{c(n,d)-n}\geq 1$. For simplicity, we temporarily write $c_n:=\frac{n^2-n}{2}$. Since 
    \[
        c(n,d)-n-c_n+1= (d-1)n^2+d,
    \]
    there is some $1\leq s\leq d-1$ such that $|\{c_n\leq k\leq c(n,d)-n:s_k=s\}|\geq n^2+2$. Without loss of generality, assume that $s_{c_n}=\cdots=s_{c_n+n^2+1}=s$. Since $|\{(a,b):a,b\in\V_n\}|=n^2$, there are $c_n+1\leq p<q\leq c_n+n^2+1$ such that $(i_p,j_p)=(i_q,j_q)$. Then
    \begin{equation*}
    	\begin{gathered}
    		i\xrightarrow{e_1}\cdots\xrightarrow{e_p} i_p\xrightarrow{e_{p+1}}\cdots\xrightarrow{e_q} i_q=i_p\xrightarrow{e_{p+1}}\cdots\xrightarrow{e_q} i_q \to\cdots, \text{and}\\
    		j\xrightarrow{e'_1}\cdots\xrightarrow{e'_p} j_p\xrightarrow{e'_{p+1}}\cdots\xrightarrow{e'_q} j_q=j_p\xrightarrow{e'_{p+1}}\cdots\xrightarrow{e'_q} j_q \to\cdots
    	\end{gathered}
    \end{equation*}
    are infinite walks in $G$. Denoting $\psi:=\vp_{e_{p+1}}\circ\cdots\circ\vp_{e_{q}}$ and $\psi_*:=\vp_{e'_{p+1}}\circ\cdots\circ\vp_{e'_{q}}$, we have by Corollary~\ref{cor:inki} that
    \begin{equation}\label{eq:twoptsaresame}
       \bigcap_{m=1}^\infty \vp_{e_1}\circ\cdots\circ\vp_{e_{p}}\circ\psi^m([0,1]^d)\subset K_i, \quad \bigcap_{m=1}^\infty  \vp_{e'_1}\circ\cdots\circ\vp_{e'_{p}}\circ\psi_*^m([0,1]^d)\subset K_j. 
    \end{equation}
    Recall that
    \begin{align*}
        &\dim \big( \vp_{e_1}\circ\cdots\circ\vp_{e_{p}}([0,1]^d)\cap \vp_{e'_1}\circ\cdots\circ\vp_{e'_{p}}([0,1]^d) \big) \\
        &\qquad = s =\dim \big( \vp_{e_1}\circ\cdots\circ\vp_{e_{p}}\circ\psi([0,1]^d)\cap \vp_{e'_1}\circ\cdots\circ\vp_{e'_{p}}\circ\psi_*([0,1]^d) \big).
    \end{align*}
    Let 
    $
      R=\vp_{e_1}\circ\cdots\circ\vp_{e_{p}}([0,1]^d)\cap \vp_{e'_1}\circ\cdots\circ\vp_{e'_{p}}([0,1]^d),
    $
    and define
    \[
      P=\big( \vp_{e_1}\circ\cdots\circ\vp_{e_{p}}\big)^{-1}(R), \quad 
      Q=\big(\vp_{e'_1}\circ\cdots\circ\vp_{e'_{p}} \big)^{-1}(R).
    \]
    Then it is straightforward to see that $P$ and $Q$ are $s$-dimensional faces of $[0,1]^d$.      Combining with the monotonicity, we have
    \[
        \left\{\begin{array}{l}
        	\vp_{e_1}\circ\cdots\circ\vp_{e_{p}}(P)=\vp_{e'_1}\circ\cdots\circ\vp_{e'_{p}}(Q), \\
        	\vp_{e_1}\circ\cdots\circ\vp_{e_{p}}\circ\psi(P) = \vp_{e'_1}\circ\cdots\circ\vp_{e'_{p}}\circ\psi_*(Q).
        \end{array}\right.
    \]
    From Lemma~\ref{lem:equalspandq},
    \[
       \bigcap_{m=1}^\infty \vp_{e_1}\circ\cdots\circ\vp_{e_{p}}\circ\psi^m(P) = \bigcap_{m=1}^\infty \vp_{e'_1}\circ\cdots\circ\vp_{e'_{p}}\circ\psi_*^m(Q), 
    \]
    which is a singleton contained in $K_i\cap K_j$ (recall~\eqref{eq:twoptsaresame}). In particular, $K_i\cap K_j\neq\varnothing$.
\end{proof}

\begin{remark}
    The constant $c(n,d)$ in Theorem~\ref{thm:main2} may not be optimal in general. In fact, we would not be too surprised if one could show that $c(n,d)$ can be taken not greater than $d\cdot\frac{(n^2-n)}{2}+(d-1)+n$, which seems to require careful analysis of the structure of the graph-directed system. When $n=1$ (which is just the self-similar case), this is essentially proved in~\cite{DLRWX21} in a slightly different language. 
\end{remark}

\section{Connectedness of sponge-like sets}

Let $F=F(d,N,\I)$ be any fixed sponge-like set in $\R^d$. By Hata's criterion, to determine whether $F$ is connected, it suffices to draw the associated Hata graph. This requires our knowledge of the emptiness of $\vp_i(F) \cap \vp_{j}(F)$ for every pair of $i,j\in\I$. Recall that $\mathcal{O}_d$ denotes the group of symmetries of the unit $d$-cube $[0,1]^d$.

\begin{lemma}\label{lem:cantortypeatt}
    The tuple $(O(F))_{O\in\mathcal{O}_d}$ forms a Cantor-type graph-directed attractor in $\R^d$.
\end{lemma}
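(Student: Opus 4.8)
The plan is to show that applying any cube symmetry $O\in\mathcal{O}_d$ to the self-similar relation $F=\bigcup_{i\in\I}\vp_i(F)$ produces exactly a graph-directed relation among the sets $\{O(F):O\in\mathcal{O}_d\}$, with all edge maps of Cantor type. First I would record that every $O\in\mathcal{O}_d$ is an affine isometry fixing the center $c=(\tfrac12,\ldots,\tfrac12)$ of the cube, so that $O(x)=R\,x+(c-Rc)$ where the linear part $R$ is a signed permutation matrix. The product $RR_i$ of two such matrices is again a signed permutation (here $R_i$ denotes the linear part of $O_i$), so for each pair $(O,i)$ there is a unique $O'=O'(O,i)\in\mathcal{O}_d$ whose linear part is $RR_i$; indeed elements of $\mathcal{O}_d$ are determined by their linear parts, as they all fix $c$.

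The central step is to verify, for every $O\in\mathcal{O}_d$ and $i\in\I$, a factorization
\[
    O\circ\vp_i=\psi_{O,i}\circ O',\qquad \psi_{O,i}(y)=\tfrac1N\bigl(y+t_{O,i}\bigr),
\]
with $t_{O,i}\in\{0,1,\ldots,N-1\}^d$. The quickest route is geometric: since $O_i$ maps $[0,1]^d$ onto itself, $\vp_i([0,1]^d)$ is exactly the standard grid subcube $N^{-1}([0,1]^d+i)$; because $O$ is a symmetry of $[0,1]^d$ it permutes the $N^d$ standard subcubes of side $1/N$, hence $O\circ\vp_i([0,1]^d)$ is again one of them, and its position determines $t_{O,i}\in\{0,\ldots,N-1\}^d$, while matching linear parts $\tfrac1N RR_i$ forces the orientation $O'$. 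Alternatively one computes directly that $t_{O,i}=R(i)+(N-1)(c-Rc)$ and checks componentwise that each entry lies in $\{0,\ldots,N-1\}$: writing the signed permutation as $(Rx)_k=\epsilon_k x_{\sigma(k)}$ with $\epsilon_k\in\{\pm1\}$, a coordinate with $\epsilon_k=+1$ contributes $i_{\sigma(k)}$ and a coordinate with $\epsilon_k=-1$ contributes $N-1-i_{\sigma(k)}$, both in the required range.

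With the factorization in hand, applying $O$ to the fixed-point relation gives
\[
    O(F)=\bigcup_{i\in\I}O\circ\vp_i(F)=\bigcup_{i\in\I}\psi_{O,i}\bigl(O'(O,i)(F)\bigr),
\]
which is precisely the graph-directed relation for the tuple $(O(F))_{O\in\mathcal{O}_d}$: take vertex set $\mathcal{O}_d$, and for each $O$ and each $i\in\I$ an edge from $O$ to $O'(O,i)$ carrying the similarity $\psi_{O,i}$. Every edge map has the Cantor-type form $N^{-1}(\cdot+t)$ with $t\in\{0,\ldots,N-1\}^d$, and every vertex has at least $|\I|\geq 2$ outgoing edges (distinct $i$ yield distinct $t_{O,i}$, so the edges are genuinely distinct). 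By the uniqueness of the graph-directed attractor (see~\cite{Fal97,MW88}), the tuple $(O(F))_{O\in\mathcal{O}_d}$ must be the attractor of this system, which completes the proof.

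I expect the main obstacle to be the factorization step, specifically the bookkeeping that confirms $t_{O,i}\in\{0,\ldots,N-1\}^d$ and that the residual orientation genuinely belongs to $\mathcal{O}_d$. The geometric ``permutation of standard subcubes'' viewpoint keeps this transparent and sidesteps a lengthy matrix computation, so I would present that as the primary argument and relegate the explicit formula for $t_{O,i}$ to a remark.
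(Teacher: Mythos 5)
Your proposal is correct and follows essentially the same route as the paper's proof: you decompose each $O\in\mathcal{O}_d$ into a signed permutation matrix conjugated about the center $(\tfrac12,\ldots,\tfrac12)$, derive the factorization $O\circ\vp_i=\psi_{O,i}\circ(O\circ O_i)$, and verify that the translation vector $t_{O,i}=A_O\bigl(i-(N-1)\otimes_d\tfrac12\bigr)+(N-1)\otimes_d\tfrac12$ has entries in $\{0,\ldots,N-1\}$, exactly as in the paper. The componentwise check via $\epsilon_k=\pm1$ and the added geometric remark about permuting the $N^d$ standard subcubes are both sound but do not change the substance of the argument.
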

\begin{proof}
    It is well known that the group of symmetries of $[-\frac{1}{2},\frac{1}{2}]^d$ is the collection $\mathcal{M}_d$ of $d\times d$ matrices with entries only $0,\pm 1$ and with exactly one non-zero entry in each row and column. Furthermore, $|\mathcal{M}_d|=d!2^d$ (so $|\mathcal{O}_d|=d!2^d$). Writing $\mathbf{a}_d$ to be the $d$-tuple $(\frac{1}{2},\ldots,\frac{1}{2})$, each $O\in\mathcal{O}_d$ corresponds to a matrix $A_O\in\mathcal{M}_d$ such that
    \begin{equation}\label{eq:expressofo}
        O(x) = A_O(x-\mathbf{a}_d)+\mathbf{a}_d.
    \end{equation}
    This is just setting up a conjugacy between mappings with the origin shifted to the center of $[0,1]^d$, which brings us technical convenience when referring to symmetries of $[0,1]^d$.

    Since $F=\bigcup_{i\in\I}\vp_i(F)$, where $\varphi_i(x)=N^{-1}(O_i(x)+i)$, we have for each $O\in\mathcal{O}_d$ that
    \[
        O(F) = O\Big( \bigcup_{i\in\I}\vp_i(F) \Big) = O\Big( \bigcup_{i\in\I}\frac{O_i(F)+i}{N} \Big).
    \]
    It then follows from~\eqref{eq:expressofo} that
    \begin{align*}
        O(F) &= \bigcup_{i\in\I} O\Big( \frac{A_{O_i}(F-\mathbf{a}_d)+\mathbf{a}_d+i}{N} \Big) \\
        &= \bigcup_{i\in\I} A_O\Big( \frac{A_{O_i}(F-\mathbf{a}_d)+\mathbf{a}_d+i}{N}-\mathbf{a}_d \Big)+\mathbf{a}_d. 
    \end{align*}
    Rewriting this gives us 
    \begin{align*}
        O(F) &= \bigcup_{i\in\I}\frac{A_OA_{O_i}(F-\mathbf{a}_d)+\mathbf{a}_d}{N}+A_O\Big( \frac{\mathbf{a}_d+i}{N}-\mathbf{a}_d \Big) + \frac{N-1}{N}\mathbf{a}_d \\
        &= \bigcup_{i\in\I}\frac{O\circ O_i(F)}{N}+\frac{1}{N}A_O\Big( i-(N-1)\mathbf{a}_d \Big) + \frac{N-1}{N}\mathbf{a}_d. 
    \end{align*}
    Since entries of $i$ only take value in $\{0,1,\ldots,N-1\}$, entries of the second term above only take value in $N^{-1}\{-\tfrac{N-1}{2},-\tfrac{N-1}{2}+1,\ldots,\tfrac{N-1}{2}\}$. So the sum of the last two terms only takes value in $\{0,\frac{1}{N},\ldots,\frac{N-1}{N}\}^d$. Combining with the fact that $O\circ O_i\in\mathcal{O}_d$, we see that $(O(F))_{O\in\mathcal{O}_d}$ forms a Cantor-type graph-directed attractor.

    More precisely, enumerating $\mathcal{O}_d=\{\widetilde{O}_1,\ldots,\widetilde{O}_{d!2^d}\}$, we have
    \begin{equation}\label{eq:okfrepresentation}
        \widetilde{O}_k(F) = \bigcup_{i\in\I} \frac{\widetilde{O}_k\circ O_i(F)+t_{k,i}}{N} = \bigcup_{i\in\I} \frac{\widetilde{O}_{\Omega(k,i)}(F)+t_{k,i}}{N}, \quad 1\leq k\leq d!2^d,
    \end{equation}
    where $\Omega(k,i)$ is such that $\widetilde{O}_{\Omega(k,i)}=\widetilde{O}_k\circ O_i$ and
    \[
        t_{k,i}:= A_{\widetilde{O}_k}\Big(  i-(N-1)\mathbf{a}_d \Big) + (N-1)\mathbf{a}_d. 
    \]
    So the graph-directed structure associated with $(\widetilde{O}_1(F),\ldots,\widetilde{O}_{d!2^d}(F))$ is as follows.
    \begin{enumerate}
        \item The vertex set can be labelled as $\{1,2,\ldots,d!2^d\}$;
        \item For every $1\leq k\leq d!2^d$ and every $i\in\I$, we add an edge from $k$ to $\Omega(k,i)$ and assign the homothety corresponding to this edge to be $x\mapsto N^{-1}(x+t_{k,i})$.
    \end{enumerate}
\end{proof}

For convenience, we keep enumerating $\mathcal{O}_d$ as $\{\widetilde{O}_1,\widetilde{O}_2,\ldots,\widetilde{O}_{d!2^d}\}$ in the rest of this section. It is noteworthy that in the above proof, we actually show that
\begin{equation}\label{eq:ocircvpi}
    \widetilde{O}_k\circ \vp_i(x) = \frac{\widetilde{O}_{\Omega(k,i)}(x)+t_{k,i}}{N}, \quad i\in\I, x\in\R^d.
\end{equation}
This formula will be used later. It turns out that $\{\widetilde{O}_k(F_t)\}_{t=0}^\infty$ is just the sequence of geometric approximations of $\widetilde{O}_k(F)$, where $F_t$ is as in~\eqref{eq:notationoffk}.

\begin{lemma}\label{lem:approofof}
    Let $1\leq k\leq d!2^d$ and let $t\geq 0$. Then $\widetilde{O}_k(F_t)$ is the level-$t$ approximation of $\widetilde{O}_k(F)$.
\end{lemma}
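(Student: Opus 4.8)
The plan is to argue by induction on $t$, using the commutation formula~\eqref{eq:ocircvpi} to move $O_k$ past each $\vp_i$ and thereby convert it into the corresponding edge map of the graph-directed system produced in Lemma~\ref{lem:cantortypeatt}. Throughout, I will write $\psi_{k,i}(x):=N^{-1}(x+t_{k,i})$ for the edge map from vertex $k$ to vertex $\Omega(k,i)$, so that the level-$t$ approximation $Q_{k,t}$ of $O_k(F)$ satisfies $Q_{k,0}=[0,1]^d$ and $Q_{k,t}=\bigcup_{i\in\I}\psi_{k,i}(Q_{\Omega(k,i),t-1})$ for $t\geq 1$, in accordance with Lemma~\ref{lem:approximation}.

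For the base case $t=0$, I simply note that $O_k\in\mathcal{O}_d$ is a symmetry of the unit cube, so that $O_k(F_0)=O_k([0,1]^d)=[0,1]^d=Q_{k,0}$; this holds simultaneously for every index $k$.

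For the inductive step, I would assume $O_k(F_{t-1})=Q_{k,t-1}$ for \emph{all} $k$ at once (this is essential, since pushing $O_k$ inward produces the rotated pieces $O_{\Omega(k,i)}(F_{t-1})$ sitting at the child vertices). Expanding $F_t=\bigcup_{i\in\I}\vp_i(F_{t-1})$, pushing $O_k$ inside the union, and invoking~\eqref{eq:ocircvpi} yields
\[
    O_k(F_t)=\bigcup_{i\in\I}O_k\circ\vp_i(F_{t-1})=\bigcup_{i\in\I}\frac{O_{\Omega(k,i)}(F_{t-1})+t_{k,i}}{N}=\bigcup_{i\in\I}\psi_{k,i}\big(O_{\Omega(k,i)}(F_{t-1})\big).
\]
Applying the induction hypothesis to replace each $O_{\Omega(k,i)}(F_{t-1})$ by $Q_{\Omega(k,i),t-1}$ turns the right-hand side into exactly $\bigcup_{i\in\I}\psi_{k,i}(Q_{\Omega(k,i),t-1})=Q_{k,t}$, which closes the induction.

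I do not anticipate a genuine obstacle here, as the content is essentially bookkeeping. The only point warranting care is matching the recursion defining $Q_{k,t}$ to the edge structure recorded at the end of Lemma~\ref{lem:cantortypeatt}: the edges leaving vertex $k$ are indexed precisely by $i\in\I$, each carrying the map $\psi_{k,i}$ and terminating at $\Omega(k,i)$, so that the union over $\E_{k,\cdot}$ in the definition of $Q_{k,t}$ coincides with the union over $i\in\I$ displayed above. Once this identification is fixed, the commutation formula~\eqref{eq:ocircvpi} does all the work.
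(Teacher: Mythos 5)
Your proposal is correct and follows essentially the same route as the paper: both rest on the commutation formula~\eqref{eq:ocircvpi} to show that $\{O_k(F_t)\}_t$ obeys the same recursion as the level-$t$ approximations, together with the common base case $O_k(F_0)=[0,1]^d$. Your version merely spells out as an explicit simultaneous induction over all $k$ what the paper compresses into the remark that two sequences satisfying the same recursion with the same initial data must coincide.
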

\begin{proof}
    By~\eqref{eq:ocircvpi}, we have for all $t\geq 0$
    \[
        \widetilde{O}_k(F_{t+1}) = \widetilde{O}_k\Big( \bigcup_{i\in\I} \vp_i(F_t) \Big) = \bigcup_{i\in\I} \widetilde{O}_k\circ\vp_i(F_t) = \bigcup_{i\in\I} \frac{\widetilde{O}_{\Omega(k,i)}(F_t)+t_{k,i}}{N}.
    \]
    This implies that $\{\widetilde{O}_k(F_t)\}_{t=0}^\infty$ satisfies the same recursive relation as does the level-$t$ approximations of $\widetilde{O}_k(F)$ (see~\eqref{eq:okfrepresentation}). Since $\widetilde{O}_k(F_0)=\widetilde{O}_k([0,1]^d)=[0,1]^d$ is just the level-$0$ approximation of $\widetilde{O}_k(F)$, these two sequences must be identical.
\end{proof}

\subsection{Hata graphs and the proof of Theorem~\ref{thm:main3}}

For distinct $i,j\in\I$, a direct observation is that $\vp_i(F) \cap \vp_{j}(F)=\varnothing$ whenever $\vp_i([0,1]^d)$ and $\vp_{j}([0,1]^d)$ are not adjacent. So it suffices to consider cases when these two cubes intersect with each other (equivalently, $i-j$ is a $0$-$\pm 1$ vector). Note that
\begin{equation}\label{eq:vpicapvpj}
    \vp_i(F) \cap \vp_{j}(F) = \frac{O_i(F)+i}{N} \cap \frac{O_{j}(F)+j}{N} = \frac{O_{j}(F)\cap (O_i(F)+i-j)}{N}+\frac{j}{N}.
\end{equation}
So $\vp_i(F) \cap \vp_{j}(F)\neq\varnothing$ if and only if $O_{j}(F)\cap (O_i(F)+i-j)\neq\varnothing$.

\textbf{Case I}: $i-j\in\{(a_1,\ldots,a_d):|a_t|=1 \text{ for all $1\leq t\leq d$}\}$. In this case,
\[
    O_{j}([0,1]^d)\cap (O_{i}([0,1]^d)+i-j) = [0,1]^d \cap ([0,1]^d+i-j)
\]
is a vertex, say $\alpha$, of the unit cube. So to see whether $\vp_i(F) \cap \vp_{i'}(F)$ is empty, it suffices to check  whether $\alpha$ is a common point of $O_j(F)$ and $O_i(F)+i-j$. Equivalently, it suffices to check whether $\alpha\in O_j(F)$ and whether $\alpha+j-i\in O_i(F)$. Note that $\alpha+j-i$ is also a vertex of $[0,1]^d$. Since $(O(F))_{O\in\mathcal{O}_d}$ forms a graph-directed attractor of Cantor-type (Lemma~\ref{lem:cantortypeatt}), this can be achieved by drawing the corresponding graphs introduced in Section 2.1.

\textbf{Case II}: $i-j\in\{(a_1,\ldots,a_d):|a_t|\leq 1 \text{ for $1\leq t\leq d$ with } 0<\Sigma_{t}|a_t|<d\}$. In this case,
\[
    O_{j}([0,1]^d)\cap (O_{i}([0,1]^d)+i-j) = [0,1]^d\cap ([0,1]^d+i-j)
\]
is a lower dimensional face $P$ of the unit cube. So to see whether $\vp_i(F) \cap \vp_{j}(F)$ is empty, it suffices to check that whether
\[
    (O_{j}(F)\cap P) \cap ((O_{i}(F)+i-j)\cap P)=\varnothing.
\]
Since $P$ and $P+j-i$ (which is also a face of $[0,1]^d$) are parallel, this is equivalent to verifying whether
\begin{align*}
    h_P\big(O_{j}(F)\cap P\big) &\cap h_P\big((O_{i}(F)+i-j)\cap P\big) \\
    &=
    h_P\big(O_{j}(F)\cap P\big) \cap h_{P+j-i}\big(O_{i}(F)\cap(P+j-i)\big)=\varnothing.
\end{align*}
Corollary~\ref{cor:reducedim} (and Corollary~\ref{cor:01} if necessary) implies that this can be reduced to the intersection problem in $\R^{\dim P}$ and we can solve this using Theorems~\ref{thm:main1} or~\ref{thm:main2}.

Similar to the graph-directed setting, we can determine the connectedness of $F$ within finitely many iterations.

\begin{proposition}\label{prop:aidofthmmain3}
    Let $1\leq i,j\leq d!2^d$ and let $\alpha\neq\mathbf{0}$ be any $0$-$\pm 1$ vector. If $\widetilde{O}_i(F_{C(d)-1}) \cap (\widetilde{O}_j(F_{C(d)-1})+\alpha)\neq\varnothing$ then $\widetilde{O}_i(F) \cap (\widetilde{O}_j(F)+\alpha)\neq\varnothing$, where $C(d)$ is as in Theorem~\ref{thm:main3}.
\end{proposition}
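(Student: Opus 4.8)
The plan is to realize the translated pair $(O_i(F),\, O_j(F)+\alpha)$ as two designated vertices of an auxiliary Cantor type graph-directed attractor and then to invoke Theorem~\ref{thm:main2}. The starting point is Lemma~\ref{lem:cantortypeatt}, which already exhibits $(O_1(F),\ldots,O_{d!2^d}(F))$ as a Cantor type graph-directed attractor with $d!2^d$ vertices, together with Lemma~\ref{lem:approofof}, which identifies $O_k(F_t)$ with the level-$t$ approximation of $O_k(F)$. To this system I would adjoin two new vertices $u$ and $v$, intended to carry $O_i(F)$ and $O_j(F)+\alpha$ respectively, so that the augmented system has exactly $d!2^d+2$ vertices, matching the count appearing inside $c(\cdot,\cdot)$ in the definition $C(d)=c(d!2^d+2,d)-1$.

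The heart of the construction is to express $O_j(F)+\alpha$ through Cantor type edges despite the fact that this set lies in $[0,1]^d+\alpha$ rather than in $[0,1]^d$. First I would note, as in the discussion around~\eqref{eq:vpicapvpj}, that since $\alpha\neq\mathbf{0}$ is a $0$-$\pm 1$ vector the intersection $O_i(F)\cap(O_j(F)+\alpha)$ is confined to the shared face $[0,1]^d\cap([0,1]^d+\alpha)$. Unfolding one step via~\eqref{eq:ocircvpi}, any level-$1$ piece of $O_i(F)$ and any level-$1$ piece of $O_j(F)+\alpha$ that meet must do so across a common face; after rescaling by $N$ the local picture is again of the form $O_{k}(F)\cap(O_{l}(F)+\beta)$ with $\beta$ once more a $0$-$\pm 1$ vector. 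This self-similarity of the adjacency offset is what lets the edges issuing from $u$ and $v$ close up among the original $d!2^d$ vertices (together with $u,v$ themselves) without ever leaving the Cantor type class; I would record the edge data explicitly using $\Omega(\cdot,\cdot)$ and the translations $t_{k,\cdot}$ from Lemma~\ref{lem:cantortypeatt} and verify the conditions of Definition~\ref{de:gds}. Arguing as in Lemma~\ref{lem:approofof} inside the augmented system, the approximations attached to $u$ and $v$ then reproduce $O_i(F_t)$ and $O_j(F_t)+\alpha$, up to a single bookkeeping shift of one refinement level, which is precisely the $-1$ in the definition of $C(d)$.

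With the augmented attractor in hand the conclusion is immediate: the hypothesis $O_i(F_{C(d)})\cap(O_j(F_{C(d)})+\alpha)\neq\varnothing$ says exactly that the level-$c(d!2^d+2,d)$ approximations of the vertices $u$ and $v$ intersect, so Theorem~\ref{thm:main2}, applied with $n=d!2^d+2$, yields that the attractors attached to $u$ and $v$, namely $O_i(F)$ and $O_j(F)+\alpha$, have non-empty intersection. The main obstacle, and the step I would spend the most care on, is the second paragraph: hosting the translate $O_j(F)+\alpha$ as a genuine Cantor type vertex even though it leaves $[0,1]^d$, and pinning down the offset between the approximations of the original sets and those of $u,v$ to be exactly one, so that the level indices line up and the constant comes out as $C(d)=c(d!2^d+2,d)-1$ rather than off by one. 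An alternative route, should the augmented construction prove delicate, is to mirror directly the three-case dimension analysis in the proof of Theorem~\ref{thm:main2}: choosing a point $x$ in the hypothesised intersection, extracting via Lemma~\ref{lem:lengthkwalk} two length-$C(d)$ edge sequences realizing $x$ in a level-$k$ cube of $O_i(F)$ and in a translate by $\alpha$ of a level-$k$ cube of $O_j(F)$, and tracking the non-increasing face dimensions together with the persistent $0$-$\pm 1$ offset.
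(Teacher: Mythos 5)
Your overall strategy --- adjoin two vertices to the Cantor type system of Lemma~\ref{lem:cantortypeatt} so that $n=d!2^d+2$ and then invoke Theorem~\ref{thm:main2} --- is exactly the paper's, and you correctly isolate the one genuinely delicate point: $O_j(F)+\alpha$ does not sit inside $[0,1]^d$, so it cannot itself serve as a vertex of a Cantor type system. But you do not actually resolve that point, and the sketch you offer for it would not work. If the new vertex $v$ is to carry the set $O_j(F)+\alpha$ with edges ``closing up among the original vertices,'' the required maps are $x\mapsto N^{-1}\bigl(O_{\Omega(j,l)}(x)+t_{j,l}+N\alpha\bigr)$, whose translation vectors have entries as large as $2N-1$ and hence violate Definition~\ref{de:gds}; and the ``unfold one step and rescale'' picture you describe instead sets up a recursion on pairs $(O_k(F),O_l(F)+\beta)$, which amounts to re-deriving the intersecting-graph machinery rather than reducing to Theorem~\ref{thm:main2}.

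The paper's fix is simpler and should be made explicit: after reducing (by translating the whole configuration) to the case where $\alpha$ is a $0$-$1$ vector, attach to the two new vertices not $O_i(F)$ and $O_j(F)+\alpha$ but their contractions $K_1:=N^{-1}O_i(F)$ and $K_2:=N^{-1}(O_j(F)+\alpha)$, each with a \emph{single} edge into the old system: $x\mapsto N^{-1}x$ from $v_1$ to the vertex $i$, and $x\mapsto N^{-1}(x+\alpha)$ from $v_2$ to the vertex $j$. Both maps are Cantor type precisely because $\alpha\in\{0,1\}^d\subset\{0,\ldots,N-1\}^d$, both images lie in $[0,1]^d$, and by Lemma~\ref{lem:approofof} the level-$(C(d)+1)$ approximations of $K_1$ and $K_2$ are $N^{-1}O_i(F_{C(d)})$ and $N^{-1}(O_j(F_{C(d)})+\alpha)$. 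Since $C(d)+1=c(d!2^d+2,d)$, Theorem~\ref{thm:main2} gives $K_1\cap K_2\neq\varnothing$, which after scaling by $N$ is the conclusion. This is also the honest source of the $-1$ in $C(d)$: the extra refinement level comes from the single contracting edge out of each new vertex, not from any shift in Lemma~\ref{lem:approofof} itself.
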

\begin{proof}
    Without loss of generality, assume that $\alpha$ is a $0$-$1$ vector (other cases can be similarly discussed). By Lemma~\ref{lem:cantortypeatt}, the tuple $(\widetilde{O}_k(F))_{k=1}^{d!2^d}$ is a Cantor-type graph-directed attractor in $\R^d$. Let us add two more vertices into the associated graph-directed system, namely $v_1$ and $v_2$. We add an edge from $v_1$ to $i$ with the corresponding similarity $x\mapsto N^{-1}x$, and another edge from $v_2$ to $j$ with the corresponding similarity $x\mapsto N^{-1}(x+\alpha)$. Letting $K_1:=N^{-1}\widetilde{O}_i(F)$ and $K_2:=N^{-1}(\widetilde{O}_j(F)+\alpha)$, it is not hard to see that $(\widetilde{O}_1(F),\ldots,\widetilde{O}_{d!2^d}(F),K_1,K_2)$ is the attractor associated with the new graph-directed system.

    Lemma~\ref{lem:approofof} tells us that $N^{-1}\widetilde{O}_i(F_{C(d)-1})$ and $N^{-1}(\widetilde{O}_j(F_{C(d)-1})+\alpha)$  are the level-$(C(d))$ approximations of $K_1$ and $K_2$, respectively. So by the assumption of this proposition, they have a non-empty intersection. It then follows immediately from the definition of $C(d)$ and Theorem~\ref{thm:main2} that $K_1\cap K_2\neq\varnothing$. Equivalently, $\widetilde{O}_i(F) \cap (\widetilde{O}_j(F)+\alpha)\neq\varnothing$.
\end{proof}

\begin{proof}[Proof of Theorem~\ref{thm:main3}]
    Since $F_n\supset F$ for every $n\geq 1$, it suffices to show the sufficiency. Note that $F_{C(d)}=\bigcup_{i\in\I} \vp_i(F_{C(d)-1})$ is a finite union of compact sets. Then it follows from the connectedness of $F_{C(d)}$ that for each pair of $i,j\in\I$, there exist $i_1,\ldots,i_m\in\I$ such that $i_1=i$, $i_m=j$, and $\vp_{i_k}(F_{C(d)-1})\cap\vp_{i_{k+1}}(F_{C(d)-1})\neq\varnothing$ for $1\leq k\leq m-1$.

    Since $F_n\subset[0,1]^d$ for all $n$, every coordinate of $i_k-i_{k+1}$ has absolute value not greater than $1$ (so it is a $0$-$\pm 1$ vector). With $F$ replaced by $F_{C(d)-1}$ in~\eqref{eq:vpicapvpj}, we see that $\vp_{i_k}(F_{C(d)-1})\cap\vp_{i_{k+1}}(F_{C(d)-1})\neq\varnothing$ is a scaled copy of $O_{i_{k+1}}(F_{C(d)-1})\cap (O_{i_k}(F_{C(d)-1})+i_k-i_{k+1})$. It follows immediately from Proposition~\ref{prop:aidofthmmain3} that
    \[
        O_{i_{k+1}}(F)\cap (O_{i_k}(F)+i_k-i_{k+1}) \neq\varnothing,
    \]
    which in turn implies that $\vp_{i_k}(F)\cap\vp_{i_{k+1}}(F)\neq\varnothing$. Since this holds for all $k$, we see by Hata's criterion that $F$ is connected.
\end{proof}

\subsection{Improvements on the constant $C(d)$}

In some circumstances, it is possible to determine the connectedness of $F$ more quickly than applying Theorem~\ref{thm:main3}. For example, the following result indicates that when $O_i\equiv\id$ for all $i\in\mathcal{I}$, it suffices to iterate $d+1$ times.

\begin{proposition}[\cite{DLRWX21}]\label{prop:gscconnected}
    Let $F$ be a Sierpi\'nski sponge in $\R^d$. Then $F$ is connected if and only if $F_{d+1}$ is connected.
\end{proposition}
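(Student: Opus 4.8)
The plan is to combine Hata's criterion with a sharp, dimension-reducing refinement of the finite-iteration argument, the key structural input being that \emph{faces of a sponge with identity orientations are again sponges}. This is exactly the feature that fails once elements of $\mathcal{O}_d$ other than the identity appear, and it is the reason the general constant $C(d)$ in Theorem~\ref{thm:main3} is so much larger than $d+1$. The easy direction is automatic: every level-$(d+1)$ cube $\vp_w([0,1]^d)$ occurring in $F_{d+1}=\bigcup_{w\in\I^{d+1}}\vp_w([0,1]^d)$ contains the nonempty set $\vp_w(F)\subset F$, so each such cube meets the (connected) set $F$; being a union of connected cubes all meeting $F$, $F_{d+1}$ is then connected. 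Hence it suffices to prove that $F_{d+1}$ connected implies $F$ connected.

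For the hard direction I would first pass through Hata's criterion. Connectedness of $F_{d+1}=\bigcup_{i\in\I}\vp_i(F_d)$ produces, for each pair $i,j\in\I$, a chain $i=i_1,\dots,i_m=j$ with $\vp_{i_k}(F_d)\cap\vp_{i_{k+1}}(F_d)\neq\varnothing$; by \eqref{eq:vpicapvpj} (with $F$ replaced by $F_d$ and all orientations equal to the identity) this is equivalent to $F_d\cap(F_d+\alpha_k)\neq\varnothing$, where $\alpha_k:=i_k-i_{k+1}$ is a nonzero $0$-$\pm1$ vector. Everything then reduces to the \emph{core claim}: for a sponge $F$ in $\R^d$ and any nonzero $0$-$\pm1$ vector $\alpha$, the nonemptiness of $F_d\cap(F_d+\alpha)$ already forces $F\cap(F+\alpha)\neq\varnothing$. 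Granting the core claim, each chain lifts from level $d$ to the attractor itself, giving $\vp_{i_k}(F)\cap\vp_{i_{k+1}}(F)\neq\varnothing$, so the Hata graph of $F$ is connected and $F$ is connected.

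To prove the core claim I would project onto the contact face. Writing $P:=[0,1]^d\cap([0,1]^d+\alpha)$ and $P':=P-\alpha$ for the opposite face, one has $F\cap(F+\alpha)\neq\varnothing$ if and only if $h_P(F\cap P)\cap h_{P'}(F\cap P')\neq\varnothing$, an intersection of \emph{two} sponges $\tilde F^{(1)},\tilde F^{(2)}$ in $\R^{\dim P}$ carrying \emph{no} residual translation, since $\alpha$ lives entirely in the discarded coordinates. Because all orientations are the identity, $h_P(F\cap P)$ is genuinely a sponge and $h_P(F_k\cap P)$ is its level-$k$ approximation (the structural content of Corollary~\ref{cor:reducedim} together with its approximation analogue), so the hypothesis at level $d$ transfers to the reduced problem with no loss. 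I would then prove by induction on dimension the two-sponge statement: for sponges $G^{(1)},G^{(2)}$ in $\R^{d''}$, if $G^{(1)}_{d''+1}\cap G^{(2)}_{d''+1}\neq\varnothing$ then $G^{(1)}\cap G^{(2)}\neq\varnothing$. In the inductive step one inspects a pair of overlapping top-level subcubes: if their digits coincide, i.e.\ $\I_1\cap\I_2\neq\varnothing$, then the attracting fixed point $i/(N-1)$ of the common map lies in both attractors and we finish at once (the self-similar analogue of an infinite solid walk); otherwise the digits differ, the two subcubes meet along a proper common face, and the problem descends to a two-sponge intersection in strictly smaller dimension at the cost of exactly one iteration. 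The recursion bottoms out at a $0$-dimensional face, i.e.\ a vertex-membership test, settled by one iteration via Corollary~\ref{cor:01finite}. Charging one iteration to each dimension drop gives the level $d''+1$; feeding level $d\ge(d-s)+1$ (where $s=|\mathrm{supp}\,\alpha|\ge1$) into the face reduction, which costs no iterations, then yields the core claim at level $d$.

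The main obstacle is the quantitative bookkeeping that pins the total cost to exactly $d$ rather than the crude $C(d)$: each dimension drop must be charged precisely one iteration and the shared-digit case must be recognized as free, so that the current dimension and the remaining iteration budget decrease in lockstep from $d$ down to the vertex base case. The secondary but essential technical point is verifying that $h_P(F\cap P)$ is truly a sponge with $h_P(F_k\cap P)$ as its level-$k$ approximation; this is precisely where the hypothesis $O_i\equiv\id$ enters, and where the argument would collapse for general $O_i\in\mathcal{O}_d$, since there faces of $F$ project only to graph-directed sets and the clean dimension induction is lost.
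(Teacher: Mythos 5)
The paper does not actually prove this proposition: it is imported verbatim from the reference \cite{DLRWX21} and used as a black box, so there is no internal argument to compare yours against. Your proposal is therefore necessarily a ``different route'' in the sense that it is an actual proof, and after checking it I believe the outline is sound. The easy direction is fine, the reduction of the hard direction via Hata's criterion and \eqref{eq:vpicapvpj} to the core claim ``$F_d\cap(F_d+\alpha)\neq\varnothing\Rightarrow F\cap(F+\alpha)\neq\varnothing$'' is correct, and the dimension induction works precisely because, with $O_i\equiv\id$, the face set $h_P(F\cap P)$ is again a sponge whose level-$k$ approximation is $h_P(F_k\cap P)$ (this follows from Lemma~\ref{lem:face} together with the identity $h_P\circ\vp_e=\psi_e\circ h_P$ established inside Lemma~\ref{lem:reducedim}). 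When you write it up, three points should be made explicit. First, the induction is really a mutual one between two statements: $A(k)$, that two sponges in $\R^k$ whose level-$(k+1)$ approximations meet have nonempty intersection, and $B(k)$, that two sponges in $\R^k$ whose level-$k$ approximations meet after a nonzero $0$-$\pm 1$ translation $\beta$ still meet after that translation. One has $A(k)\Leftarrow B(k)$ by peeling off one subdivision level (spending the extra iteration), and $B(k)\Leftarrow A(k-s)$ with $s=|\mathrm{supp}\,\beta|\geq 1$ by the face projection at no cost in levels, the full-support case $s=k$ being the vertex test settled by Corollary~\ref{cor:01finite} with $n=1$; your core claim is exactly $B(d)$. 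Second, in the subdivision step the clean dichotomy is whether the two \emph{specific} overlapping level-one cubes carry the same digit, rather than whether the two digit sets merely share an element (though the fixed-point argument happens to close the latter case too, so either reading yields a valid proof). Third, the face sponges produced along the way may have digit sets of cardinality $1$ or $N^{\dim P}$, so $A(k)$ and $B(k)$ must be stated for arbitrary nonempty digit sets rather than for sponge-like sets in the paper's strict sense. None of these affects correctness; they are the bookkeeping you already flagged as the main obstacle.
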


This result can be further extended as follows.

\begin{proposition}
    Assume that there is some $O\in\mathcal{O}_d$ such that $O_i=O$ for all $i\in\I$. Denote $m$ to be the order of $O$ (i.e., the smallest integer $m\in\Z^+$ such that $O^m=\id$). Then $F$ is connected if and only if $F_{(d+1)m}$ is connected.
\end{proposition}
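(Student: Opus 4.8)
The plan is to reduce the statement to the pure-translation case already settled in Proposition~\ref{prop:gscconnected}, by passing from the given IFS to its $m$-fold iterate. For a word $\mathbf{i}=(i_1,\ldots,i_m)\in\I^m$ write $\vp_{\mathbf{i}}:=\vp_{i_1}\circ\cdots\circ\vp_{i_m}$. The two facts I would extract are as follows. First, since $\vp_i$ has linear part $N^{-1}A_O$, where $A_O$ is the signed-permutation matrix of $O$ from Lemma~\ref{lem:cantortypeatt}, the composition $\vp_{\mathbf{i}}$ has linear part $(N^{-1}A_O)^m=N^{-m}A_O^m=N^{-m}\id$, because $O^m=\id$ is equivalent to $A_O^m=I$. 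Second, since $O([0,1]^d)=[0,1]^d$ we have $\vp_i([0,1]^d)=N^{-1}([0,1]^d+i)$, so $\vp_{\mathbf{i}}([0,1]^d)$ is an ordinary level-$m$ cube of the base-$N$ subdivision of $[0,1]^d$. Combining the two, $\vp_{\mathbf{i}}(x)=N^{-m}(x+t_{\mathbf{i}})$ for a unique digit $t_{\mathbf{i}}\in\{0,1,\ldots,N^m-1\}^d$.

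Next I would verify that these iterated maps assemble into a genuine Sierpi\'nski sponge at base $N^m$. Set $\mathcal{J}:=\{t_{\mathbf{i}}:\mathbf{i}\in\I^m\}$. The bound $|\mathcal{J}|>1$ holds because words starting with different letters land in different level-$1$ cubes, so the digit map is injective on the first letter and $|\mathcal{J}|\geq|\I|>1$; moreover $|\mathcal{J}|\leq|\I|^m<(N^d)^m=N^{md}$. Iterating $F=\bigcup_{i\in\I}\vp_i(F)$ exactly $m$ times gives $F=\bigcup_{t\in\mathcal{J}}N^{-m}(F+t)$, so by the uniqueness of the attractor $F$ coincides with the Sierpi\'nski sponge $F(d,N^m,\mathcal{J})$, all of whose rotational components are the identity.

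It then remains to invoke Proposition~\ref{prop:gscconnected} and translate back. Applied to $F(d,N^m,\mathcal{J})$, that proposition says $F$ is connected if and only if its $(d+1)$-st geometric approximation computed in the base-$N^m$ digit system is connected. But that approximation is the union of the level-$(d+1)$ cubes of the base-$N^m$ subdivision that survive $d+1$ steps of the iteration, and unfolding each digit of $\mathcal{J}$ back into a length-$m$ word over $\I$ identifies it exactly with the original approximation $F_{(d+1)m}$. Hence $F$ is connected if and only if $F_{(d+1)m}$ is connected, as claimed; when $O=\id$ one has $m=1$ and this recovers Proposition~\ref{prop:gscconnected}.

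The only genuine obstacle is the middle paragraph, namely confirming that the iterated maps are honestly of Cantor type at scale $N^m$: that each $\vp_{\mathbf{i}}$ has trivial rotational part and integer digit $t_{\mathbf{i}}\in\{0,\ldots,N^m-1\}^d$, and that $1<|\mathcal{J}|<N^{md}$, so that $F$ legitimately satisfies the hypotheses under which Proposition~\ref{prop:gscconnected} was proved. Once this is in place, identifying the $(d+1)$-st base-$N^m$ approximation with $F_{(d+1)m}$ is routine bookkeeping and requires no further estimates.
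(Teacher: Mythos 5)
Your proposal is correct and follows essentially the same route as the paper: iterate the IFS $m$ times so that $O^m=\id$ kills the rotational part, recognize $F$ as a Sierpi\'nski sponge $F(d,N^m,\mathcal{J})$ with pure translations, and apply Proposition~\ref{prop:gscconnected} after identifying its $(d+1)$-st base-$N^m$ approximation with $F_{(d+1)m}$. The only cosmetic difference is that the paper computes the digit set $\mathcal{J}$ explicitly via the conjugated digit maps $\theta_k$, whereas you deduce the translation form abstractly from the linear part $(N^{-1}A_O)^m=N^{-m}I$; both arguments are sound.
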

\begin{proof}
    For $k\geq 1$, define $\theta_k$ to be the map on $\{0,1,\ldots,N-1\}^d$ given by
    \[
       \theta_k: i\mapsto A_{O^k}(i-(N-1)\mathbf{a}_d)+(N-1)\mathbf{a}_d,
    \]
    where $A_{O^k}$ and $\mathbf{a}_d$ are as in the proof of Lemma~\ref{lem:cantortypeatt}. It then follows from~\eqref{eq:ocircvpi} that
    \[
        O^k(F) =  \bigcup_{i\in\I}O^k\circ\vp_i(F) = \bigcup_{i\in\I} \frac{O^{k+1}(F)+\theta_k(i)}{N}, \quad k\geq 1.
    \]
    Applying this repeatedly, we see that
    \begin{align*}
        F &= \bigcup_{i_1\in\I}\frac{O(F)+i_1}{N} \\
        &= \bigcup_{i_1\in\I}\cdots\bigcup_{i_m\in\I} \frac{O^m(F)+\theta_{m-1}(i_m)+N\theta_{m-2}(i_{m-1})+\cdots+N^{m-1}i_1}{N^m} \\
        &= \bigcup_{j\in\mathcal{J}} \frac{F+j}{N^m},
    \end{align*}
    where $\mathcal{J}:=\theta_{m-1}(\I)+N\theta_{m-2}(\I)+\cdots+N^{m-1}\I$. This means that $F$ is a Sierpi\'nski sponge associated with the IFS $\{N^{-m}(x+j):j\in\mathcal{J}\}$. Note that (similarly as above)
    \[
        F_{m} = \bigcup_{i_1\in\I}\frac{O(F_{m-1})+i_1}{N} = \cdots = \bigcup_{j\in\mathcal{J}} \frac{F_0+j}{N} =  \bigcup_{j\in\mathcal{J}} \frac{[0,1]^d+j}{N^m}.
    \]
    Then the desired equivalence follows directly from Proposition~\ref{prop:gscconnected}.
\end{proof}

\begin{proposition}
    Assume that $\{0,N-1\}^{d}\subset\I$. Then $F$ is connected if and only if $F_1$ is connected.
\end{proposition}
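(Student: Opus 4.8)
The plan is to recast both connectedness statements as connectedness of finite graphs on the index set $\I$ and to show that these two graphs coincide under the corner hypothesis. Since $O_i([0,1]^d)=[0,1]^d$, we have $F_1=\bigcup_{i\in\I}\vp_i([0,1]^d)=\bigcup_{i\in\I}N^{-1}([0,1]^d+i)$, a finite union of connected closed cubes; hence $F_1$ is connected if and only if the \emph{cube-adjacency graph} $H_1$ (vertex set $\I$, an edge between $i\neq j$ exactly when $\vp_i([0,1]^d)\cap\vp_j([0,1]^d)\neq\varnothing$) is connected. By Hata's criterion, $F$ is connected if and only if the Hata graph $H$ (vertex set $\I$, an edge between $i\neq j$ exactly when $\vp_i(F)\cap\vp_j(F)\neq\varnothing$) is connected. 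Because $F\subset[0,1]^d$, every Hata edge is a cube-adjacency edge, so $H\subseteq H_1$; consequently $F$ connected forces $F_1$ connected, which is the easy direction. It therefore suffices to prove the reverse inclusion $H_1\subseteq H$, for then $H=H_1$ and all four conditions are equivalent.

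The step I would isolate is the claim that \emph{every vertex of $[0,1]^d$ lies in $F$} (indeed in $O_k(F)$ for every $k$), and this is where the hypothesis $\{0,N-1\}^d\subset\I$ enters. I would work inside the Cantor type graph-directed attractor $(O_1(F),\ldots,O_{d!2^d}(F))$ of Lemma~\ref{lem:cantortypeatt} and apply Corollary~\ref{cor:01}: for a fixed vertex $\alpha$ of $[0,1]^d$ it is enough to show that $G_{\{\alpha\}}$ admits arbitrarily long walks from every vertex $k$, for which it suffices that every vertex of $G_{\{\alpha\}}$ has out-degree at least one. Recall that an edge of the graph-directed system leaving $k$ and indexed by $i\in\I$ corresponds (via~\eqref{eq:ocircvpi}) to the cube $N^{-1}([0,1]^d+t_{k,i})$ with $t_{k,i}=A_{O_k}(i-(N-1)\otimes_d\frac12)+(N-1)\otimes_d\frac12$; such an edge appears in $G_{\{\alpha\}}$ exactly when $\alpha\in N^{-1}([0,1]^d+t_{k,i})$, which a direct coordinatewise check shows is equivalent to $t_{k,i}=i_\alpha$, where $i_\alpha\in\{0,N-1\}^d$ is the position of the unique grid sub-cube $N^{-1}([0,1]^d+i_\alpha)$ containing $\alpha$.

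The crux of this step, which I expect to be the main obstacle to write cleanly, is the observation that as $i$ runs over the corner positions $\{0,N-1\}^d\subset\I$, the value $t_{k,i}$ runs bijectively over $\{0,N-1\}^d$. This holds because $A_{O_k}$ is a signed permutation matrix and therefore preserves the centered corner set $\{-\frac{N-1}{2},\frac{N-1}{2}\}^d$, so the affine map $i\mapsto t_{k,i}$ permutes $\{0,N-1\}^d$. Since $i_\alpha\in\{0,N-1\}^d$, there is some $i\in\{0,N-1\}^d\subset\I$ with $t_{k,i}=i_\alpha$, yielding the desired outgoing edge at $k$. Finiteness of $G_{\{\alpha\}}$ then produces arbitrarily long (indeed infinite) walks from every vertex, and Corollary~\ref{cor:01} gives $\alpha\in O_k(F)$ for all $k$; taking $k$ with $O_k=\id$ gives $\alpha\in F$.

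Finally I would establish $H_1\subseteq H$. If $\vp_i([0,1]^d)$ and $\vp_j([0,1]^d)$ touch, their intersection is a common (possibly degenerate) face whose corners are lattice points that are simultaneously vertices of both cubes; fixing such a shared vertex $w$, one has $w=\vp_i(a)=\vp_j(b)$ with $a=O_i^{-1}(Nw-i)$ and $b=O_j^{-1}(Nw-j)$ both vertices of $[0,1]^d$. By the key step $a,b\in F$, so $w\in\vp_i(F)\cap\vp_j(F)$ and $\{i,j\}$ is a Hata edge. Hence $H_1=H$, and the chain ``$F$ connected $\iff H$ connected $\iff H_1$ connected $\iff F_1$ connected'' completes the proof.
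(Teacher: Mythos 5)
Your proof is correct, and its overall skeleton matches the paper's: both arguments reduce everything to the single claim that every vertex of $[0,1]^d$ belongs to $F$ (equivalently, to every $O_k(F)$), and then convert a chain of touching cubes in $F_1$ into a chain of intersecting pieces $\vp_{i_k}(F)$ via shared cube vertices, finishing with Hata's criterion. The one genuine difference is how that key claim is established. You run it through the machinery of Section~2.1: you work in the graph-directed attractor $(O_k(F))_k$ of Lemma~\ref{lem:cantortypeatt}, observe that $A_{O_k}$ is a signed permutation matrix so that $i\mapsto t_{k,i}$ permutes the corner set $\{0,N-1\}^d$, conclude that every vertex of $G_{\{\alpha\}}$ has positive out-degree, and invoke Corollary~\ref{cor:01}. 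The paper explicitly notes that this route ``of course works'' but instead gives an elementary induction showing each vertex of $[0,1]^d$ lies in every $F_n$, using only that the digit $(N-1)\alpha$ is in $\I$. Your version costs a little more setup but yields the slightly stronger statement $\alpha\in O_k(F)$ for all $k$ directly (which is what the final adjacency step actually uses, since one must pull the shared vertex back through $O_{i_k}^{-1}$), and it sidesteps the small imprecision in the paper's display $\alpha=\vp_{(N-1)\alpha}(\alpha)$, which tacitly assumes $O_{(N-1)\alpha}$ fixes $\alpha$; the paper's induction is nonetheless valid because the hypothesis covers all vertices, so $O_{(N-1)\alpha}^{-1}(\alpha)\in F_m$ regardless.
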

\begin{proof}
    The ``$\Longrightarrow$'' part is immediate. For the ``$\Longleftarrow$'' part, we will first show that every vertex of $[0,1]^d$ is an element of $F$.

    To this end, constructing the graphs introduced in Section 2.1 of course works, but we will use a simpler method here. We will show by induction that every vertex of $[0,1]^d$ is an element of $F_n$ for all $n\geq 0$ and hence belongs to $F$. When $n=0$, $F_0=[0,1]^d$ so there is nothing to prove. Suppose that $F_n$ contains all vertices of $[0,1]^d$ for $0\leq n\leq m$. Fix any vertex $\alpha=(a_1,\ldots,a_d)$ of $[0,1]^d$. Note that
    \[
      \alpha = \frac{\alpha+(N-1)\alpha}{N}=\vp_{(N-1)\alpha}(O_{(N-1)\alpha}^{-1}\alpha),
    \]
    and by inductive assumption, $O_{(N-1)\alpha}^{-1}\alpha\in F_m$. Thus
    \[
        \alpha =\vp_{(N-1)\alpha}(O_{(N-1)\alpha}^{-1}\alpha) \in \vp_{(N-1)\alpha}(F_m) \subset \bigcup_{i\in\I}\vp_i(F_m) = F_{m+1},
    \]
    which completes the induction (note that $\alpha$ might not be the fixed point of $\varphi_{(N-1)\alpha}$).
    

    Since $F_1=\bigcup_{i\in\I}\vp_i([0,1]^d)$ is connected, for any given pair of digits $i,j\in\I$, there is a sequence $\{i_k\}_{k=1}^m\subset\I$ such that $i_1=i$, $i_m=j$ and
    \[
        \vp_{i_k}([0,1]^d) \cap \vp_{i_{k+1}}([0,1]^d) \neq\varnothing, \quad 1\leq k\leq m-1.
    \]
    This means that $\vp_{i_k}([0,1]^d)$ and $\vp_{i_{k+1}}([0,1]^d)$ are adjacent cubes. Combining this with the fact that $F$ contains every vertex of $[0,1]^d$, $\vp_{i_k}(F)\cap\vp_{i_{k+1}}(F)\neq\varnothing$ for $1\leq k\leq m-1$. So $F$ is connected (again by Hata's criterion).
\end{proof}

\section{Further remarks}

There are also general settings in which the previous approach to the intersection problem works. The key requirement is that given any $k$, the intersection of any pair of level-$k$ cubes must be their largest common face. Our strategy remains applicable as long as this holds.
For example, the assumption that all similarities in the IFS have the same contraction ratio $N^{-1}$ can be relaxed.

\begin{example}
    Let $(K_1,\ldots,K_n)$ be a graph-directed attractor in $\R$ associated with $G=G(\V,\E)$ and $\Phi=\{\vp_e: e\in\E\}$, where
    \begin{enumerate}
        \item for each $e\in\E$, $\vp_e$ is a contracting map on $\R$ of the form $\vp_e(x)=r_ex+t_e$, where $0<r_e<1$ and $0\leq t_e\leq 1-r_e$;
        \item for every pair of $e,e'\in\E$, the two intervals $\vp_e([0,1])$ and $\vp_{e'}([0,1])$ are either the same or adjacent.
    \end{enumerate}
    In this setting, if some pair of level-$k$ intervals are not disjoint then they are either the same or adjacent. So one can follow the arguments in this paper to see whether $K_i\cap K_j\neq\varnothing$ for $1\leq i,j\leq n$.
\end{example}

Putting some restrictions on the symmetries, we are also able to determine the connectedness of self-affine sponge-like sets.

\begin{example}
    Let $n,m\geq 2$ be integers and let $0<a_1,\ldots,a_n<1$, $0<b_1,\ldots,b_m<1$ be such that $\sum_{p=1}^n a_p=\sum_{q=1}^m b_q=1$. Let $\D\subset\{0,1,\ldots,n-1\}\times\{0,1,\ldots,m-1\}$ be a non-empty digit set. For each $(i,j)\in\D$, set
    \[
        \psi_{(i,j)}: \Big(\begin{array}{c} x \\ y \end{array}\Big) \mapsto O_{(i,j)}\Big(\begin{array}{cc} a_{i+1} & 0 \\ 0 & b_{j+1} \end{array}\Big)\Big(\begin{array}{c} x \\ y \end{array}\Big) + \Big(\begin{array}{c} \sum_{p=1}^i a_p \\ \sum_{q=1}^j b_q \end{array}\Big),
    \]
    where $O_{(i,j)}$ is an element of the collection of rotations of $0^\circ, 180^\circ$ around the point $(1/2,1/2)$ and flips along the lines $x=1/2$ or $y=1/2$. When $O_{(i,j)}=\id$ for all $(i,j)\in\D$, the attractor associated with the IFS $\{\psi_{(i,j)}:(i,j)\in\D\}$ is called a Bara\'nski carpet (see~\cite{Bar07}); if we further have $a_p\equiv n^{-1}$ and $b_q\equiv m^{-1}$ then the attractor is called a Bedford-McMullen carpet (see~\cite{Fra20}). In this case, it is easy to see that the intersection of every pair of level-$k$ rectangles is their largest common face and our arguments work in this setting.
\end{example}

\bigskip

\noindent{\bf Acknowledgements.}
The research of Ruan is partially supported by NSFC grant 12371089, ZJNSF grant LY22A010023 and the Fundamental Research Funds for the Central Universities of China grant 2021FZZX001-01 and 226-2024-00136. The research of Xiao is partially supported by the General Research Funds (CUHK14301017, CUHK14301218) from the Hong Kong Research Grant Council. We are grateful to Professor Kenneth Falconer for suggesting the connectedness question of sponge-like sets and his helpful comments. We also thank the anonymous referee reading the paper very carefully, and for providing a large number of valuable suggestions.

\small
\bibliographystyle{amsplain}

\end{document}